\documentclass[matprg]{svjour3}
\usepackage{graphics,graphicx,epsf,subfigure,epstopdf}
\usepackage{amsmath,amsxtra,amsfonts,amscd,amssymb,bm}
\usepackage{algorithm}
\usepackage{algpseudocode}
\usepackage[top=2.5cm,bottom=2.5cm,right=2.5cm,left=2.5cm]{geometry}
\usepackage[mathscr]{eucal}
\usepackage{color}
\numberwithin{equation}{section}

\usepackage{hyperref}

\DeclareMathOperator*{\argmin}{arg\,min}

\def\eqnok#1{(\ref{#1})}

\newcommand{\tsum}{\textstyle\sum}

\newcommand{\beq}{\begin{equation}}
\newcommand{\eeq}{\end{equation}}
\newcommand{\beqa}{\begin{eqnarray}}
\newcommand{\eeqa}{\end{eqnarray}}
\newcommand{\beqas}{\begin{eqnarray*}}
\newcommand{\eeqas}{\end{eqnarray*}}

\newcommand{\nn}{\nonumber}

\def\bS{{\mathbb{S}}}
\def\bA{{\mathbb{A}}}

\def\vgap{\vspace*{.1in}}

\global\long\def\R{\mathbb{R}}%
\global\long\def\E{\mathbb{E}}%
\global\long\def\P{\mathbb{P}}%

\newcommand{\cO}{\mathcal{O}}

\newcommand{\tr}{\intercal}

\title{Value Mirror Descent for Reinforcement Learning
\thanks{This research was partially supported by Office of Naval Research grant N000142412654.}}

\author{Zhichao Jia \quad Guanghui Lan}

\institute{Zhichao Jia \\ H. Milton Stewart School of Industrial and Systems Engineering, Georgia Institute of Technology, Atlanta, GA, 30332. \\ Email: {\tt zjia75@gatech.edu} \\ \\ Guanghui Lan \\ H. Milton Stewart School of Industrial and Systems Engineering, Georgia Institute of Technology, Atlanta, GA, 30332. \\ Email: {\tt george.lan@isye.gatech.edu}}

\date{}

\begin{document}

\maketitle

\begin{abstract}
Value iteration-type methods have been extensively studied for computing a nearly optimal value function in reinforcement learning (RL). Under a generative sampling model, these methods can achieve sharper sample complexity than policy optimization approaches, particularly in their dependence on the discount factor. In practice, they are often employed for offline training or in simulated environments. In this paper, we consider discounted Markov decision processes with state space $\bS$, action space $\bA$, discount factor $\gamma \in (0,1)$ and costs in $[0,1]$. We introduce a novel value optimization method, termed value mirror descent (VMD), which integrates mirror descent from convex optimization into the classical value iteration framework. In the deterministic setting with known transition kernels, we show that VMD converges linearly.
For the stochastic setting with a generative model, we develop a stochastic variant, SVMD, which incorporates variance reduction commonly used in stochastic value iteration-type methods. For RL problems with general convex regularizers, SVMD attains a near-optimal sample complexity of $\Tilde{\cO}(|\bS||\bA|(1-\gamma)^{-3}\epsilon^{-2})$. Moreover, we establish that the Bregman divergence between the generated and optimal policies remains bounded throughout the iterations. This property is absent in existing stochastic value iteration-type methods but is important for enabling effective online (continual) learning following offline training.
Under a strongly convex regularizer, SVMD achieves sample complexity of $\Tilde{\cO}(|\bS||\bA|(1-\gamma)^{-5}\epsilon^{-1})$, improving performance in the high-accuracy regime. Furthermore, we prove convergence of the generated policy to the optimal policy. Overall, the proposed method, its analysis, and the resulting guarantees, including improved sample complexity under strong convexity and bounded or convergent Bregman divergence, constitute new contributions to the RL and optimization literature.
\end{abstract}
\vgap

{\noindent\keywordname reinforcement learning $\cdot$ value iteration-type methods $\cdot$ value mirror descent}
\vgap

{\noindent\subclassname 90C30 $\cdot$ 90C39 $\cdot$ 90C40 $\cdot$ 68Q25}

\section{Introduction}

A Markov Decision Process (MDP) is a well-studied mathematical framework for sequential decision-making under uncertainty. At each time step, an agent observes the current state of the environment and selects an action according to a policy. A cost is then incurred for the resulting state-action pair, and the environment transitions to a new state according to a probability transition kernel determined by the chosen action. The goal of solving an MDP is to find an optimal policy that minimizes the expected discounted cumulative cost over an infinite horizon (or a prescribed finite horizon). Such a policy specifies the best action to take at each state.

Reinforcement learning (RL) problems are MDPs with an unknown probability transition kernel $\P$. Given a state-action pair, instead of having direct access to the transition probabilities, one can only generate samples of the next state according to $\P$. Classical methods for solving RL problems can be broadly divided into three categories: dynamic optimization, linear optimization and nonlinear optimization methods. Dynamic optimization methods are typically categorized into value iteration-type and policy iteration-type methods. Value iteration-type methods include the classic value iteration~\cite{bellman1957markovian,puterman2014markov,shapley1953stochastic} and Q-learning~\cite{watkins1992q}, while policy-based methods include policy iteration~\cite{howard1960dynamic,puterman2014markov}. Their stochastic variants have been extensively developed for RL, including model-based empirical Q-value iteration~\cite{agarwal2020model,gheshlaghi2013minimax}, randomized value iteration~\cite{sidford2023variance}, variance-reduced Q-value iteration~\cite{sidford2018near}, and approximate policy iteration~\cite{bertsekas2011approximate,lagoudakis2003least}. In addition to dynamic optimization methods, linear optimization methods form another important class of approaches for RL, based on linear programming formulations of MDPs; examples include stochastic primal-dual methods~\cite{chen2016stochastic,wang2017primal} for stochastic saddle point formulations of RL problems. More recently, nonlinear optimization techniques have been widely incorporated into RL, such as trust region policy optimization methods~\cite{schulman2015trust}, proximal policy optimization methods~\cite{schulman2017proximal}, and stochastic policy mirror descent first studied in ~\cite{lan2023policy} and later in \cite{ju2022policy,ju2024strongly,li2025stochastic,li2025policy,li2022first,zhan2023policy}, among many others. These methods are often referred to as policy optimization methods.

Both policy optimization methods and stochastic value iteration–type methods are widely used in reinforcement learning, representing two fundamentally different algorithmic paradigms.
Policy optimization methods offer several significant advantages. First, they are flexible enough to accommodate different sampling regimes, including generative models and online Markovian sampling. Moreover, they can naturally handle, or even benefit from, the presence of (strongly) convex regularizers in RL models. Finally, policy optimization methods are often easier to integrate with on-policy learning~\cite{li2025policy,ju2025auto}, which facilitates action and state exploration in online settings.
In contrast, stochastic value iteration–type methods offer several distinct advantages. First, under a generative model, they enjoy superior sample complexity. For a discounted Markov decision process (DMDP) with state space $\bS$, action space $\bA$, discount factor $\gamma\in(0,1)$, and costs in $[0,1]$, stochastic value iteration methods in~\cite{agarwal2020model,sidford2018near} attain an $\epsilon$-optimal policy with sample complexity $\Tilde{\cO}(|\bS||\bA|(1-\gamma)^{-3}\epsilon^{-2})$, matching the known lower bound in~\cite{gheshlaghi2013minimax} up to logarithmic factors, and improving upon policy optimization methods~\cite{lan2023policy,ju2022policy} in terms of the dependence on $1-\gamma$. Therefore, when a generative model is available, value iteration–type methods admit stronger convergence guarantees. Moreover, they are often simpler to implement, as they do not require repeated policy evaluation steps, which can be computationally involved.

Despite the superior sample complexity of stochastic value iteration–type methods, they still face several important challenges. First, although these methods have been extended to various regularized RL settings~\cite{geist2019theory,haarnoja2017reinforcement}, strong convergence guarantees are still lacking. In particular, it appears difficult for stochastic value iteration-type methods to achieve an $\cO(\epsilon^{-1})$ sample complexity when solving strongly convex regularized RL problems. This limitation highlights a gap compared to policy optimization methods, for which sample complexity guarantees have been more thoroughly developed in both unregularized and regularized settings~\cite{lan2023policy,ju2022policy,ju2024strongly}.
Second, the approximate optimal policies produced by value iteration–type methods can be unstable. Specifically, a policy may achieve a near-optimal value while remaining far from the optimal policy in terms of Bregman divergence. In contrast, access to a stable near-optimal policy is often desirable in practice, for example, in fine-tuning a pre-trained agent. If the pre-trained policy remains close to the optimal one under a suitable Bregman divergence, then the probabilities of selecting optimal actions remain positive, thereby providing a favorable warm start for continual training using policy optimization methods in online and on-policy settings~\cite{li2025policy,ju2025auto}. Consequently, policies produced by existing value iteration–type methods are not always well suited for continual online adaptation. 

\subsection{Our Contributions}

Our contributions in this paper are summarized from the following several aspects.

First, we introduce a novel value optimization framework, termed value mirror descent (VMD), which integrates mirror descent from convex optimization into the classical value iteration paradigm. More specifically, our method alternates between a prox-mapping step that computes an improved policy and a value update based on this policy. The prox-mapping step can be viewed as a natural generalization of the classical value iteration update, incorporating appropriately chosen stepsizes and Bregman divergences.
We show that VMD achieves a linear convergence rate, attaining an $\epsilon$-optimal policy within $\cO(\log_2[(1-\gamma)^{-1}\epsilon^{-1}])$ epochs and $\cO((1-\gamma)^{-1})$ iterations per epoch. Consequently, the overall iteration complexity, measured by the number of prox-mapping steps, is $\Tilde{\cO}((1-\gamma)^{-1})$, matching the optimal complexity of classical value iteration while arising from a fundamentally different optimization-based perspective.

Second, we extend value mirror descent to a stochastic setting for solving RL problems with general convex regularizers, where the transition kernel is unknown but a generative model is available. Our method incorporates variance reduction technique in the sampling procedure, and establishes explicit relations between the computed value vectors and the policy value vectors. As a result, we obtain new convergence guarantees for our method and show that the overall sample complexity for computing an $\epsilon$-optimal policy with high probability is $\Tilde{\cO}(|\bS||\bA|(1-\gamma)^{-3}\epsilon^{-2})$. This matches the sample complexity of variance-reduced Q-value iteration method in~\cite{sidford2018near}, and is optimal up to logarithmic factors in view of the lower bound in~\cite{gheshlaghi2013minimax}. In addition, our method guarantees that the Bregman divergence between the output $\epsilon$-optimal policy and the optimal policy is bounded by $\cO((1-\gamma)^{-2})$, which is of the same order as that obtained by stochastic policy mirror descent method (see~\cite{li2025policy}). This stability property can enable a warm-start of continual online learning and fine-tuning using on-policy-compatible RL methods~\cite{ju2025auto,li2025policy}, after an offline pre-traning phase by our method with nearly optimal sample complexity.

Third, we establish convergence guarantees for stochastic value mirror descent in RL problems with strongly convex regularizers. Our analysis introduces virtual sequences of value vectors that serve as proxies of the computed value iterates, and exploits the strong convexity underlying the regularizer and Bregman divergence. Based on these techniques, we show that our method requires $\Tilde{\cO}(|\bS||\bA|(1-\gamma)^{-5}\epsilon^{-1})$ samples from a generative model to compute an $\epsilon$-optimal policy with high probability. This improves over the variance-reduced Q-value iteration method in~\cite{sidford2018near} when $\epsilon<\cO((1-\gamma)^2)$. To the best of our knowledge, this is the first value-based methods to achieve an $\Tilde{\cO}(\epsilon^{-1})$ dependence, rather than $\cO(\epsilon^{-2})$, in sample complexity for regularized RL problems. Moreover, the Bregman divergence between the output $\epsilon$-optimal policy and the optimal policy is bounded by $\cO((1-\gamma)^{-1}\epsilon)$, indicates the convergence of the generated policy to optimal policy.

The rest of this paper is organized as follows. In Section~\ref{section:background}, we introduce background on DMDPs and the classical value iteration method. Section~\ref{section:deter} presents the value mirror descent method for solving DMDPs and establishes its convergence results. Section~\ref{section:stoc1} and Section~\ref{section:stoc2} develop the stochastic value mirror descent method and its convergence guarantees for RL problems with general convex regularizers and strongly convex regularizers, respectively. Section~\ref{section:conclusion} concludes the paper. Several technical proofs are deferred to the appendix.

\subsection{Notations}

For a Hilbert space, let $\|\cdot\|$ be the norm and $\|\cdot\|_*$ be its dual norm. For any matrix $A\in\R^{m\times n}$, define $\|A\|_{\max}=\max_{i=1,...,m,j=1,...,n}|A_{i,j}|$. For any finite set $S\subseteq\R^n$, we use $|S|$ to denote its cardinality. We define the simplex in $\R^n$ by $\Delta_n:=\{x\in\R^n\mid 0\leq x_i\leq 1 \ \forall i=1,\cdots,n,\tsum_{i=1}^nx_i=1\}$. For any function $f: X\to\R$, the Clarke subdifferential of $f$ at $x\in X$ is defined as $\partial f(x)=\mathrm{co}\{\lim_{i\to\infty}\nabla f(x_i): f(x) \ \text{is differentiable at any} \ x_i\in X, \text{and} \ x_i\to x\}$, where $\mathrm{co}(\cdot)$ denotes the convex hull. We write $f'(x)\in\partial f(x)$ for an arbitrary Clarke subgradient at $x$. A function $f:X\to\R$ is said to be $L$-Lipschitz continuous if $|f(x_1)-f(x_2)|\leq L\|x_1-x_2\|$ holds for any $x_1,x_2\in X$,
or equivalently, $\|f'(x)\|_*\leq L$ for any $x\in X$. We say that a continuous function $f:X\to\R$ is $\mu$-strongly convex if $f(x_2)\geq f(x_1)+\langle f'(x_1),x_2-x_1\rangle+(\mu/2)\|x_1-x_2\|^2$ is satisfied for any $x_1,x_2\in X$ and $f'(x_1)\in\partial f(x_1)$. Let $\omega:X\to\R$ be a continuously differentiable and $1$-strongly convex distance generating function. Then, for any $x_1,x_2\in X$, the associated Bregman divergence (or prox-function) is defined by $D(x_1,x_2):=\omega(x_2)-\omega(x_1)-\langle\nabla\omega(x_1),x_2-x_1\rangle$, and it satisfies $D(x_1,x_2)\geq\|x_1-x_2\|^2/2$. For any $x\in\R^n$, $|x|$ denotes the elementwise absolute value of $x$.
\section{Background}
\label{section:background}

In this section, we present preliminaries on DMDPs considered in this paper, as well as the classical value iteration method underlying our algorithm.

\subsection{Discounted Markov Decision Processes}

A DMDP can be described by a tuple $(\bS,\bA,\P,c,\gamma)$, where $\bS$ is a finite state space, $\bA$ is a finite action space, $\P\in\R^{|\bS|\times|\bA|\times|\bS|}$ represents the probability transition kernel, $c\in\R^{|\bS|\times|\bA|}$ denotes the cost function, and $\gamma\in (0,1)$ is the discount factor. When action $a\in\bA$ is taken at state $s\in\bS$, the system transitions to the next state $s'\in\bS$ with probability $\P(s'|s,a)$, and incurs cost $c(s,a)$. Throughout the paper, we assume that $c(s,a)\in[0,1]$ for all $(s,a)\in\bS\times\bA$.

For any $(s,a)\in\bS\times\bA$ and $s'\in\bS$, we write $\P_{s,a}(s')=\P(s'|s,a)$, so that $\P_{s,a}\in\R^{|\bS|}$ denotes the transition probability vector associated with $(s,a)$. For any vector $V\in\R^{|\bS|}$, we let $\P V\in\R^{|\bS|\times |\bA|}$ be defined by $(\P V)_{s,a}=\P_{s,a}^{\tr}V$. Furthermore, for any state $s\in\bS$, we define $\P_s V\in\R^{|\bA|}$ by $(\P_s V)(a)=(\P V)_{s,a}$.

 We denote $\Delta_{|\bA|}$ by $\Delta_{\bA}$ (and similarly $\Delta_{|\bS|}$ by $\Delta_{\bS}$). A randomized stationary policy is a mapping $\pi(\cdot|s):\bS\to\Delta_{\bA}$, which assigns to each state $s\in\bS$ a distribution over all actions $a\in\bA$. Accordingly, $\pi(a|s)$ represents the probability of taking action $a$ at state $s$. We let $\Pi$ denote the Cartesian product of $|\bS|$ simplices, representing the set of all stationary policies. For any policy $\pi\in\Pi$, we define $c_{\pi}\in\R^{|\bS|}$ and $\P_{\pi}\in\R^{|\bS|\times |\bS|}$ as the cost vector and probability transition matrix induced by $\pi$, respectively, where $c_{\pi}(s)=\tsum_{a\in\bA}\pi(a|s)c(s,a)$ for all $s\in\bS$, and $\P_{\pi}(s,s')=\tsum_{a\in\bA}\pi(a|s)\P(s'|s,a)$ for all $s,s'\in\bS$. We also use $\mathbf{1}_{\bS}\in\R^{|\bS|},\mathbf{1}_{\bA}\in\R^{|\bA|},\mathbf{0}_{\bS}\in\R^{|\bS|}$ and $\mathbf{0}_{\bA}\in\R^{|\bA|}$ to denote the all-ones and all-zeros vectors of the corresponding dimensions.

The goal of a (regularized) DMDP is to attain an optimal policy $\pi\in\Pi$ that minimizes the expectation of the discounted cumulative cost, which is defined as
\begin{align}
    V^{\pi}(s):=\E\big[\tsum_{t=0}^{\infty}\gamma^t[c(s_t,a_t)+h(\pi(\cdot|s_t))]\mid s_0=s,a_t\sim\pi(\cdot|s_t),s_{t+1}\sim\P(\cdot|s_t,a_t)\big],
    \label{def:Vpi}
\end{align}
where $V^{\pi}:\bS\to\R$ is the state-value function associated with $\pi$. We assume that the regularizer $h:\Delta_{\bA}\to[0,\Bar{h}]$ in~\eqref{def:Vpi} is continuous and convex, and define the vector $h(\pi)\in\R^{|\bS|}$ by $(h(\pi))(s)=h(\pi(\cdot|s))$. Moreover, since $c(s,a)\in[0,1]$ for all $(s,a)\in\bS\times\bA$, it follows that $\|V^{\pi}\|_\infty\in [0,(1+\Bar{h})/(1-\gamma)]$.

For any $s\in\bS$ and policies $\pi$ and $\pi'$, we let $D^{\pi}_{\pi'}(s)=D(\pi'(\cdot|s),\pi(\cdot|s))$ denote the corresponding Bregman divergence. Common examples of Bregman divergences include the squared Euclidean distance $D_{\pi}^{\pi'}(s)=\|\pi'(\cdot|s)-\pi(\cdot|s)\|_2^2/2$ associated with the norm $\|\cdot\|=\|\cdot\|_2$ and the distance generating function $\omega_{Euc}(\cdot)=\|\cdot\|_2^2/2$, the Kullback-Leibler (KL) divergence $D_{\pi}^{\pi'}(s)=\tsum_{a\in\bA}\pi'(a|s)\ln[\pi'(a|s)/\pi(a|s)]$ associated with the norm $\|\cdot\|=\|\cdot\|_1$ and the distance generating function $\omega_{KL}(\pi(\cdot|s))=\tsum_{a\in\bA}\pi(a|s)\log(\pi(a|s))$, and the Tsallis divergence $D_{\pi}^{\pi'}(s)=[p(1-p)]^{-1}\tsum_{a\in\bA}[-\pi'(a|s)^p+(1-p)\pi(a|s)^p+p\pi(a|s)^{p-1}\pi'(a|s)]$ for any $p\in(0,1)$ associated with the norm $\|\cdot\|=\|\cdot\|_1$ and the distance generating function $\omega_{Tsa}(\pi(\cdot|s))=-[p(1-p)]^{-1}\tsum_{a\in\bA} \pi(a|s)^p$.

By the convexity of $h$, there exists some $\mu\geq 0$ such that
\begin{align}
    h(\pi(\cdot|s))-\big[h(\pi'(\cdot|s))+\big\langle h'(\pi'(\cdot|s)),\pi(\cdot|s)-\pi'(\cdot|s)\big\rangle\big]\geq\mu D^{\pi}_{\pi'}(s).
    \label{hconvex}
\end{align}
We say that $h$ is a $\mu$-strongly convex regularizer if~\eqref{hconvex} holds for some $\mu>0$.

Moreover, we call $\pi^*$ an optimal policy if
\begin{align}
    V^{\pi^*}(s)\leq V^{\pi}(s) \qquad \forall s\in\bS,\forall \pi(\cdot|s)\in\Delta_{\bA}.
    \label{def:pistar}
\end{align}
The existence of $\pi^*$ has been thoroughly studied in~\cite{puterman2014markov}. In our setting, $\pi^*$ exists since $\bS$ and $\bA$ are finite sets, and $|c(s,a)|<\infty$ holds for all $(s,a)\in\bS\times\bA$.

Next, we introduce several additional definitions that will be used throughout the paper. We first define the operator $\Gamma:\R^{|\bS|}\to\R^{|\bS|}$ by \[
(\Gamma V)(s):=\min_{\pi(\cdot|s)\in\Delta_{\bA}}[\langle c(s,\cdot)+\gamma\P_sV,\pi(\cdot|s)\rangle+h(\pi(\cdot|s))]\] 
for all $s\in\bS$ and $v\in\R^{|\bS|}$. Next, for any policy $\pi\in\Pi$, we define the linear operator $\Gamma_{\pi}:\R^{|\bS|}\to\R^{|\bS|}$ by 
\[
\Gamma_{\pi}V:=c_{\pi}+\gamma\P_{\pi}V+h(\pi)\in\R^{|\bS|}\] 
for all $V\in\R^{|\bS|}$, where $(\Gamma_{\pi}V)(s)=\langle c(s,\cdot)+\gamma\P_sV,\pi(\cdot|s)\rangle+h(\pi(\cdot|s))$ for all $s\in\bS$. It is easy to verify that $\Gamma_{\pi}$ is a contraction mapping, i.e., $\|\Gamma_{\pi} V_1-\Gamma_{\pi} V_2\|_{\infty}\leq\gamma\|V_1-V_2\|_{\infty}$ for any $V_1,V_2\in\R^{|\bS|}$. Moreover, $\Gamma_{\pi}$ is monotone, i.e., $\Gamma_{\pi}V_1\geq\Gamma_{\pi}V_2$ when $V_1\geq V_2$. It is also useful to note that $(I-\gamma\P_{\pi})^{-1}$ is monotone, since $(I-\gamma\P_{\pi})^{-1}V=\tsum_{i=0}^{\infty}(\gamma\P_{\pi})^iV$ for any $V\in\R^{|\bS|}$. In addition, using the same expansion, we can see that $(I-\gamma\P_{\pi})^{-1}V\geq V$ for any $V\geq \mathbf{0}_{\bS}$.

Finally, we let $V^*$ denote the unique fixed point of $V=\Gamma V$. Note that for any $\pi\in\Pi$, $V^{\pi}$ defined in~\eqref{def:Vpi} is the unique fixed point of $V=\Gamma_{\pi}V$. In particular, we have $V^{\pi}\geq V^{\pi^*}=V^*$ for any $\pi\in\Pi$. Finally, for any $\epsilon>0$, we say that a policy $\pi$ and its associated value $V^{\pi}$ are $\epsilon$-optimal if $\|V^{\pi}-V^*\|_{\infty}\leq\epsilon$.

\subsection{Value Iteration}

Value iteration (see~\cite{puterman2014markov}) is the most classical and simplest method for solving DMDPs. Given any initial value $V_0\in\R^{|\bS|}$, the update in each iteration $k=0,1,...$ is
\begin{align}
    V_{k+1}=\Gamma V_{k}.
    \label{valueiteration}
\end{align}
Since $\|V_{k+1}-V^*\|_{\infty}=\|\Gamma V_k-\Gamma V^*\|_{\infty}\leq\gamma\|V_k-V^*\|_{\infty}$, value iteration converges linearly to $V^*$. According to~\cite{puterman2014markov}, the method terminates with an $\epsilon$-optimal policy once $\|V_{k+1}-V_k\|_{\infty}\leq\epsilon (1-\gamma)/(2\gamma)$ is attained. In this paper, we develop a new method for solving DMDPs based on the value iteration framework.
\section{Value Mirror Descent}
\label{section:deter}

Mirror descent steps have been widely used to solve DMDPs efficiently, for example, in policy mirror descent methods~\cite{lan2023policy}. Motivated by this line of work, we develop a new value-based method that incorporates the idea of mirror descent. In this section, we present the basic algorithmic scheme of value mirror descent (VMD) and establish its convergence properties.

\subsection{Algorithmic Scheme of Value Mirror Descent}

In order to obtain a policy $\pi$ that minimizes $\Gamma_{\pi}V_k$ in a less greedy manner than the value iteration step~\eqref{valueiteration}, we introduce the following prox-mapping step
\begin{align}
    \pi_{k+1}(\cdot|s)=\argmin_{\pi(\cdot|s)\in\Delta_{\bA}}\left\{\eta_k\left[\langle c(s,\cdot)+\gamma\P_s V_k,\pi(\cdot|s)\rangle+h(\pi(\cdot|s))\right]+D^{\pi}_{\pi_k}(s)\right\} \qquad \forall s\in\bS.
    \label{VMDstep1}
\end{align}
Here, the stepsize $\eta_k$ and the Bregman divergence $D^{\pi}_{\pi_k}$ are used to control how far the next iterate $\pi_{k+1}$ moves from the current $\pi_k$. After computing $\pi_{k+1}$, we update the value vector from $V_k$ to $V_{k+1}$ using $\pi_{k+1}$, i.e.,
\begin{align}
    V_{k+1}=\Gamma_{\pi_{k+1}}V_k.
    \label{VMDstep2}
\end{align}
The updates in~\eqref{VMDstep1} and~\eqref{VMDstep2} together lead to the value mirror descent (VMD) method below.

\begin{algorithm}[H]
\caption{Value Mirror Descent (VMD) for DMDPs}
\label{algo:VMD1}
\begin{algorithmic}
\State{\textbf{Input:} $\hat{\pi}_0(\cdot|s)=\mathbf{1}_{\bA}/|\bA| \ \forall s\in\bS$, $\hat{V}_0=(1+\Bar{h})/(1-\gamma)\cdot\mathbf{1}_{\bS}\in\R^{|\bS|}$, $K>0$, $T>0$, $\{\eta_{k,t}\}_{t=0}^{T-1}$ for $k=0,1,...,K-1$.}
\For{$k=0,1,...,K-1$}
\State{Set $V_0=\hat{V}_k$, $\pi_0=\hat{\pi}_k$ and $\{\eta_t\}_{t=0}^{T-1}=\{\eta_{k,t}\}_{t=0}^{T-1}$.}
\For{$t=0,1,...,T-1$}
\State{For all $s\in\bS$, update}
\State{\begin{align}
&\pi_{t+1}(\cdot|s)=\argmin_{\pi(\cdot|s)\in\Delta_{\bA}}\{\eta_t[\langle c(s,\cdot)+\gamma\P_s V_t,\pi(\cdot|s)\rangle+h(\pi(\cdot|s))]+D^{\pi}_{\pi_t}(s)\}, \label{eq1:algo1} \\
&V_{t+1}(s)=\langle c(s,\cdot)+\gamma\P_sV_t,\pi_{t+1}(\cdot|s)\rangle+h(\pi_{t+1}(\cdot|s)). \label{eq2:algo1}
\end{align}}
\EndFor
\State{Set $\hat{V}_{k+1}=V_T$ and $\hat{\pi}_{k+1}=\pi_T$.}
\EndFor
\State{\textbf{Output:} $\hat{\pi}_K$ and $\hat{V}_K$.}
\end{algorithmic}
\end{algorithm}
\vgap

In Algorithm~\ref{algo:VMD1}, we group every $T$ iterations of~\eqref{eq1:algo1} and~\eqref{eq2:algo1} into one epoch and run the method for a total of $K$ epochs. For every epoch $k\in\{0,1,...,K-1\}$, we will show that $V_t$ serves as an upper bound on $V^{\pi_t}$ for every $t=0,1,...,T$, by establishing the monotonicity property $V_t\geq\Gamma_{\pi_t}V_t$, which implies $V_t\geq\Gamma_{\pi_t}V_t\geq\Gamma_{\pi_t}^2V_t\geq\cdots\geq V^{\pi_t}$. As a consequence, the relation $\hat{V}_k\geq V^{\hat{\pi}_k}$ is preserved for all epochs $k=0,1,...,K-1$.

Moreover, for any epoch $k\in\{0,1,...,K-1\}$, with $T\in\cO((1-\gamma)^{-1})$, we will show that $\|\hat{V}_{k+1}-V^*\|_\infty\leq\|\hat{V}_k-V^*\|_\infty/2$. This yields a linear convergence rate of Algorithm~\ref{algo:VMD1} for computing an $\epsilon$-optimal policy $\hat{\pi}_K$, by combining the linear convergence of $\|\hat{V}_k-V^*\|_\infty$ with the relation $\hat{V}_K\geq V^{\hat{\pi}_K}$.

To establish the convergence of Algorithm~\ref{algo:VMD1}, we first assume that $\|D_{\hat{\pi}_0}^{\pi}\|_\infty\leq D_0$ holds for any policy $\pi$. Since $\pi_0(\cdot|s)=\mathbf{1}_{\bA}/|\bA|$ for any $s\in\bS$, it is easy to verify that $D_0=1$ under the Euclidean distance, $D_0=\log|\bA|$ under the KL divergence, and $D_0=(|\bA|^{1-p}-1)/[p(1-p)]$ under the Tsallis divergence with $p\in(0,1)$. Moreover, we define
\begin{align}
    u_k:=\tfrac{1+\Bar{h}}{2^k(1-\gamma)} \quad\text{for}\quad k=0,1,...,K,
    \label{def:uk}
\end{align}
which will be utilized later in our analysis.

\begin{theorem}
    Suppose that the algorithmic parameters in Algorithm~\ref{algo:VMD1} are set to
    \begin{align}
        K=\left\lceil\log_2\left(\tfrac{1+\Bar{h}}{(1-\gamma)\epsilon}\right)\right\rceil,\quad T=\left\lceil\tfrac{4}{1-\gamma}\right\rceil \quad\text{and}\quad \eta_{k,t}=\tfrac{2^k D_0}{u_k},t=0,1,...,T-1 \quad\text{for} \quad k=0,1,...,K-1.
        \label{para1:thm1_deter}
    \end{align}
    Then $\hat{\pi}_K$ is an $\epsilon$-optimal policy, i.e., $\|V^{\hat{\pi}_K}-V^*\|_\infty\leq\epsilon$.
    \label{thm_deter}
\end{theorem}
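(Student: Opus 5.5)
We argue epoch by epoch. Within each epoch we prove two invariants: (i) the monotonicity $V_t\ge \Gamma_{\pi_t}V_t$, which gives $V_t\ge V^{\pi_t}\ge V^*$; and (ii) a contraction $\|\hat V_{k+1}-V^*\|_\infty\le u_{k+1}$, assuming $0\le \hat V_k-V^*\le u_k$. The base case holds because $\hat V_0=(1+\Bar h)/(1-\gamma)\mathbf 1_{\bS}=\Gamma_{\pi}\hat V_0+\text{(nonnegative)}$. Indeed, $\Gamma_{\pi}\hat V_0\le (1+\Bar h)+\gamma\hat V_0=\hat V_0$, so (i) holds at $t=0$ of epoch $0$, and $0\le \hat V_0-V^*\le u_0$. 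Once the induction is done, $0\le V^{\hat\pi_K}-V^*\le \hat V_K-V^*\le u_K\le \epsilon$ by the choice of $K$.

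\emph{Step 1: three-point lemma and monotonicity.} For the prox step \eqref{eq1:algo1}, the standard three-point lemma (optimality condition plus convexity of $h$, i.e. \eqref{hconvex} with $\mu\ge 0$) gives, for any $p\in\Delta_{\bA}$ and each $s$,
\[
\eta_t\big[(\Gamma_{\pi_{t+1}}V_t)(s)-(\Gamma_{p}V_t)(s)\big]+D^{\pi_{t+1}}_{\pi_t}(s)+D^{p}_{\pi_{t+1}}(s)\le D^{p}_{\pi_t}(s),
\]
where $\Gamma_p$ denotes the one-state version. Taking $p=\pi_t(\cdot|s)$ yields $V_{t+1}=\Gamma_{\pi_{t+1}}V_t\le\Gamma_{\pi_t}V_t\le V_t$. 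Monotonicity of $\Gamma_{\pi_{t+1}}$ then gives $\Gamma_{\pi_{t+1}}V_{t+1}\le\Gamma_{\pi_{t+1}}V_t=V_{t+1}$. This propagates (i) and shows that $V_t$ is nonincreasing in $t$, and the invariant carries across epochs.

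\emph{Step 2: per-epoch contraction.} Take $p=\pi^*(\cdot|s)$. Since $\Gamma_{\pi^*}V_t-V^*=\gamma\P_{\pi^*}(V_t-V^*)$, we get
\[
V_{t+1}-V^*\le \gamma\P_{\pi^*}(V_t-V^*)+\eta^{-1}\big(D^{\pi^*}_{\pi_t}-D^{\pi^*}_{\pi_{t+1}}\big).
\]
Write $\Delta_t=V_t-V^*\ge 0$. Because $\Delta_t$ is nonincreasing, summing over $t=0,\dots,T-1$ telescopes the Bregman terms:
\[
T\Delta_T\le\sum_{t}\Delta_{t+1}\le\gamma\P_{\pi^*}\sum_t\Delta_t+\eta^{-1}D^{\pi^*}_{\pi_0}\le \gamma\P_{\pi^*}\big(\Delta_0+\sum_{t}\Delta_{t+1}\big)+\eta^{-1}D^{\pi^*}_{\pi_0}.
\]
Taking sup norms gives $(1-\gamma)\,T\|\Delta_T\|_\infty\le(1-\gamma)\|\sum\Delta_{t+1}\|_\infty\le \gamma\|\Delta_0\|_\infty+\eta^{-1}\|D^{\pi^*}_{\pi_0}\|_\infty$. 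With $T\ge 4/(1-\gamma)$ this becomes $\|\Delta_T\|_\infty\le (u_k+\eta_k^{-1}\|D^{\pi^*}_{\hat\pi_k}\|_\infty)/4$. The stepsize $\eta_k=2^kD_0/u_k$ makes the Bregman term equal to $u_k\|D^{\pi^*}_{\hat\pi_k}\|_\infty/(2^kD_0)$. We therefore need $\|D^{\pi^*}_{\hat\pi_k}\|_\infty\le 2^kD_0$, which gives $\|\Delta_T\|_\infty\le u_k/2=u_{k+1}$.

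\emph{Step 3: Bregman control (the main obstacle).} It remains to show that $\|D^{\pi^*}_{\hat\pi_k}\|_\infty\le 2^kD_0$, i.e., that the divergence to $\pi^*$ at most doubles per epoch. From the three-point inequality with $p=\pi^*$, and using $\Gamma_{\pi_{t+1}}V_t-\Gamma_{\pi^*}V_t\ge V_{t+1}-V^*-\gamma\P_{\pi^*}(V_t-V^*)\ge -\gamma\P_{\pi^*}\Delta_t\ge-\gamma u_k$ (since $V_{t+1}\ge V^*$), we obtain $D^{\pi^*}_{\pi_{t+1}}\le D^{\pi^*}_{\pi_t}+\eta_k\gamma u_k$ pointwise. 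This per-step bound is too crude, since it would accumulate over $T$ steps. So we instead sum the three-point inequalities over the epoch. This gives $D^{\pi^*}_{\pi_T}\le D^{\pi^*}_{\pi_0}+\eta_k\sum_t\big[\gamma\P_{\pi^*}\Delta_t-\Delta_{t+1}\big]$. We bound the bracket sum using the same telescoping estimate as in Step 2, now without the factor $(1-\gamma)$ loss: $\sum_t(\gamma\P_{\pi^*}\Delta_t-\Delta_{t+1})\le\gamma\P_{\pi^*}\Delta_0-\Delta_T+\sum_{t\ge1}(\gamma\P_{\pi^*}-I)\Delta_t$. The last sum is dominated by $\gamma\P_{\pi^*}\Delta_0\le u_k$, if one bounds it carefully by nonnegativity and monotonicity. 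Hence $D^{\pi^*}_{\hat\pi_{k+1}}\le D^{\pi^*}_{\hat\pi_k}+\eta_ku_k=D^{\pi^*}_{\hat\pi_k}+2^kD_0$. Induction from $D^{\pi^*}_{\hat\pi_0}\le D_0$ then gives $D^{\pi^*}_{\hat\pi_k}\le 2^kD_0$. Making this last summation estimate rigorous in the vector (pointwise) sense is the delicate part. If pointwise control fails, we would fall back to sup norms throughout, which still suffices since only $\|\cdot\|_\infty$ bounds are needed.
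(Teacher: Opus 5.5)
Steps 1 and 2 are fine. Step 2 is a correct sup-norm version of the paper's contraction estimate, conditional on $\|D^{\pi^*}_{\hat\pi_k}\|_\infty\le 2^kD_0$. Step 3, however, has a genuine gap, and the whole induction rests on it. To get $D^{\pi^*}_{\hat\pi_{k+1}}\le D^{\pi^*}_{\hat\pi_k}+\eta_k u_k$ you need $\tsum_{t=1}^{T-1}(\gamma\P_{\pi^*}-I)\Delta_t\lesssim u_k$ componentwise. Nonnegativity and monotonicity of $\Delta_t$ do not give this, because $(\gamma\P_{\pi^*}\Delta_t-\Delta_t)(s)$ has no sign. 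They do not exclude a state $s$ where:
\begin{itemize}
\item a suboptimal action with nearly optimal $Q^*$-value leads to states whose gaps have already closed;
\item $\pi^*(\cdot|s)$ leads to states whose gaps are still of order $u_k$.
\end{itemize}
At such a state $\Delta_{t+1}(s)\approx 0$ while $\gamma(\P_{\pi^*}\Delta_t)(s)\approx\gamma u_k$, and the prox step really does push $\pi_t(\cdot|s)$ away from $\pi^*(\cdot|s)$. The only control you have on $\tsum_t\P_{\pi^*}\Delta_t$ comes from Step 2, and it costs a factor $(1-\gamma)^{-1}$. Your sup-norm fallback makes this explicit: it yields $\|D^{\pi^*}_{\hat\pi_{k+1}}\|_\infty\le(\|D^{\pi^*}_{\hat\pi_k}\|_\infty+\gamma\eta_ku_k)/(1-\gamma)\le(1+\gamma)2^kD_0/(1-\gamma)$. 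That already violates, by a factor of order $(1-\gamma)^{-1}$, the hypothesis Step 2 needs at epoch $k+1$, so with $T=\lceil 4/(1-\gamma)\rceil$ you get no halving. The paper never proves an $O(2^kD_0)$ bound on $\|D^{\pi^*}_{\hat\pi_k}\|_\infty$; even Theorem~\ref{thm:stoc1} only gets $O((1-\gamma)^{-2})$.

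The missing idea, which is the paper's route in Lemma~\ref{lem:converge_deter} and Proposition~\ref{prop:deter}, is to change the potential. Track $R\,D^{\pi^*}_{\hat\pi_k}$ with $R:=(I-\gamma\P_{\pi^*})^{-1}$, instead of $D^{\pi^*}_{\hat\pi_k}$. Write your summed three-point inequality as $D^{\pi^*}_{\pi_T}+\eta_k\Delta_T+\eta_k(I-\gamma\P_{\pi^*})\tsum_{t=1}^{T-1}\Delta_t\le D^{\pi^*}_{\pi_0}+\eta_k\gamma\P_{\pi^*}\Delta_0$. Then apply the monotone operator $R$, which turns the sign-indefinite term into a nonnegative one:
\[
R\,D^{\pi^*}_{\pi_T}+\eta_k R\,\Delta_T+\eta_k\tsum_{t=1}^{T-1}\Delta_t\le R\,D^{\pi^*}_{\pi_0}+\eta_k\gamma R\,\P_{\pi^*}\Delta_0 .
\]
Carry the invariant $\|R\,D^{\pi^*}_{\hat\pi_k}\|_\infty\le 2^kD_0/(1-\gamma)$, which holds at $k=0$ since $\|D^{\pi^*}_{\hat\pi_0}\|_\infty\le D_0$. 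Both conclusions then follow from the display above:
\begin{itemize}
\item Dropping the Bregman term and using $R\,\Delta_T\ge\Delta_T$ and $\Delta_t\ge\Delta_T$ gives $T\|\Delta_T\|_\infty\le(1+\gamma)u_k/(1-\gamma)$, hence $\|\Delta_T\|_\infty\le u_k/2$.
\item Dropping the $\Delta$ terms gives $\|R\,D^{\pi^*}_{\hat\pi_{k+1}}\|_\infty\le 2^kD_0/(1-\gamma)+\eta_k\gamma u_k/(1-\gamma)\le 2^{k+1}D_0/(1-\gamma)$.
\end{itemize}
Equivalently, in Step 2 you should apply $R$ to $(I-\gamma\P_{\pi^*})\tsum_{t=1}^{T}\Delta_t\le\gamma\P_{\pi^*}\Delta_0+\eta_k^{-1}D^{\pi^*}_{\hat\pi_k}$ rather than taking sup norms. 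That way Step 2 consumes exactly the weighted quantity that Step 3 produces.
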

\vgap

Theorem~\ref{thm_deter} indicates that Algorithm~\ref{algo:VMD1} requires $\Tilde{\cO}((1-\gamma)^{-1})$ prox-mapping computations of the form~\eqref{VMDstep1} to obtain an $\epsilon$-optimal policy for a DMDP. By comparison, the classical value iteration also requires $\Tilde{\cO}((1-\gamma)^{-1})$ iterations of the form~\eqref{valueiteration} to achieve the same objective.

\subsection{Convergence Analysis}

To obtain Theorem~\ref{thm_deter}, we first show the following result that is well-known for mirror descent methods.

\begin{lemma}
    In any epoch $k\in\{0,1,...,K-1\}$ of Algorithm~\ref{algo:VMD1}, for any iteration $t\in\{0,1,...,T-1\}$, state $s\in\bS$ and policy $\pi(\cdot|s)\in\Delta_{\bA}$, we have
    \begin{align*}
        \eta_t[\langle c(s,\cdot)+\gamma\P_sV_t,\pi_{t+1}(\cdot|s)-\pi(\cdot|s)\rangle+h(\pi_{t+1}(\cdot|s))-h(\pi(\cdot|s))]+D^{\pi_{t+1}}_{\pi_t}(s)\leq D^{\pi}_{\pi_t}(s)-(1+\eta_t\mu)D^{\pi}_{\pi_{t+1}}(s).
    \end{align*}
    \label{lem:threepoint1}
\end{lemma}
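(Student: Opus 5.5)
This is the standard three-point lemma for mirror descent with a convex composite term, and I would prove it by combining the first-order optimality condition of the prox-mapping \eqref{eq1:algo1} with the three-point identity for Bregman divergences. Fix $s$ and write $g:=c(s,\cdot)+\gamma\P_sV_t\in\R^{|\bA|}$, $x_t=\pi_t(\cdot|s)$, $x_{t+1}=\pi_{t+1}(\cdot|s)$ and $x=\pi(\cdot|s)$.

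\emph{Optimality condition.} The objective in \eqref{eq1:algo1} is $\phi(y)=\eta_t[\langle g,y\rangle+h(y)]+D(x_t,y)$. This is convex and continuous on $\Delta_{\bA}$, and strongly convex because $\omega$ is $1$-strongly convex, so the minimizer $x_{t+1}$ exists and is unique. By optimality there is a subgradient $h'(x_{t+1})\in\partial h(x_{t+1})$ such that for all $x\in\Delta_{\bA}$,
\[
\langle \eta_t[g+h'(x_{t+1})]+\nabla\omega(x_{t+1})-\nabla\omega(x_t),\,x-x_{t+1}\rangle\geq 0 .
\]
This is the step that needs the most care, for three reasons.
\begin{itemize}
\item $h$ is only assumed continuous and convex, with subgradients in the Clarke sense.
\item $\nabla\omega$ may blow up on the boundary of the simplex, as for KL or Tsallis.
\end{itemize}
I would handle this in the usual way. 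Either assume the prox point lies where $\omega$ is differentiable (for KL and Tsallis the minimizer is interior), or apply the optimality condition for the convex function $\phi$ over the convex set $\Delta_{\bA}$ together with the sum rule for subdifferentials. The sum rule is valid here because $\langle g,\cdot\rangle+D(x_t,\cdot)$ is differentiable in the relative interior of the domain. I would also take the $h'$ appearing in \eqref{hconvex} to be this same subgradient at $x_{t+1}$.

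\emph{Three-point identity.} Directly from the definition of $D$,
\[
\langle\nabla\omega(x_{t+1})-\nabla\omega(x_t),\,x-x_{t+1}\rangle = D(x_t,x)-D(x_t,x_{t+1})-D(x_{t+1},x).
\]
In the paper's notation this reads $D^{\pi}_{\pi_t}(s)-D^{\pi_{t+1}}_{\pi_t}(s)-D^{\pi}_{\pi_{t+1}}(s)$.

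\emph{Convexity of $h$.} By \eqref{hconvex} applied with base point $x_{t+1}$,
\[
\langle h'(x_{t+1}),x-x_{t+1}\rangle\leq h(x)-h(x_{t+1})-\mu D(x_{t+1},x).
\]

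\emph{Combination.} Substituting the three-point identity and the convexity bound into the optimality condition gives
\[
\eta_t\langle g,x_{t+1}-x\rangle+\eta_t[h(x_{t+1})-h(x)]-\eta_t\mu D(x_{t+1},x)\leq D(x_t,x)-D(x_t,x_{t+1})-D(x_{t+1},x).
\]
Rearranging, this is exactly the claimed inequality
\[
\eta_t[\langle g,x_{t+1}-x\rangle+h(x_{t+1})-h(x)]+D^{\pi_{t+1}}_{\pi_t}(s)\leq D^{\pi}_{\pi_t}(s)-(1+\eta_t\mu)D^{\pi}_{\pi_{t+1}}(s).
\]

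The remaining steps are routine algebra. The only real obstacle is rigorously justifying the optimality condition with the specific subgradient used in \eqref{hconvex}, in particular when the minimizer lies on the boundary of the simplex.
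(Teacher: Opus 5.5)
Your proposal is correct and follows the same route as the paper, which simply invokes the three-point lemma \cite[Lemma 3.4]{lan2020first}. That lemma combines the optimality condition of the prox-mapping~\eqref{eq1:algo1}, the three-point identity for $D$, and the ($\mu$-strong) convexity of $h$ in~\eqref{hconvex}, exactly as you do; your remark about subgradients at boundary minimizers is a fair point that the paper leaves implicit. One small sign slip: in your ``Combination'' display the term $\eta_t\mu D(x_{t+1},x)$ should carry a plus sign on the left-hand side, since it comes from $-\eta_t\mu D(x_{t+1},x)$ on the right of the optimality inequality, and this is what makes the rearrangement yield $-(1+\eta_t\mu)D^{\pi}_{\pi_{t+1}}(s)$ (as written it would give $-(1-\eta_t\mu)D^{\pi}_{\pi_{t+1}}(s)$).
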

\begin{proof}
    This proof directly follows from the proof of~\cite[Lemma 3.4]{lan2020first}, by using the optimality condition of~\eqref{VMDstep1}, the property of the Bregman divergence and the ($\mu$-strong) convexity of $h$ in~\eqref{hconvex}.
\end{proof}
\vgap

Built upon Lemma~\ref{lem:threepoint1}, we next show the single-epoch convergence property of Algorithm~\ref{algo:VMD1}.

\begin{lemma}
    For any epoch $k\in\{0,1,...,K-1\}$ of Algorithm~\ref{algo:VMD1}, taking $\eta_t=\eta>0$ for $t=0,1,...,T-1$, we have
    \begin{align}
        (V_T-V^*)+\tsum_{t=1}^{T-1}(I-\gamma\P_{\pi^*})(V_t-V^*)+\tfrac{1}{\eta}D_{\pi_{T}}^{\pi^*}\leq \tfrac{1}{\eta}D_{\pi_0}^{\pi^*}+\gamma\P_{\pi^*}(V_0-V^*).
        \label{res1:lem3p1}
    \end{align}
    \label{lem:converge_deter}
\end{lemma}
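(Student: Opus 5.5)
Apply Lemma~\ref{lem:threepoint1} with $\pi=\pi^*$ at every state, drop the nonnegative term $\eta\mu D^{\pi^*}_{\pi_{t+1}}$ on the right, and telescope over $t=0,\dots,T-1$.

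\textbf{Step 1 (one-step inequality).} By \eqref{eq2:algo1}, $\langle c(s,\cdot)+\gamma\P_sV_t,\pi_{t+1}(\cdot|s)\rangle+h(\pi_{t+1}(\cdot|s))=V_{t+1}(s)$. Likewise $\langle c(s,\cdot)+\gamma\P_sV_t,\pi^*(\cdot|s)\rangle+h(\pi^*(\cdot|s))=(\Gamma_{\pi^*}V_t)(s)$. Since $V^*=\Gamma_{\pi^*}V^*$, we have $\Gamma_{\pi^*}V_t=V^*+\gamma\P_{\pi^*}(V_t-V^*)$. Dividing Lemma~\ref{lem:threepoint1} by $\eta$ and dropping $D^{\pi_{t+1}}_{\pi_t}\ge0$ and the $\mu$ term gives, in vector form,
\begin{align*}
(V_{t+1}-V^*)-\gamma\P_{\pi^*}(V_t-V^*)\le \tfrac1\eta D^{\pi^*}_{\pi_t}-\tfrac1\eta D^{\pi^*}_{\pi_{t+1}},\qquad t=0,\dots,T-1.
\end{align*}

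\textbf{Step 2 (telescoping).} Sum over $t=0,\dots,T-1$. The Bregman terms telescope to $\tfrac1\eta D^{\pi^*}_{\pi_0}-\tfrac1\eta D^{\pi^*}_{\pi_T}$. On the left we get $\sum_{t=1}^{T}(V_t-V^*)-\sum_{t=0}^{T-1}\gamma\P_{\pi^*}(V_t-V^*)$. Regroup it as
\begin{align*}
(V_T-V^*)+\sum_{t=1}^{T-1}(I-\gamma\P_{\pi^*})(V_t-V^*)-\gamma\P_{\pi^*}(V_0-V^*).
\end{align*}
Moving $\gamma\P_{\pi^*}(V_0-V^*)$ and $\tfrac1\eta D^{\pi^*}_{\pi_T}$ to the appropriate sides yields exactly \eqref{res1:lem3p1}.

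The only point needing care is the identification $\Gamma_{\pi^*}V_t-V^*=\gamma\P_{\pi^*}(V_t-V^*)$. This uses that $V^*=V^{\pi^*}$ is the fixed point of $\Gamma_{\pi^*}$, which holds since $\pi^*$ attains the minimum in $\Gamma V^*$. The inequality is linear, and all inequalities are componentwise with no matrix multiplication applied afterward, so no monotonicity argument is needed. The rest is bookkeeping.
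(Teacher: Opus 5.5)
Your proposal is correct and follows essentially the same route as the paper: invoke Lemma~\ref{lem:threepoint1} with $\pi=\pi^*$, use $V^*=\Gamma_{\pi^*}V^*$ to write the $\pi^*$-term as $V^*+\gamma\P_{\pi^*}(V_t-V^*)$ and the $\pi_{t+1}$-term as $V_{t+1}$, discard the nonnegative terms $D^{\pi_{t+1}}_{\pi_t}$ and $\eta\mu D^{\pi^*}_{\pi_{t+1}}$, and telescope over $t=0,\dots,T-1$. Dividing by $\eta$ explicitly, as you do, is slightly cleaner than the paper's intermediate display, which omits the factor $\eta$ in front of $\gamma(\P_{\pi^*})_s(V_t-V^*)$ before summing.
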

\begin{proof}
    Taking $\pi=\pi^*$ and $\eta_t=\eta$ in Lemma~\ref{lem:threepoint1}, for any $t\in\{0,1,...,T-1\}$ and $s\in\bS$, we have
    \begin{align}
        \eta\left[\left\langle c(s,\cdot)+\gamma\P_sV_k,\pi_{t+1}(\cdot|s)-\pi^*(\cdot|s)\right\rangle+h(\pi_{t+1}(\cdot|s))-h(\pi^*(\cdot|s)\right]+D^{\pi_{t+1}}_{\pi_t}(s)\leq D^{\pi^*}_{\pi_t}(s)-D^{\pi^*}_{\pi_{t+1}}(s).
        \label{eq1:lem_converge_deter}
    \end{align}
    Notice that
    \begin{align}
        \big\langle c(s,\cdot)+\gamma\P_sV_t,\pi^*(\cdot|s)\big\rangle+h(\pi^*(\cdot|s))&=\big\langle c(s,\cdot)+\gamma\P_sV^*,\pi^*(\cdot|s)\big\rangle+h(\pi^*(\cdot|s))+\big\langle\gamma\P_s(V_t-V^*),\pi^*(\cdot|s)\big\rangle \nn\\
        &=V^*(s)+\gamma (\P_{\pi^*})_s(V_t-V^*),
        \label{eq2:lem_converge_deter}
    \end{align}
    where the second equality applies the relation $V^*=c_{\pi^*}+\gamma P_{\pi^*}V^*+h(\pi^*)$. In view of~\eqref{eq2:algo1},~\eqref{eq2:lem_converge_deter} and the fact that $D_{\pi_t}^{\pi_{t+1}}\geq 0$,~\eqref{eq1:lem_converge_deter} becomes
    \begin{align}
        \eta (V_{t+1}(s)-V^*(s))\leq D^{\pi^*}_{\pi_t}(s)-D^{\pi^*}_{\pi_{t+1}}(s)+\gamma(\P_{\pi^*})_s(V_t-V^*).
        \label{eq3:lem_converge_deter}
    \end{align}
    Since~\eqref{eq3:lem_converge_deter} holds for every $s\in\bS$, then
    \begin{align}
        \eta (V_{t+1}-V^*)\leq D^{\pi^*}_{\pi_t}-D^{\pi^*}_{\pi_{t+1}}+\gamma\P_{\pi^*}(V_t-V^*),
        \label{eq4:lem_converge_deter}
    \end{align}
    and we obtain~\eqref{res1:lem3p1} by summing up~\eqref{eq4:lem_converge_deter} for $t=0,1,...,T-1$.
\end{proof}
\vgap

By multiplying both sides of~\eqref{res1:lem3p1} by $(I-\gamma\P_{\pi^*})^{-1}$, Lemma~\ref{lem:converge_deter} provides an upper bound one the cumulative value gaps $V_t-V^*,t=1,2,...,T$, as well as the terminal Bregman distance $(I-\gamma\P_{\pi^*})^{-1}D_{\pi_T}^{\pi^*}$. Next, in order to bound the policy value gap $V^{\pi_T}-V^*$, we establish the following two monotonicity properties: $V_0\geq V_1\geq\cdots\geq V_T$ and $V_t\geq\Gamma_{\pi_t}V_t$ for $t=0,1,...,T$. Together with Lemma~\ref{lem:converge_deter}, these properties will yield an upper bound on $V^{\pi_T}-V^*$.

\begin{lemma}
    The following properties hold for every epoch $k\in\{0,1,...,K-1\}$ of Algorithm~\ref{algo:VMD1}:
    \begin{itemize}
        \item [(a)] $V_0\geq V_1\geq V_2\geq\cdots\geq V_T$;
        \item [(b)] $V_t\geq\Gamma_{\pi_t}V_t\geq V^{\pi_t}\geq V^*$ for $t=0,1,...,T$.
    \end{itemize}
    \label{lem:monotone_deter}
\end{lemma}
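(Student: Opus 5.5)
We prove (a) and the first inequality of (b) jointly by induction on $t$. The other two inequalities in (b) are standard. The engine is Lemma~\ref{lem:threepoint1} applied with the comparison policy $\pi=\pi_t$. Since $D^{\pi_t}_{\pi_t}=0$, this choice gives, for every $s\in\bS$,
\begin{align*}
\eta_t\big[\langle c(s,\cdot)+\gamma\P_sV_t,\pi_{t+1}(\cdot|s)\rangle+h(\pi_{t+1}(\cdot|s))\big]+D^{\pi_{t+1}}_{\pi_t}(s)\leq \eta_t(\Gamma_{\pi_t}V_t)(s)-(1+\eta_t\mu)D^{\pi_t}_{\pi_{t+1}}(s).
\end{align*}
By~\eqref{eq2:algo1} the left bracket equals $V_{t+1}(s)$, and the Bregman terms are nonnegative. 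Hence we get the key inequality $V_{t+1}=\Gamma_{\pi_{t+1}}V_t\leq\Gamma_{\pi_t}V_t$ for every $t$.

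\emph{Base case.} At $t=0$ of epoch $k=0$ we have $V_0=\hat V_0=\tfrac{1+\Bar h}{1-\gamma}\mathbf{1}_{\bS}$. Since $c\le 1$, $h\le\Bar h$ and $\P_{\pi_0}$ is stochastic,
\[
\Gamma_{\pi_0}V_0\le (1+\Bar h)\mathbf{1}_{\bS}+\gamma\tfrac{1+\Bar h}{1-\gamma}\mathbf{1}_{\bS}=V_0 .
\]
For a later epoch $k\geq 1$, we have $V_0=\hat V_k=V_T$ and $\pi_0=\hat\pi_k=\pi_T$ from the previous epoch. So the property $V_0\ge\Gamma_{\pi_0}V_0$ is inherited from the conclusion at $t=T$ of epoch $k-1$. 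This requires an outer induction over epochs, which must be stated explicitly.

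\emph{Inductive step.} Assume $V_t\ge\Gamma_{\pi_t}V_t$. The key inequality then gives $V_{t+1}\le\Gamma_{\pi_t}V_t\le V_t$, which is the monotonicity step in (a). For (b) at $t+1$, we use the monotonicity of $\Gamma_{\pi_{t+1}}$ together with $V_{t+1}\le V_t$:
\[
\Gamma_{\pi_{t+1}}V_{t+1}\le\Gamma_{\pi_{t+1}}V_t=V_{t+1}.
\]
This closes the induction, proving (a) for all $t$ and $V_t\ge\Gamma_{\pi_t}V_t$ for $t=0,\dots,T$.

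\emph{Remaining inequalities in (b).} Iterating the monotone map $\Gamma_{\pi_t}$ gives
\[
V_t\ge\Gamma_{\pi_t}V_t\ge\Gamma_{\pi_t}^2V_t\ge\cdots .
\]
Because $\Gamma_{\pi_t}$ is a $\gamma$-contraction, $\Gamma_{\pi_t}^nV_t\to V^{\pi_t}$, so $\Gamma_{\pi_t}V_t\ge V^{\pi_t}$. Finally, $V^{\pi_t}\ge V^*$ holds by the optimality of $\pi^*$ in~\eqref{def:pistar}.

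The only delicate point is the base case across epochs, which is handled by the outer induction. The rest is a direct consequence of Lemma~\ref{lem:threepoint1} with $\pi=\pi_t$.
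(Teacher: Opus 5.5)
Your proof is correct and follows essentially the same route as the paper. Like the paper, you apply Lemma~\ref{lem:threepoint1} with $\pi=\pi_t$ to obtain $\Gamma_{\pi_{t+1}}V_t\le\Gamma_{\pi_t}V_t$, induct on $t$ using the monotonicity of $\Gamma_{\pi_{t+1}}$, start from $\Gamma_{\pi_0}\hat V_0\le\hat V_0$, and carry $V_T\ge\Gamma_{\pi_T}V_T$ into the next epoch, while your justification of $\Gamma_{\pi_t}V_t\ge V^{\pi_t}$ by iterating the monotone contraction matches the argument sketched before the lemma.
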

\begin{proof}
    We prove the result by mathematical induction. For the first epoch $k=0$, observe that $V_0=(1+\Bar{h})/(1-\gamma)\cdot\mathbf{1}_{\bS}$ and $\Gamma_{\pi_0}V_0=c_{\pi_0}+\gamma\P_{\pi_0}V_0+h(\pi_0)\leq [1+\gamma (1+\Bar{h})/(1-\gamma)+\Bar{h}]\cdot\mathbf{1}_{\bS}=V_0$, then $V_0\geq\Gamma_{\pi_0}V_0\geq V^{\pi_0}\geq V^*$ holds. Now, suppose that for some $t\in\{0,1,...,T-1\}$, $V_0\geq V_1\geq\cdots\geq V_t$ and $V_i\geq\Gamma_{\pi_i}V_i,i=0,1,...,t$ are satisfied. It then remains to show $V_t\geq V_{t+1}$ and $V_{t+1}\geq\Gamma_{\pi_{t+1}}V_{t+1}$. To this end, for any $s\in\bS$, Lemma~\ref{lem:threepoint1} with $\pi=\pi_t$ yields
    \begin{align}
        \eta_t\left[\left\langle c(s,\cdot)+\gamma\P_sV_t,\pi_{t+1}(\cdot|s)-\pi_t(\cdot|s)\right\rangle+h(\pi_{t+1}(\cdot|s))-h(\pi_t(\cdot|s))\right]+D^{\pi_{t+1}}_{\pi_t}(s)\leq 0.
        \label{eq1:lem_monotone_deter}
    \end{align}
    Since we have $\Gamma_{\pi_t}V_t=c_{\pi_t}+\gamma\P_{\pi_t}V_t+h(\pi_t)$ and $\Gamma_{\pi_{t+1}}V_t=c_{\pi_{t+1}}+\gamma\P_{\pi_{t+1}}V_t+h(\pi_{t+1})$ by definitions, and $D_{\pi_t}^{\pi_{t+1}}\geq 0$, then~\eqref{eq1:lem_monotone_deter} simplifies to $(\Gamma_{\pi_{t+1}}V_t)(s)\leq(\Gamma_{\pi_t}V_t)(s)$, and hence
    \begin{align}
        \Gamma_{\pi_{t+1}}V_t\leq\Gamma_{\pi_t}V_t.
        \label{eq2:lem_monotone_deter}
    \end{align}
    Therefore, we have
    \begin{align}
        V_t\geq\Gamma_{\pi_t}V_t\geq\Gamma_{\pi_{t+1}}V_t=V_{t+1},
        \label{eq3:lem_monotone_deter}
    \end{align}
    where the first inequality uses the inductive hypothesis, the second inequality follows from~\eqref{eq2:lem_monotone_deter}, and the equality holds by~\eqref{eq2:algo1}. Moreover, we obtain
    \begin{align}
        V_{t+1}=\Gamma_{\pi_{t+1}}V_t\geq\Gamma_{\pi_{t+1}}V_{t+1},
        \label{eq4:lem_monotone_deter}
    \end{align}
    where the equality follows from~\eqref{eq2:algo1}, and the inequality uses $V_t\geq V_{t+1}$ by~\eqref{eq3:lem_monotone_deter} and the monotonicity of $\Gamma_{\pi_{t+1}}$. Combining~\eqref{eq3:lem_monotone_deter} and~\eqref{eq4:lem_monotone_deter} completes the induction. Since $V_T\geq\Gamma_{\pi_T}V_T$ for the current epoch implies $V_0\geq\Gamma_{\pi_0}V_0$ for the next epoch, we apply the same inductive argument to all the epochs $k=0,1,...,K-1$ and therefore complete the proof.
\end{proof}
\vgap

Notice that for the first epoch $k=0$, $\|\hat{V}_0-V^*\|_\infty\leq u_0$ and $\|(I-\gamma\P_{\pi^*})^{-1}D_{\hat{\pi}_0}^{\pi^*}\|_\infty\leq D_0/(1-\gamma)$ are satisfied. Utilizing Lemma~\ref{lem:converge_deter} and Lemma~\ref{lem:monotone_deter}, we show the convergence of $\|\hat{V}_k-V^*\|_\infty$ and boundedness of $\|(I-\gamma\P_{\pi^*})^{-1}D_{\hat{\pi}_k}^{\pi^*}\|_\infty$ for every single epoch $k\in\{1,2,...,K\}$ of Algorithm~\ref{algo:VMD1} as follows.

\begin{proposition}
    For any epoch $k\in\{0,1,...,K-1\}$ of Algorithm~\ref{algo:VMD1} and some $D>0$, if $\|\hat{V}_k-V^*\|_\infty\leq u_k$ and $\|(I-\gamma\P_{\pi^*})^{-1}D_{\hat{\pi}_k}^{\pi^*}\|_\infty\leq D/(1-\gamma)$ hold, then taking $T=\lceil 4/(1-\gamma)\rceil$ and $\eta_t=D/u_k$ for $t=0,1,...,T-1$ yields
    \begin{align}
        (a) \ \big\|V^{\hat{\pi}_{k+1}}-V^*\big\|_\infty\leq\|\hat{V}_{k+1}-V^*\|_\infty\leq u_{k+1}=\tfrac{u_k}{2} \qquad \text{and} \qquad (b) \ \big\|(I-\gamma\P_{\pi^*})^{-1}D_{\hat{\pi}_{k+1}}^{\pi^*}\big\|_\infty\leq\tfrac{2D}{1-\gamma}.
        \label{res1:prop_deter}
    \end{align}
   \label{prop:deter}
\end{proposition}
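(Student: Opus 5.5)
Within epoch $k$ we have $V_0=\hat V_k$ and $\pi_0=\hat\pi_k$. We apply Lemma~\ref{lem:converge_deter} with $\eta=D/u_k$ and multiply both sides of \eqref{res1:lem3p1} by the monotone operator $(I-\gamma\P_{\pi^*})^{-1}$. This gives
\begin{align*}
(I-\gamma\P_{\pi^*})^{-1}(V_T-V^*)+\tsum_{t=1}^{T-1}(V_t-V^*)+\tfrac{1}{\eta}(I-\gamma\P_{\pi^*})^{-1}D_{\pi_T}^{\pi^*}\leq \tfrac{1}{\eta}(I-\gamma\P_{\pi^*})^{-1}D_{\pi_0}^{\pi^*}+(I-\gamma\P_{\pi^*})^{-1}\gamma\P_{\pi^*}(V_0-V^*).
\end{align*}
We bound the right-hand side entrywise. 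The hypothesis gives $\tfrac1\eta\|(I-\gamma\P_{\pi^*})^{-1}D_{\pi_0}^{\pi^*}\|_\infty\le \tfrac{u_k}{D}\cdot\tfrac{D}{1-\gamma}=\tfrac{u_k}{1-\gamma}$. Since $\P_{\pi^*}$ is stochastic, $(I-\gamma\P_{\pi^*})^{-1}\gamma\P_{\pi^*}$ is nonnegative with row sums $\gamma/(1-\gamma)$, so the second term is at most $\tfrac{\gamma u_k}{1-\gamma}$. Altogether the right-hand side is at most $\tfrac{(1+\gamma)u_k}{1-\gamma}\le\tfrac{2u_k}{1-\gamma}$ entrywise.

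On the left-hand side, every term is nonnegative by Lemma~\ref{lem:monotone_deter}(b): $V_t\ge V^*$, and $(I-\gamma\P_{\pi^*})^{-1}$ preserves nonnegativity. For part (a), Lemma~\ref{lem:monotone_deter}(a) gives $V_t\ge V_T$, so $V_t-V^*\ge V_T-V^*\ge0$ for each $t$. Also $(I-\gamma\P_{\pi^*})^{-1}(V_T-V^*)\ge V_T-V^*$ by the expansion noted in Section~\ref{section:background}. Dropping the Bregman term, the left-hand side is therefore at least $T(V_T-V^*)$, which yields
\[
0\le V_T-V^*\le \tfrac{2u_k}{T(1-\gamma)}\le \tfrac{u_k}{2}
\]
because $T\ge 4/(1-\gamma)$. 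Lemma~\ref{lem:monotone_deter}(b) further gives $V^*\le V^{\pi_T}\le V_T$, so $\|V^{\hat\pi_{k+1}}-V^*\|_\infty\le\|\hat V_{k+1}-V^*\|_\infty\le u_k/2=u_{k+1}$, proving (a).

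For part (b), we instead drop the nonnegative value terms and keep only the Bregman term: $\tfrac1\eta(I-\gamma\P_{\pi^*})^{-1}D_{\pi_T}^{\pi^*}\le\tfrac{2u_k}{1-\gamma}$. Multiplying by $\eta=D/u_k$ gives $\|(I-\gamma\P_{\pi^*})^{-1}D_{\hat\pi_{k+1}}^{\pi^*}\|_\infty\le\tfrac{2D}{1-\gamma}$.

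The main obstacle is justifying the dropping of terms. This requires the nonnegativity of all left-hand-side terms and the monotone decrease of $V_t$, which is exactly what Lemma~\ref{lem:monotone_deter} supplies. We must also check that $(I-\gamma\P_{\pi^*})^{-1}$ preserves the inequality of Lemma~\ref{lem:converge_deter} entrywise, which holds because all its entries are nonnegative. The remaining work is the bookkeeping of constants: the factor $1+\gamma\le 2$, combined with $T\ge4/(1-\gamma)$, gives exactly the halving $u_{k+1}=u_k/2$.
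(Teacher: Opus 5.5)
Your proposal is correct and follows essentially the same route as the paper. You apply Lemma~\ref{lem:converge_deter} with $\eta=D/u_k$, multiply by the monotone operator $(I-\gamma\P_{\pi^*})^{-1}$, and bound the right-hand side by $2u_k/(1-\gamma)$. You then use Lemma~\ref{lem:monotone_deter} to lower-bound the left-hand side by $T(V_T-V^*)$ for part (a), and by the Bregman term alone for part (b).
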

\begin{proof}
    For any epoch $k\in\{0,1,...,K-1\}$, in view of the stepsize $\eta_t=D/u_k$, Lemma~\ref{lem:converge_deter} implies
    \begin{align}
        (V_T-V^*)+\tsum_{t=1}^{T-1}(I-\gamma\P_{\pi^*})(V_t-V^*)+\tfrac{u_k}{D}D_{\pi_T}^{\pi^*}\leq\tfrac{u_k}{D}D_{\pi_0}^{\pi^*}+\gamma\P_{\pi^*}(V_0-V^*).
        \label{eq1:prop_deter}
    \end{align}
    Due to the monotonicity of the operator $(I-\gamma\P_{\pi^*})^{-1}$, it follows from~\eqref{eq1:prop_deter} that
    \begin{align}
        &(I-\gamma\P_{\pi^*})^{-1}(V_T-V^*)+\tsum_{t=1}^{T-1}(V_t-V^*)+\tfrac{u_k}{D}(I-\gamma\P_{\pi^*})^{-1}D_{\pi_T}^{\pi^*} \nn\\
        &\leq \tfrac{u_k}{D}(I-\gamma\P_{\pi^*})^{-1}D_{\pi_0}^{\pi^*}+\gamma(I-\gamma\P_{\pi^*})^{-1}\P_{\pi^*}(V_0-V^*).
        \label{eq2:prop_deter}
    \end{align}
    Since by Lemma~\ref{lem:monotone_deter} $V_0-V^*\geq V_1-V^*\geq\cdots\geq V_T-V^*\geq \mathbf{0}_{\bS}$ , we then have $(I-\gamma\P_{\pi^*})^{-1}(V_T-V^*)\geq V_T-V^*$, and thus~\eqref{eq2:prop_deter} yields
    \begin{align}
        T(V_T-V^*)+\tfrac{u_k}{D}(I-\gamma\P_{\pi^*})^{-1}D_{\pi_T}^{\pi^*}\leq \tfrac{u_k}{D}(I-\gamma\P_{\pi^*})^{-1}D_{\pi_0}^{\pi^*}+\gamma(I-\gamma\P_{\pi^*})^{-1}\P_{\pi^*}(V_0-V^*).
        \label{eq3:prop_deter}
    \end{align}
    Taking $\|\cdot\|_\infty$ on the right-hand side of~\eqref{eq3:prop_deter}, and plugging in $\hat{V}_{k+1}=V_T$, $\hat{\pi}_k=\pi_0$ and $\hat{\pi}_{k+1}=\pi_T$, we have
    \begin{align}
        T(\hat{V}_{k+1}-V^*)+\tfrac{u_k}{D}(I-\gamma\P_{\pi^*})^{-1}D_{\hat{\pi}_{k+1}}^{\pi^*}\leq \tfrac{u_k}{(1-\gamma)}\cdot\mathbf{1}_{\bS}+\tfrac{\gamma u_k}{1-\gamma}\cdot\mathbf{1}_{\bS}\leq\tfrac{2u_k}{1-\gamma}\cdot\mathbf{1}_{\bS},
        \label{eq4:prop_deter}
    \end{align}
    where the first inequality utilizes $\|(I-\gamma\P_{\pi^*})^{-1}D_{\hat{\pi}_k}^{\pi^*}\|_\infty\leq D/(1-\gamma)$, $\|(I-\gamma\P_{\pi^*})^{-1}\|_{\infty}\leq (1-\gamma)^{-1},\|\P_{\pi^*}\|_\infty\leq 1$ and $\|\hat{V}_k-V^*\|_\infty\leq u_k$. In view of $D_{\pi_T}^{\pi^*}\geq \mathbf{0}_{\bS}$ and the choice $T=\lceil 4/(1-\gamma)\rceil$, we obtain $\|\hat{V}_{k+1}-V^*\|_\infty\leq u_k/2=u_{k+1}$ from~\eqref{eq4:prop_deter}, which, together with $\hat{V}_{k+1}\geq V^{\hat{\pi}_{k+1}}\geq V^*$ by Lemma~\ref{lem:monotone_deter}, implies~\eqref{res1:prop_deter}$(a)$. Moreover, since $\hat{V}_{k+1}\geq V^*\geq\mathbf{0}_{\bS}$, then~\eqref{eq4:prop_deter} also yields~\eqref{res1:prop_deter}$(b)$.
\end{proof}
\vgap

Proposition~\ref{prop:deter} implies that after each epoch $k$ of Algorithm~\ref{algo:VMD1}, which contains $T\in\cO((1-\gamma)^{-1})$ iterations, the policy value gap $\|V^{\hat{\pi}_{k+1}}-V^*\|_\infty$ is bounded by $u_{k+1}$, reducing the previous bound $u_k$ by at least $1/2$. At the same time, the quantity $\|(I-\gamma\P_{\pi^*})^{-1}D_{\hat{\pi}_{k+1}}^{\pi^*}\|_\infty$ remains controlled, increasing by at most a factor of two from one epoch to the next. These properties enable us to establish the linear convergence rate of Algorithm~\ref{algo:VMD1} for computing an $\epsilon$-optimal policy, by increasing the stepsizes across epochs, as stated in Theorem~\ref{thm_deter}.

\paragraph{Proof of Theorem~\ref{thm_deter}}

    This result follows directly from Proposition~\ref{prop:deter} by induction. Specifically, we claim that for every epoch $k\in\{0,1,...,K\}$, we have
    \begin{align}
        \|\hat{V}_k-V^*\|_\infty\leq u_k \quad\text{and}\quad \big\|(I-\gamma\P_{\pi^*})^{-1}D_{\hat{\pi}_k}^{\pi^*}\big\|_\infty\leq\tfrac{2^kD_0}{1-\gamma}
        \label{eq1:thm_deter}
    \end{align}
    The claim is immediate for $k=0$. Now, suppose that~\eqref{eq1:thm_deter} holds for some $k=k'\in\{0,1,...,K-1\}$, then in view of the parameter setting in~\eqref{para1:thm1_deter}, Proposition~\ref{prop:deter} implies that~\eqref{eq1:thm_deter} also holds for $k=k'+1$. This concludes the induction and shows that $\|V^{\hat{\pi}_K}-V^*\|_\infty\leq u_K=u_0/2^K\leq\epsilon$ by~\eqref{para1:thm1_deter}, which completes the proof.
\vgap
\section{Stochastic Value Mirror Descent under General Convex Regularizers}
\label{section:stoc1}

In this section, we extend value mirror descent to the stochastic setting, where direct access to the probability transition kernel $\P$ is unavailable. Instead, we assume the access to a generative sampling model, which allows us to generate multiple independent one-step state transitions $(s_1,s_2,...)$ from any given state-action pair $(s,a)\in\bS\times\bA$, and thereby construct approximations of $\P_{s,a}$. Each query to the generative model, i.e., each independent one-step state transition generated from a state-action pair $(s,a)\in\bS\times\bA$, is counted as one observation. Built on Algorithm~\ref{algo:VMD1}, we develop a stochastic value mirror descent method, analyze its convergence properties, and provide its sample complexity guarantees, i.e., the total number of observations of state transitions.

\subsection{Algorithm and Convergence Result}

We begin by highlighting the main differences between the stochastic value mirror descent (SVMD) method and Algorithm~\ref{algo:VMD1}. First, in the stochastic setting, we construct an estimator $\hat{\P}^{(m)}$ of the transition kernel $\P$ using $m$ independent observations drawn from the generative model, and use $\hat{\P}^{(m)}$ in place of $\P$ in each prox-mapping step of the algorithm. Specifically, for each state-action pair $(s,a)\in\bS\times\bA$, suppose that $m$ independent observations $s_1,s_2,...,s_m$ of one-step state transition starting from $(s,a)$ are generated, then we compute
\begin{align}
    \hat{\P}^{(m)}_{s,a}(s')=\tfrac{1}{m}\tsum_{i=1}^m \mathbb{I}_{\left\{s_i=s'\right\}} \quad \forall s'\in\bS,
    \label{approx_trans_kernel}
\end{align}
where $\mathbb{I}_A$ returns $1$ if the event $A$ is true, and returns $0$ otherwise.

As shown in Algorithm~\ref{algo:VMD2}, we still group the iterations into $K$ epochs, with each epoch consisting of $T_k$ iterations. For any epoch $k\in\{0,1,...,K-1\}$, we compute $\Tilde{\P}^tV_t$ as an unbiased estimation of $\P V_t$ in each iteration $t\in\{0,1,...,T_k-1\}$. In particular, we employ a variance reduction technique by estimating $\P V_0$ in the first iteration $t=0$, and then approximating $\P (V_t-V_0)$ in the later iterations $t=1,2,...,T_k-1$, since the latter quantity may have smaller variance than $\P V_t$. For each state-action pair $(s,a)\in\bS\times\bA$, computing $\Tilde{\P}^0V_0$ requires $m_{k,1}$ observations of state transition, while approximating $\Tilde{\P}^t(V_t-V_0)$ requires $m_{k,2}$ additional observations in each iteration $t\in\{1,2,...,T_k-1\}$. This variance reduction technique is also used in~\cite{sidford2018near} and plays an important role in improving the sample complexity. The estimator $\Tilde{\P}^tV_t$ is then utilized in the prox-mapping step~\eqref{eq1:algoVMD2}.

Due to the inexact evaluations of $\P V_t$, the monotonicity property can only be satisfied approximately, i.e., $V_t\geq\Gamma_{\pi_t}V_t-\beta$ holds with some noise $\beta\in\R^{|\bS|}$. As a result, we further have
\begin{align}
    V_t\geq\Gamma_{\pi_t}V_t-\beta\geq\Gamma_{\pi_t}^2V_t-(I+\gamma\P_{\pi_t})\beta\geq\cdots\geq V^{\pi_t}-\tsum_{i=0}^{\infty}(\gamma\P_{\pi_t})^i\beta=V^{\pi_t}-(I-\gamma\P_{\pi_t})^{-1}\beta,
    \label{eq:approxmonotonebound}
\end{align}
indicating that $V_t$ is only an approximate upper bound on $V^{\pi_t}$, rather than an exact one. We will show that the bounds on $\beta$ must decrease exponentially across epochs. Therefore, for later epochs $k\in\{1,2,...,K-1\}$, we compute  their initial value estimators $V_0$ as more accurate approximations of $V^{\hat{\pi}_k}$ by solving $V_0=\langle c+\gamma\Tilde{\P}^0 V_0,\pi_0\rangle+h(\pi_0)$ using  more accurate kernel estimators $\Tilde{\P}^0$, rather than directly inheriting $V_{T_{k-1}}$ from the previous epoch $k-1$.
This linear system can be solved efficiently, for example, by using the optimal operator extrapolation method in \cite{KotsalisLanLi2022-1,LiLanPan2023}, and does not require any additional sampling once $\Tilde{\P}^0$ is defined.

Furthermore, for every state $s\in\bS$, as shown in step~\eqref{eq2:algoVMD2}, we update $V_{k+1}(s)$ by taking the smaller one of $\Tilde{V}_{k+1}(s)$ and $V_t(s)$. These component-wise comparisons preserve the monotonicity $V_0\geq V_1\geq\cdots\geq V_T$ and eliminate poor updates $\Tilde{V}_{t+1}(s)$ that are larger than $V_t(s)$, which may arise from the estimation error in $\Tilde{\P}^t$. The policies are updated accordingly, as shown in step~\eqref{eq3:algoVMD2}.

\begin{algorithm}[H]
\caption{Stochastic Value Mirror Descent (SVMD)}
\label{algo:VMD2}
\begin{algorithmic}
\State{\textbf{Input:} $\hat{\pi}_0(\cdot|s)=\hat{\pi}'_0(\cdot|s)=\mathbf{1}_{\bA}/|\bA| \ \forall s\in\bS$, $\hat{V}_0=(1+\Bar{h})/(1-\gamma)\cdot\mathbf{1}_{\bS}\in\R^{|\bS|}$, $K>0$, $\{T_k\}_{k=0}^{K-1}>0$, $\{\eta_{k,t}\}_{t=0}^{T_k-1}$ for $k=0,1,...,K-1$, $\{m_{k,1},m_{k,2}\}_{k=0}^{K-1}>0$.}
\For{$k=0,1,...,K-1$}
\State{Set $\pi_0=\hat{\pi}_k,\Tilde{\pi}_0=\hat{\pi}'_k$, $\{\eta_t\}_{t=0}^{T_k-1}=\{\eta_{k,t}\}_{t=0}^{T_k-1}$, and compute $\Tilde{\P}^0=\hat{\P}^{(m_{k,1})}$ for each $(s,a)\in\bS\times\bA$.}
\State{If $k=0$, then set $V_0=\hat{V}_k$; otherwise, compute $V_0=(I-\gamma\Tilde{\P}^0_{\pi_0})^{-1}(c_{\pi_0}+h(\pi_0))$.}
\For{$t=0,1,...,T_k-1$}
\State{If $t>0$, then compute $\Tilde{\P}^tV_t\in\R^{|\bS|\times|\bA|}$ by $(\Tilde{\P}^tV_t)_{s,a}=(\Tilde{\P}^0V_0)_{s,a}+\hat{\P}_{s,a}^{(m_{k,2})\tr}(V_t-V_0)$.}
\State{For all $s\in\bS$, update}
\State{\begin{align}
    &\Tilde{\pi}_{t+1}(\cdot|s)=\argmin_{\pi(\cdot|s)\in\Delta_{\bA}}\Big\{\eta_t\big[\big\langle c(s,\cdot)+\gamma\Tilde{\P}^t_s V_t,\pi(\cdot|s)\big\rangle+h(\pi(\cdot|s))\big]+D^{\pi}_{\Tilde{\pi}_t}(s)\Big\}. \label{eq1:algoVMD2}\\
    &V_{t+1}(s)=\min\Big\{\Tilde{V}_{t+1}(s):=\big\langle c(s,\cdot)+\gamma\Tilde{\P}^t_sV_t,\Tilde{\pi}_{t+1}(\cdot|s)\big\rangle+h(\pi_{t+1}(\cdot|s)),V_t(s)\Big\}. \label{eq2:algoVMD2} \\
    & \pi_{t+1}(\cdot|s) =\begin{cases}
    \Tilde{\pi}_{t+1}(\cdot|s), & \text{if } V_{t+1}(s)=\Tilde{V}_{t+1}(s);\\
    \pi_t(\cdot|s), & \text{otherwise}.
    \end{cases}  
    \label{eq3:algoVMD2}
\end{align}}
\EndFor
\State{Set $\hat{\pi}_{k+1}=\pi_{T_k}$ and $\hat{\pi}'_{k+1}=\Tilde{\pi}_{T_k}$.}
\EndFor
\State{\textbf{Output:} $\hat{\pi}_K$.}
\end{algorithmic}
\end{algorithm}
\vgap

Below we establish the convergence of Algorithm~\ref{algo:VMD2}. As in Section~\ref{section:deter}, we assume that $\|D_{\hat{\pi}_0}^{\pi}\|_\infty\leq D_0$ for all $\pi\in\Pi$, and we continue to use the notation $u_k$ defined in~\eqref{def:uk}.

\begin{theorem}
    For any $\delta\in(0,1)$, let $k_0=\lceil\log_2[(1+\Bar{h})/(1-\gamma)]\rceil$, and suppose that the algorithmic parameters in Algorithm~\ref{algo:VMD2} are set to
    \begin{align}
        &K=\left\lceil\log_2\left(\tfrac{1+\Bar{h}}{(1-\gamma)\epsilon}\right)\right\rceil,\quad T_k=\left\lceil\tfrac{28}{(1-\gamma)\min\{u_k,1\}}\right\rceil, \quad \eta_{k,t}=\tfrac{2^{\min\{k,k_0\}} D_0}{7\min\{u_k,1\}} \quad \text{for} \ t=0,1,...,T_k-1, \nn\\
        &m_{k,1}\geq\tfrac{[250\sqrt{2}(1+\Bar{h})]^2\ln(12K|\bS||\bA|\delta^{-1})\lceil\log_2(\epsilon^{-1})\rceil^2}{(1-\gamma)^{3}\min\{u_k,1\}^{2}}\quad\text{and}\quad m_{k,2}\geq\tfrac{(100\sqrt{2})^2\ln[4K(T_k-1)|\bS||\bA|\delta^{-1}]\lceil\log_2(\epsilon^{-1})\rceil^2}{(1-\gamma)^{2}} \nn\\
        &\text{for} \ k=0,1,...,K-1.
        \label{param:thm_stoc1}
    \end{align}
    Then with probability at least $1-\delta$, $\hat{\pi}_K$ is an $\epsilon$-optimal policy, and $\|D_{\hat{\pi}_K}^{\pi^*}\|_\infty\leq 4(1+\Bar{h})/(1-\gamma)^2$.
    \label{thm:stoc1}
\end{theorem}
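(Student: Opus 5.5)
We will mirror the deterministic argument (Lemma~\ref{lem:converge_deter}, Lemma~\ref{lem:monotone_deter}, Proposition~\ref{prop:deter}), but with two changes. First, every exact quantity $\P V_t$ is replaced by $\Tilde{\P}^tV_t$, and the resulting errors are tracked. Second, the per-epoch induction is run on a high-probability event.

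\textbf{Concentration event.} We define an event $\mathcal{E}$ on which, for every epoch $k$, $(s,a)$ and $t$, the following bounds hold. For the reference estimate,
\[
|(\Tilde{\P}^0-\P)_{s,a}V^{\hat\pi_k}|\lesssim \sqrt{\ln(\cdot)/m_{k,1}}\cdot\sigma_{s,a}(V^{\hat\pi_k})+(1+\Bar h)\ln(\cdot)/((1-\gamma)m_{k,1}),
\]
where $\sigma_{s,a}$ is the variance, obtained via Bernstein. For the increments,
\[
|(\hat{\P}^{(m_{k,2})}-\P)_{s,a}(V_t-V_0)|\lesssim\|V_t-V_0\|_\infty\sqrt{\ln(\cdot)/m_{k,2}},
\]
obtained via Hoeffding. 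We also include a bound on $(\Tilde\P^0-\P)(V_0-V^{\hat\pi_k})$. A union bound over the $\cO(K T_k|\bS||\bA|)$ events, using the logarithmic factors in~\eqref{param:thm_stoc1}, gives $\P(\mathcal{E})\ge1-\delta$. The total variance law $\sum_t\gamma^t\P_{\pi}^t\sigma(V^\pi)\lesssim(1-\gamma)^{-3}$ (or rather its square-root form, as in \cite{agarwal2020model,sidford2018near}) will be used to turn the $(1-\gamma)^{-3}$ in $m_{k,1}$ into an $\ell_\infty$ error of order $\min\{u_k,1\}(1-\gamma)/\lceil\log_2\epsilon^{-1}\rceil$ after multiplying by $(I-\gamma\P_{\pi})^{-1}$. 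Similarly, $m_{k,2}$ makes each increment error at most $c(1-\gamma)\|V_t-V_0\|_\infty/\lceil\log_2\epsilon^{-1}\rceil\le c(1-\gamma) u_k/\lceil\log_2\epsilon^{-1}\rceil$. This requires first knowing $\|V_0-V^*\|_\infty\lesssim u_k$, which is part of the induction.

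\textbf{Per-epoch analysis.} On $\mathcal{E}$, we establish a noisy analogue of Lemma~\ref{lem:monotone_deter}. The $\min$ in~\eqref{eq2:algoVMD2} gives $V_0\ge V_1\ge\cdots\ge V_{T_k}$ exactly. The three-point Lemma~\ref{lem:threepoint1}, applied with $\Tilde\P^t$ and comparator $\pi_t$, gives $V_t\ge\Gamma_{\pi_t}V_t-\beta_t$ with $\beta_t=\gamma|(\Tilde\P^t-\P)V_t|$. Then~\eqref{eq:approxmonotonebound} yields $V_t\ge V^{\pi_t}-(I-\gamma\P_{\pi_t})^{-1}\beta$. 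For the initialization $V_0=(I-\gamma\Tilde\P^0_{\pi_0})^{-1}(c_{\pi_0}+h(\pi_0))$, we bound $|V_0-V^{\hat\pi_k}|$ by the empirical-model error.

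Next we repeat Lemma~\ref{lem:converge_deter} with $\pi=\pi^*$, applied to the $\Tilde\pi$-sequence. Since $V_{t+1}\le\Tilde V_{t+1}$, we get
\[
\eta(V_{t+1}-V^*)\le D^{\pi^*}_{\Tilde\pi_t}-D^{\pi^*}_{\Tilde\pi_{t+1}}+\gamma\P_{\pi^*}(V_t-V^*)+\eta\gamma|(\Tilde\P^t-\P)_{\pi^*}V_t|.
\]
Summing over $t$, multiplying by $(I-\gamma\P_{\pi^*})^{-1}$, and using monotonicity, we obtain
\[
T_k(V_{T_k}-V^*)\le \tfrac{1}{\eta}(I-\gamma\P_{\pi^*})^{-1}D^{\pi^*}_{\Tilde\pi_0}+\tfrac{\gamma}{1-\gamma}\|V_0-V^*\|+T_k\cdot\mathrm{err}-\tfrac1\eta(I-\gamma\P_{\pi^*})^{-1}D^{\pi^*}_{\Tilde\pi_{T_k}}.
\]
A caveat: $V_T-V^*$ may be slightly negative, and this is only controlled through the noise. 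With $\eta=2^{\min\{k,k_0\}}D_0/(7\min\{u_k,1\})$ and $T_k=\lceil28/((1-\gamma)\min\{u_k,1\})\rceil$, we then obtain three things. First, $\|V_{T_k}-V^*\|_\infty\le u_{k+1}/2$ (up to the error terms). Second, $V^{\hat\pi_{k+1}}-V^*\le u_{k+1}$ after adding the $(I-\gamma\P_{\hat\pi_{k+1}})^{-1}\beta$ correction. Third, the Bregman quantity at most doubles while $k\le k_0$ and stays bounded once $u_k<1$, since $2^{\min\{k,k_0\}}$ freezes. Because $2^{k_0}D_0/(\ldots)$ is of order $(1+\Bar h)/(1-\gamma)$, unwinding the induction gives $(I-\gamma\P_{\pi^*})^{-1}D\lesssim 2^{k_0}D_0\cdot(1-\gamma)^{-1}$. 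Together with $(I-\gamma\P_{\pi^*})^{-1}\ge I$ and $2^{k_0}\le 2(1+\Bar h)/(1-\gamma)$, this yields the stated bound $4(1+\Bar h)/(1-\gamma)^2$ on $\|D_{\hat\pi_K}^{\pi^*}\|_\infty$. Note that the relevant policy is $\hat\pi'_K=\Tilde\pi_{T_k}$; we will need to relate this to $\hat\pi_K$, presumably via the $\min$ rule, or by arguing that the divergence bound holds for both sequences through the three-point inequality.

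\textbf{Main obstacle.} The hard part is showing that the accumulated estimation errors, $T_k\cdot\gamma\|(\Tilde\P^t-\P)V_t\|$ inside the sum and $(I-\gamma\P_{\pi})^{-1}\beta$ in the monotonicity bound, are $\cO(u_k)$ rather than $\cO(u_k/(1-\gamma))$. A naive Hoeffding bound on $\Tilde\P^0V_0$ loses a factor $(1-\gamma)^{-1/2}$. We will therefore center at $V^{\hat\pi_k}$ and use Bernstein together with the total-variance identity for the policy $\hat\pi_k$ (and $\pi^*$). The factor $\lceil\log_2\epsilon^{-1}\rceil^2$ in $m$ absorbs the summation of error contributions across the $\cO(\log)$ epochs. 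We expect this step, and keeping the constants $250\sqrt2$ and $100\sqrt2$ consistent with the factor $7$ in $\eta$ and $28$ in $T_k$, to be where the argument is delicate.
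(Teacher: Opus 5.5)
Your skeleton matches the paper: Bernstein for the reference $\Tilde\P^0V_0$, Hoeffding for the increments, approximate monotonicity, the summed three-point inequality, and doubling of the Bregman bound up to $k_0$. However, two load-bearing steps would fail as you describe them.

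\emph{Approximate monotonicity.} The inequality $V_t\ge\Gamma_{\pi_t}V_t-\beta_t$ does not come from the three-point lemma with comparator $\pi_t$. The prox step is centered at $\Tilde\pi_t\neq\pi_t$, so that lemma leaves an uncontrolled $D^{\pi_t}_{\Tilde\pi_t}$. It comes instead from the $\min$ and the acceptance rule~\eqref{eq3:algoVMD2} by a case analysis. There $\beta_t(s)$ is the error at the last iteration $\tau(s,t)$ at which the distribution at $s$ was accepted (Lemma~\ref{lem:induc1_stoc1}).

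The more serious problem is the slack $(I-\gamma\P_{\pi_t})^{-1}\beta_t$. Its Bernstein part is of order $\frac{(1-\gamma)^{3/2}\min\{u_k,1\}}{c(1+\Bar h)}(I-\gamma\P_{\pi_t})^{-1}\langle\sqrt{\sigma_{V^{\hat\pi_k}}},\pi_t\rangle$. Lemma~\ref{lem:total_variance} controls $(I-\gamma\P_{\pi})^{-1}\sqrt{\sigma^{\pi}_{V^{\pi}}}$ only for the \emph{same} policy in both places. So total variance for $\hat\pi_k$ or $\pi^*$ says nothing about $(I-\gamma\P_{\pi_t})^{-1}$; it is the right tool only for the $\pi^*$-paired error in Lemma~\ref{lem:converge_stoc1}. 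The trivial bound on $\sqrt{\sigma}$ loses a factor $(1-\gamma)^{-1/2}$, which the stated $m_{k,1}$ cannot absorb.

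The paper fixes this with a bootstrap, run inductively in $t$:
\begin{itemize}
\item A crude bound $w=\cO((1-\gamma)^{-1/2}/c)$ on the slack gives $V^*\le V^{\pi_t}\le V_t+w\le V_0+w$, hence $\|V^{\pi_t}-V^{\pi_0}\|_\infty\le u_k+w$.
\item Then $\sqrt{\sigma_{V^{\pi_0}}}\le\sqrt{\sigma_{V^{\pi_t}}}+\|V^{\pi_t}-V^{\pi_0}\|_\infty$, together with \eqref{law_of_total_var} and Lemma~\ref{lem:total_variance} applied to $\pi_t$, yields the sharp slack $5u_k/(2c)$ (Lemmas~\ref{lem:induc2_stoc1} and~\ref{lem:induc4_stoc1}).
\item The same induction supplies $\|V_t-V_0\|_\infty\le 2u_k$, which the Hoeffding increments need.
\end{itemize}
Also, $(\Tilde\P^0-\P)(V_0-V^{\hat\pi_k})$ cannot simply be put into a union-bound event, because $V_0$ is computed from $\Tilde\P^0$. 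The paper decouples it with leave-one-out kernels (Lemma~\ref{lem:error_pv_stoc1}(b)).

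\emph{The regime $k\ge k_0$.} Freezing the stepsize alone does not close an epoch-by-epoch induction of the type in Proposition~\ref{prop:stoc1}, where the hypothesis $D/(1-\gamma)$ yields the conclusion $2D/(1-\gamma)$. (As literally written, $\eta_{k,t}$ does not even freeze, since $\min\{u_k,1\}=u_k$ there. The propositions actually use $2^kD_0/(7u_k)$ for $k<k_0$ and $2^{k_0}D_0/7$ afterwards.) With $\eta=D/7$ and $T_k\approx 28/((1-\gamma)u_k)$, each epoch adds $\eta\,[\gamma\|V_0-V^*\|_\infty/(1-\gamma)+T_k\cdot\cO(u_k)]$ to the bound on $\|(I-\gamma\P_{\pi^*})^{-1}D^{\pi^*}_{\Tilde\pi}\|_\infty$. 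This is of order $D/(1-\gamma)$ under constant-level tolerances. Halving, however, needs the incoming bound to be at most $D/(1-\gamma)$, since the analogue of $A_1/T$ is already $u_k/4$.

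The increments must therefore be made summable and charged against one initial budget. Proposition~\ref{prop2:stoc1} does this by summing Lemma~\ref{lem:converge_stoc1} over all epochs $k_0,\dots,k'$:
\begin{itemize}
\item The Bregman terms telescope through $\Tilde\pi_{k+1,0}=\Tilde\pi_{k,T_k}$.
\item Consecutive epochs are glued by $\|V_{k+1,0}-V_{k,T_k}\|_\infty=\cO(u_{k+1}/\lceil\log_2\epsilon^{-1}\rceil)$.
\item Every tolerance uses $c=25\lceil\log_2\epsilon^{-1}\rceil$. This is the real role of the $\lceil\log_2\epsilon^{-1}\rceil^2$ factor you anticipated.
\end{itemize}
The initial $D/(1-\gamma)$ is then paid once and divided by the largest $T_{k'}$. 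The resulting bound holds for every $\Tilde\pi_{k,t}$, not just the terminal one. That is what transfers it to $\hat\pi_K$: by~\eqref{eq3:algoVMD2}, each $\hat\pi_K(\cdot|s)$ coincides with some $\Tilde\pi_{k,t}(\cdot|s)$. Your chain, like the paper's proof, gives $4(1+\Bar h)D_0/(1-\gamma)^2$, with a factor $D_0$ that the stated bound omits.
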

\vgap

In view of Theorem~\ref{thm:stoc1}, the sample complexity for Algorithm~\ref{algo:VMD2} applied to DMDP with a general convex regularizer $h$ is bounded by
\begin{align}
    \cO\Big(|\bS||\bA|\tsum_{k=0}^{K-1}\big[m_{k,1}+(T_k-1)m_{k,2}\big]\Big)=\Tilde{\cO}\big(|\bS||\bA|(1-\gamma)^{-3}\epsilon^{-2}\big),
    \label{complexity:stoc1}
\end{align}
which matches the existing lower bound up to logarithmic factors.

Moreover, compared with the variance-reduced Q-value iteration in~\cite{sidford2018near}, Algorithm~\ref{algo:VMD2} guarantees $\|D_{\hat{\pi}_K}^{\pi^*}\|_\infty\leq\cO((1-\gamma)^{-2})$, which, under specific choices of Bregman divergences such as KL or Tsallis divergence, implies that the output policy $\hat{\pi}_K(\cdot|s)$ assigns positive probability to the optimal action at every state $s\in\bS$. As a consequence, the policies generated by Algorithm~\ref{algo:VMD2} are stable and may serve as a good starting point for continual learning using various policy optimization methods.

\subsection{Convergence Analysis}

We now establish Theorem~\ref{thm:stoc1}. Our analysis proceeds by first deriving a single-epoch convergence result, and then we show that the approximate monotonicity error remains sufficiently controlled with high probability. Finally, by combining these bounds across epochs, we obtain the overall convergence guarantee.

Let us begin a stochastic counterpart of Lemma~\ref{lem:threepoint1}.

\begin{lemma}
    In any epoch $k\in\{0,1,...,K-1\}$ of Algorithm~\ref{algo:VMD2}, for any iteration $t\in\{0,1,...,T-1\}$, state $s\in\bS$ and policy $\pi(\cdot|s)\in\Delta_{\bA}$, we have
    \begin{align*}
        \eta_t\big[\big\langle c(s,\cdot)+\gamma\Tilde{\P}^t_sV_t,\Tilde{\pi}_{t+1}(\cdot|s)-\pi(\cdot|s)\big\rangle+h(\Tilde{\pi}_{t+1}(\cdot|s))-h(\pi(\cdot|s))\big]+D^{\Tilde{\pi}_{t+1}}_{\Tilde{\pi}_t}(s)\leq D^{\pi}_{\Tilde{\pi}_t}(s)-D^{\pi}_{\Tilde{\pi}_{t+1}}(s).
    \end{align*}
    \label{lem:threepoint2}
\end{lemma}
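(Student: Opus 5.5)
This is the standard three-point lemma for the prox-mapping \eqref{eq1:algoVMD2}. The plan is to repeat the argument behind Lemma~\ref{lem:threepoint1}, with the vector $g_s:=c(s,\cdot)+\gamma\Tilde{\P}^t_sV_t\in\R^{|\bA|}$ in place of $c(s,\cdot)+\gamma\P_sV_t$ and $\Tilde{\pi}_t,\Tilde{\pi}_{t+1}$ in place of $\pi_t,\pi_{t+1}$.

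The only property of the linear term that the argument uses is that it is a fixed vector once the prox step is taken. For each epoch and iteration, $\Tilde{\P}^t_sV_t$ is a fixed (random, but already realized) vector, so the argument is purely deterministic and applies pathwise. No probabilistic reasoning is needed, and in particular nothing about unbiasedness of $\Tilde{\P}^t$.

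Fix $s$ and write $x=\Tilde{\pi}_{t+1}(\cdot|s)$, $y=\Tilde{\pi}_t(\cdot|s)$ and $p=\pi(\cdot|s)$.

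First, state the first-order optimality condition of the convex problem $\min_{x'\in\Delta_{\bA}}\{\eta_t[\langle g_s,x'\rangle+h(x')]+D(y,x')\}$. There is a subgradient $h'(x)\in\partial h(x)$ such that
\[
\langle \eta_t g_s+\eta_t h'(x)+\nabla\omega(x)-\nabla\omega(y),\,p-x\rangle\geq 0
\qquad\text{for all } p\in\Delta_{\bA}.
\]
This is the one step that needs care. For divergences like KL, where $\nabla\omega$ blows up on the boundary, one must note that $x$ lies in the relative interior, or else argue via the normal cone as in \cite[Lemma 3.4]{lan2020first}. I expect to simply cite that lemma here, as the paper does for Lemma~\ref{lem:threepoint1}.

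Second, use the three-point identity
\[
\langle\nabla\omega(x)-\nabla\omega(y),p-x\rangle=D(y,p)-D(y,x)-D(x,p),
\]
that is, $D^{\pi}_{\Tilde{\pi}_t}(s)-D^{\Tilde{\pi}_{t+1}}_{\Tilde{\pi}_t}(s)-D^{\pi}_{\Tilde{\pi}_{t+1}}(s)$.

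Third, use convexity \eqref{hconvex} with $\mu\geq 0$:
\[
\langle h'(x),p-x\rangle\leq h(p)-h(x)-\mu D(x,p)\leq h(p)-h(x).
\]
Dropping the nonpositive $-\mu D$ term gives the statement with coefficient $1$ in front of $D^{\pi}_{\Tilde{\pi}_{t+1}}(s)$, which is weaker than Lemma~\ref{lem:threepoint1} and is what is claimed.

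Substituting these two relations into the optimality inequality and rearranging gives
\[
\eta_t[\langle g_s,x-p\rangle+h(x)-h(p)]+D(y,x)\leq D(y,p)-D(x,p).
\]
This is exactly the claimed inequality, and it holds for every $s\in\bS$.
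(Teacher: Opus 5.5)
Your proposal is correct and is essentially the paper's own argument. The paper simply invokes \cite[Lemma 3.4]{lan2020first}, whose proof consists of exactly your three ingredients: the optimality condition of the prox step, the three-point identity for $D$, and convexity of $h$. Here the $\mu$-term is discarded, which leaves coefficient $1$ on $D^{\pi}_{\Tilde{\pi}_{t+1}}(s)$. Your remark that the argument applies pathwise to the realized vector $c(s,\cdot)+\gamma\Tilde{\P}^t_sV_t$ is precisely why no probabilistic input is needed.
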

\begin{proof}
    Similar to the proof of Lemma~\ref{lem:threepoint1}, this proof also directly utilizes~\cite[Lemma 3.4]{lan2020first}.
\end{proof}
\vgap

Built upon Lemma~\ref{lem:threepoint2}, we derive a single-epoch result below. It plays the same role as Lemma~\ref{lem:converge_deter} in the deterministic setting, but now includes additional error terms arising from the stochastic setting.

To facilitate our analysis, we introduce several notations. For any epoch $k\in\{0,1,...,K-1\}$ of Algorithm~\ref{algo:VMD2}, we define the matrices $Q^t,\Tilde{Q}^t\in\R^{|\bS|\times|\bA|},t=0,1,...,T-1$, with their row vectors $Q^t(s,\cdot):=c(s,\cdot)+\gamma\P_sV_t\in\R^{|\bA|}$ and $\Tilde{Q}^t(s,\cdot):=c(s,\cdot)+\gamma\Tilde{\P}_s^tV_t\in\R^{|\bA|}$ for all $s\in\bS$. For any matrix $Q\in\R^{|\bS|\times|\bA|}$ with row vectors $Q(s,\cdot)\in\R^{|\bA|}$ and any policy $\pi$, we define $\langle Q,\pi\rangle\in\R^{|\bS|}$ such that each of its entries $(\langle Q,\pi\rangle)(s):=\langle Q(s,\cdot),\pi(\cdot|s)\rangle$.

\begin{lemma}
    In any epoch $k\in\{0,1,...,K-1\}$ of Algorithm~\ref{algo:VMD2}, for any $\Bar{t}\in\{0,1,...,T-1\}$, taking $\eta_t=\eta>0$ for $t=0,1,...,\Bar{t}$, we have
    \begin{align}
        (V_{\Bar{t}+1}-V^*)+\tsum_{t=1}^{\Bar{t}}(I-\gamma\P_{\pi^*})(V_t-V^*)+\tfrac{1}{\eta}D_{\Tilde{\pi}_{\Bar{t}+1}}^{\pi^*}\leq \tfrac{1}{\eta}D_{\pi_0}^{\pi^*}+\gamma\P_{\pi^*}(V_0-V^*)+\tsum_{t=0}^{\Bar{t}}\big\langle\Tilde{Q}^t-Q^t,\pi^*\big\rangle.
        \label{res:lem3p2}
    \end{align}
    \label{lem:converge_stoc1}
\end{lemma}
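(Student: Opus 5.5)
The argument mirrors the proof of Lemma~\ref{lem:converge_deter}, with two changes: the true kernel is replaced by $\Tilde{\P}^t$, and the truncation step~\eqref{eq2:algoVMD2} has to be handled. I will derive a one-step inequality for each $t\in\{0,\dots,\Bar{t}\}$ and then sum over $t$.

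Fix $s\in\bS$ and $t\le\Bar{t}$. I apply Lemma~\ref{lem:threepoint2} with $\pi=\pi^*$ and $\eta_t=\eta$, and drop the nonnegative term $D^{\Tilde{\pi}_{t+1}}_{\Tilde{\pi}_t}(s)$. The left-hand bracket contains $\langle\Tilde{Q}^t(s,\cdot),\Tilde{\pi}_{t+1}(\cdot|s)\rangle+h(\Tilde{\pi}_{t+1}(\cdot|s))$, which is exactly $\Tilde{V}_{t+1}(s)$. Here I read the $h(\pi_{t+1})$ in~\eqref{eq2:algoVMD2} as $h(\Tilde{\pi}_{t+1})$, which is clearly the intent. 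Since $V_{t+1}(s)=\min\{\Tilde{V}_{t+1}(s),V_t(s)\}\le\Tilde{V}_{t+1}(s)$, this bracket is bounded below by $V_{t+1}(s)$. This is the only new ingredient compared with the deterministic case, and the direction of the inequality is favorable.

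For the comparison term I use the analogue of~\eqref{eq2:lem_converge_deter}:
\[
\langle\Tilde{Q}^t(s,\cdot),\pi^*(\cdot|s)\rangle+h(\pi^*(\cdot|s))=\langle Q^t(s,\cdot),\pi^*(\cdot|s)\rangle+h(\pi^*(\cdot|s))+\langle\Tilde{Q}^t-Q^t,\pi^*\rangle(s),
\]
together with $\langle Q^t(s,\cdot),\pi^*(\cdot|s)\rangle+h(\pi^*(\cdot|s))=V^*(s)+\gamma(\P_{\pi^*})_s(V_t-V^*)$, which follows from the Bellman equation $V^*=c_{\pi^*}+\gamma\P_{\pi^*}V^*+h(\pi^*)$. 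Collecting terms over all states gives the vector inequality
\[
\eta(V_{t+1}-V^*)\le D^{\pi^*}_{\Tilde{\pi}_t}-D^{\pi^*}_{\Tilde{\pi}_{t+1}}+\eta\gamma\P_{\pi^*}(V_t-V^*)+\eta\langle\Tilde{Q}^t-Q^t,\pi^*\rangle .
\]

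Next I divide by $\eta$ and sum over $t=0,\dots,\Bar{t}$. The Bregman terms telescope to $\tfrac1\eta(D^{\pi^*}_{\Tilde{\pi}_0}-D^{\pi^*}_{\Tilde{\pi}_{\Bar{t}+1}})$. The value terms rearrange as in Lemma~\ref{lem:converge_deter}: the left side $\tsum_{t=1}^{\Bar{t}+1}(V_t-V^*)$ minus $\tsum_{t=1}^{\Bar{t}}\gamma\P_{\pi^*}(V_t-V^*)$ equals $(V_{\Bar{t}+1}-V^*)+\tsum_{t=1}^{\Bar{t}}(I-\gamma\P_{\pi^*})(V_t-V^*)$. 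What remains on the right is $\gamma\P_{\pi^*}(V_0-V^*)$ plus the accumulated errors $\tsum_{t=0}^{\Bar{t}}\langle\Tilde{Q}^t-Q^t,\pi^*\rangle$.

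The main obstacle is bookkeeping rather than analysis, and it concerns the starting point of the telescoping. The statement has $D^{\pi^*}_{\pi_0}$, whereas the mirror step is anchored at $\Tilde{\pi}_0=\hat{\pi}'_k$. I will need to check whether $\pi_0$ and $\Tilde{\pi}_0$ coincide. This holds at $k=0$, and more generally whenever the epoch initialization makes them equal. Otherwise I would interpret the $D^{\pi^*}_{\pi_0}$ term as $D^{\pi^*}_{\Tilde{\pi}_0}$, which is the quantity that actually arises from the telescoping.

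Apart from this, the only care needed is to verify the sign of the truncation step and to confirm that the error terms carry exactly the factor $\gamma$ absorbed in the definition of $\Tilde{Q}^t$.
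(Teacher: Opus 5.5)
Your proposal is correct and follows essentially the same route as the paper's proof. Both apply Lemma~\ref{lem:threepoint2} with $\pi=\pi^*$, use $V_{t+1}\le\Tilde{V}_{t+1}$ (with $h(\Tilde{\pi}_{t+1})$, exactly as the paper reads it), rewrite $\langle\Tilde{Q}^t,\pi^*\rangle+h(\pi^*)$ via the Bellman equation for $V^*$, and telescope. Your caveat about the anchor is also well founded: the paper closes by invoking $\pi_0=\Tilde{\pi}_0$, but this is only guaranteed at $k=0$, since in general $\hat{\pi}_{k+1}=\pi_{T_k}\neq\Tilde{\pi}_{T_k}=\hat{\pi}'_{k+1}$. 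Its later use in Proposition~\ref{prop:stoc1} in fact works with $D^{\pi^*}_{\Tilde{\pi}_0}$, so your reading of the statement is the right one.
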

\begin{proof}
    Setting $\pi=\pi^*$ and $\eta_t=\eta$ in Lemma~\ref{lem:threepoint2}, for any $t\in\{0,1,...,T-1\}$ and $s\in\bS$, we have
    \begin{align}
        \eta\big[\big\langle c(s,\cdot)+\gamma\Tilde{\P}_sV_t,\Tilde{\pi}_{t+1}(\cdot|s)-\pi^*(\cdot|s)\big\rangle+h(\Tilde{\pi}_{t+1}(\cdot|s))-h(\pi^*(\cdot|s))\big]+D^{\Tilde{\pi}_{t+1}}_{\Tilde{\pi}_t}(s)\leq D^{\pi^*}_{\Tilde{\pi}_t}(s)-D^{\pi^*}_{\Tilde{\pi}_{t+1}}(s).
        \label{eq1:lem_converge_stoc1}
    \end{align}
    Since $V_{t+1}(s)\leq\langle c(s,\cdot)+\gamma\Tilde{\P}_s^tV_t,\Tilde{\pi}_{t+1}(\cdot|s)\rangle+h(\Tilde{\pi}_{t+1}(\cdot|s))$ is satisfied by~\eqref{eq1:algoVMD2} and $\langle c(s,\cdot)+\gamma\Tilde{\P}_s^tV_t,\pi^*(\cdot|s)\rangle=\langle \Tilde{Q}^t(s,\cdot),\pi^*(\cdot|s)\rangle$ holds by definition, then~\eqref{eq1:lem_converge_stoc1} becomes
    \begin{align}
        \eta V_{t+1}(s)+D^{\pi_{t+1}}_{\Tilde{\pi}_t}(s)\leq D^{\pi^*}_{\Tilde{\pi}_t}(s)-D^{\pi^*}_{\Tilde{\pi}_{t+1}}(s)+\eta\big[\big\langle \Tilde{Q}^t(s,\cdot),\pi^*(\cdot|s)\big\rangle+h(\pi^*(\cdot|s))\big].
        \label{eq2:lem_converge_stoc1}
    \end{align}
    Notice that we have
    \begin{align}
        &\big\langle Q^t(s,\cdot),\pi^*(\cdot|s)\big\rangle+h(\pi^*(\cdot|s))=\big\langle c(s,\cdot)+\gamma\P_sV_t,\pi^*(\cdot|s)\big\rangle+h(\pi^*(\cdot|s)) \nn\\
        &=V^*(s)+\gamma(\P_{\pi^*})_s(V_t-V^*),
        \label{eq3:lem_converge_stoc1}
    \end{align}
    where 
     the second equality follows from \eqnok{eq2:lem_converge_deter}.
    Hence, it follows from~\eqref{eq2:lem_converge_stoc1} and~\eqref{eq3:lem_converge_stoc1} that
    \begin{align}
        V_{t+1}(s)-V^*(s)+\tfrac{1}{\eta}D^{\Tilde{\pi}_{t+1}}_{\Tilde{\pi}_t}(s)\leq \tfrac{1}{\eta}D^{\pi^*}_{\Tilde{\pi}_t}(s)-\tfrac{1}{\eta}D^{\pi^*}_{\Tilde{\pi}_{t+1}}(s)+\big\langle \Tilde{Q}^t(s,\cdot)-Q^t(s,\cdot),\pi^*(\cdot|s)\big\rangle+\gamma(\P_{\pi^*})_s(V_t-V^*).
        \label{eq4:lem_converge_stoc1}
    \end{align}
    Since~\eqref{eq4:lem_converge_stoc1} holds for every $s\in\bS$, then we have
    \begin{align}
        V_{t+1}-V^*+\tfrac{1}{\eta}D^{\Tilde{\pi}_{t+1}}_{\Tilde{\pi}_t}\leq \tfrac{1}{\eta}D^{\pi^*}_{\Tilde{\pi}_t}-\tfrac{1}{\eta}D^{\pi^*}_{\Tilde{\pi}_{t+1}}+\big\langle \Tilde{Q}^t-Q^t,\pi^*\big\rangle+\gamma\P_{\pi^*}(V_t-V^*),
        \label{eq5:lem_converge_stoc1}
    \end{align}
    and summing up~\eqref{eq5:lem_converge_stoc1} for $t=0,1,...,\Bar{t}$ together with $\pi_0=\Tilde{\pi}_0$ yields~\eqref{res:lem3p2}.
\end{proof}
\vgap

Similar to Lemma~\ref{lem:converge_deter}, for each epoch $k\in\{0,1,...,K-1\}$ of Algorithm~\ref{algo:VMD2}, Lemma~\ref{lem:converge_stoc1} provides an upper bound for the cumulative value gaps and the Bregman divergences. However, unlike in Section~\ref{section:deter}, this bound alone is not sufficient to control the policy value gap $V^{\pi_t}-V^*$, because the exact monotonicity relation $V_t\geq\Gamma_{\pi_t}V_t$ is no longer available. Therefore, our next goal is to show that $V_t$ remains an approximate upper bound on $V^{\pi_t}$ with high probability throughout the epoch.

Moreover, compared with Lemma~\ref{lem:converge_deter}, we observe that the right-hand side of~\eqref{res:lem3p2} with $\Bar{t}=T-1$ in Lemma~\ref{lem:converge_stoc1} contains an additional term $\tsum_{t=0}^{T-1}\langle \Tilde{Q}^t-Q^t,\pi^*\rangle$, which stems from the cumulative estimation errors of $\Tilde{\P}^tV_t$. More specifically, for any $t\in\{0,1,...,T-1\}$ and $s\in\bS$, the error can be written as
\begin{align}
    &\big\langle \Tilde{Q}^t(s,\cdot)-Q^t(s,\cdot),\pi^*(\cdot|s)\big\rangle=\gamma\big\langle\Tilde{\P}_s^tV_t-\P_sV_t,\pi^*(\cdot|s)\big\rangle \nonumber\\
    &=\gamma\big\langle\big(\underbrace{\Tilde{\P}_s^0V_0-\P_sV_0}_A\big)+\big[\underbrace{\hat{\P}_s^{(m_{k,2})}(V_t-V_0)-\P_s(V_t-V_0)}_B\big],\pi^*(\cdot|s)\big\rangle.
    \label{error_decompose_stoc1}
\end{align}
Note that the terms $A$ and $B$ in~\eqref{error_decompose_stoc1} occur from independent estimation processes in Algorithm~\ref{algo:VMD2}. In order to bound $A$ and $B$ respectively with high probability, we utilize the standard concentration inequalities below, which have also been shown in~\cite{hoeffding1963probability,massart2007concentration} and used in~\cite[Theorem E.1, E.2]{sidford2018near}.

\begin{lemma}
    For any $(s,a)\in\bS\times \bA$, transition probability vector $\P_{s,a}\in\Delta_{\bS}$, and vector $V\in\R^{|\bS|}$, let $\hat{\P}^{(m)}_{s,a}\in\Delta_{\bS}$ be the estimation of $\P_{s,a}$ through $m$ independent observations of one-step state transition  $(s_1,s_2,...,s_m)$ starting from $(s,a)$ (see~\eqref{approx_trans_kernel}), then we have
    \begin{itemize}
        \item [(a)] For any $\delta\in(0,1)$,
        \begin{align}
            \big|\hat{\P}_{s,a}^{(m)\tr} V-\P_{s,a}^{\tr} V\big|\leq\|V\|_\infty\sqrt{2m^{-1}\ln(2\delta^{-1})}
            \label{hoeffding1}
        \end{align}
        holds with probability at least $1-\delta$.
        \item [(b)] For any $\delta\in(0,1)$, let $\mathrm{Var}_{s'\sim \P_{s,a}}(V(s'))=\P_{s,a}^\tr (V)^2-(\P_{s,a}^\tr V)^2$, then
        \begin{align}
            \big|\hat{\P}_{s,a}^{(m)\tr} V-\P_{s,a}^{\tr} V\big|\leq\sqrt{2m^{-1}\mathrm{Var}_{s'\sim \P_{s,a}}(V(s'))\ln(2\delta^{-1})}+(2/3)\|V\|_\infty m^{-1}\ln(2\delta^{-1})
            \label{bernstein1}
        \end{align}
        holds with probability at least $1-\delta$.
    \end{itemize}
    \label{lem:concentration_bounds}
\end{lemma}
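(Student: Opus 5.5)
Both parts are standard concentration inequalities for an empirical mean of i.i.d. bounded random variables, and I would reduce the statement to exactly that. Fix $(s,a)$ and $V$, and let $s_1,\dots,s_m$ be the i.i.d.\ next states drawn from $\P_{s,a}$. By~\eqref{approx_trans_kernel}, $\hat{\P}^{(m)\tr}_{s,a}V=\tfrac{1}{m}\tsum_{i=1}^m V(s_i)$. Define $X_i:=V(s_i)-\P_{s,a}^\tr V$. These are i.i.d.\ with $\E X_i=0$ and $\mathrm{Var}(X_i)=\mathrm{Var}_{s'\sim\P_{s,a}}(V(s'))$, and the quantity to bound is $|\tfrac1m\tsum_i X_i|$.

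For part (a), I would use Hoeffding's inequality. Each $V(s_i)$ lies in $[-\|V\|_\infty,\|V\|_\infty]$, an interval of length $2\|V\|_\infty$. The two-sided Hoeffding bound gives
\[
\P\big(|\tfrac1m\tsum_i X_i|\geq\tau\big)\leq 2\exp\big(-2m\tau^2/(2\|V\|_\infty)^2\big)=2\exp\big(-m\tau^2/(2\|V\|_\infty^2)\big).
\]
Setting the right-hand side equal to $\delta$ yields $\tau=\|V\|_\infty\sqrt{2m^{-1}\ln(2\delta^{-1})}$, which is exactly~\eqref{hoeffding1}.

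For part (b), I would use Bernstein's inequality. Since $|X_i|\leq 2\|V\|_\infty$ almost surely, the two-sided Bernstein bound with variance $\sigma^2=\mathrm{Var}(V(s'))$ and almost-sure bound $b$ on $|X_i|$ gives
\[
\P\big(|\tfrac1m\tsum_i X_i|\geq \sqrt{2\sigma^2 m^{-1}\ln(2/\delta)}+\tfrac{b}{3}m^{-1}\ln(2/\delta)\big)\leq\delta.
\]
To reach the constant $2/3$ in~\eqref{bernstein1}, I need $b=2\|V\|_\infty$, and this is the only place that requires care. The centered variable $X_i$ is at most $2\|V\|_\infty$ in absolute value, so taking $b=2\|V\|_\infty$ gives $(b/3)=(2/3)\|V\|_\infty$, matching the stated bound.

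The inversion that produces this form of the bound goes as follows. Bernstein's tail is $2\exp(-m\tau^2/(2\sigma^2+2b\tau/3))$. Setting it equal to $\delta$ and solving the resulting quadratic in $\tau$, then using $\sqrt{x+y}\leq\sqrt x+\sqrt y$, gives the additive form above.

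The main obstacle is therefore only bookkeeping of constants (range $2\|V\|_\infty$ and the quadratic inversion). Alternatively, both parts can simply be cited from~\cite{hoeffding1963probability,massart2007concentration}, as in~\cite[Theorem E.1, E.2]{sidford2018near}.
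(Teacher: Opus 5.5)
Your overall strategy is the paper's. The paper simply invokes Hoeffding's and Bernstein's inequalities, citing \cite{hoeffding1963probability,massart2007concentration} and \cite[Theorem E.1, E.2]{sidford2018near}, and your Hoeffding computation for part (a) is correct. The problem is the step in part (b) that you yourself flag as delicate.

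Write $L=\ln(2\delta^{-1})$. Setting $2\exp\bigl(-m\tau^2/(2\sigma^2+2b\tau/3)\bigr)=\delta$ gives the quadratic $m\tau^2-\tfrac{2bL}{3}\tau-2\sigma^2L=0$. Its positive root is $\tau^\ast=\tfrac{bL}{3m}+\sqrt{\tfrac{b^2L^2}{9m^2}+\tfrac{2\sigma^2L}{m}}$. Applying $\sqrt{x+y}\le\sqrt{x}+\sqrt{y}$ yields $\tau^\ast\le\sqrt{2\sigma^2L/m}+\tfrac{2bL}{3m}$, not $\sqrt{2\sigma^2L/m}+\tfrac{bL}{3m}$. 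In fact $\tau^\ast$ is strictly larger than the latter whenever $b>0$, so the classical tail cannot certify your displayed bound at level $\delta$. With $b=2\|V\|_\infty$, your derivation therefore puts the constant $4/3$ in front of $\|V\|_\infty m^{-1}\ln(2\delta^{-1})$, not the $2/3$ required in~\eqref{bernstein1}.

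The missing ingredient is the sharper, sub-gamma form of Bernstein's inequality, which is the version in \cite{massart2007concentration}. Take centered $X_i\le b$ with variance $\sigma^2$ and $0\le\lambda<3/b$. Bennett's bound gives $\ln\E e^{\lambda X_i}\le\sigma^2(e^{\lambda b}-\lambda b-1)/b^2\le\tfrac{\sigma^2\lambda^2}{2(1-\lambda b/3)}$. Hence $\tsum_i X_i$ is sub-gamma with variance factor $m\sigma^2$ and scale $b/3$, and the Cram\'er--Chernoff method gives $\Pr\bigl(\tsum_i X_i\ge\sqrt{2m\sigma^2x}+bx/3\bigr)\le e^{-x}$. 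Apply this to both $X_i$ and $-X_i$; each is bounded above by $\max_{s'}V(s')-\min_{s'}V(s')\le 2\|V\|_\infty$. Taking $x=\ln(2\delta^{-1})$ and dividing by $m$ gives exactly~\eqref{bernstein1}. So the inequality you state is true, but it must be justified by this form (or by the citation, as the paper does), not by inverting the classical tail.
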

\vgap

Note that~\eqref{hoeffding1} and~\eqref{bernstein1} in Lemma~\ref{lem:concentration_bounds} follow directly from Hoeffding's inequality and Bernstein's inequality, respectively. For notational simplicity, we define $\sigma_V\in\R^{|\bS|\times|\bA|}$ as the one-step variance matrix associated with a value vector $V\in\R^{|\bS|}$, with entries $\sigma_V(s,a):=\P_{s,a}^\tr (V)^2-(\P_{s,a}^\tr V)^2$. Moreover, for any policy $\pi$, we define $\sigma_V^{\pi}:=\P_{\pi}(V)^2-(\P_{\pi}V)^2\in\R^{|\bS|}$ as the one-step variance vector associated with $V\in\R^{|\bS|}$ and $\pi$. By the law of total variance, the following relation between $\sigma_V(s,\cdot)$ and $\sigma_V^{\pi}(s)$ holds for any $s\in\bS$:
\begin{align}
    \sigma_V^{\pi}(s)=\mathrm{Var}_{s'\sim\P_{s,a},a\sim\pi(\cdot|s)}(V(s'))\geq\E_{a\sim\pi(\cdot|s)}\left[\mathrm{Var}_{s'\sim\P_{s,a}}(V(s')\mid a)\right]=\left\langle\sigma_V(s,\cdot),\pi(\cdot|s)\right\rangle.
    \label{law_of_total_var_entry}
\end{align}
Next, we define $\langle\sigma_V,\pi\rangle\in\R^{|\bS|}$ and $\langle\sqrt{\sigma_V},\pi\rangle\in\R^{|\bS|}$ by $(\langle\sigma_V,\pi\rangle)(s)=\langle\sigma_V(s,\cdot),\pi(\cdot|s)\rangle$ and $(\langle\sqrt{\sigma_V},\pi\rangle)(s)=\langle\sqrt{\sigma_V}(s,\cdot),\pi(\cdot|s)\rangle$ for all $s\in\bS$, respectively, and let $\sqrt{\sigma_V^{\pi}}$ denote the element-wise square root of $\sigma_V^{\pi}$. Then, it follows from~\eqref{law_of_total_var_entry} that
\begin{align}
    \sigma_V^{\pi}\geq\left\langle\sigma_V,\pi\right\rangle \qquad \text{and} \qquad \sqrt{\sigma_V^{\pi}}\geq\left\langle\sqrt{\sigma_V},\pi\right\rangle,
    \label{law_of_total_var}
\end{align}
where the second inequality follows from the first one by Jensen's inequality.

We also develop the following total variance bound, which is similar to~\cite[Lemma 8]{gheshlaghi2013minimax} and~\cite[Lemma C.1]{sidford2018near} and will be useful in our later analysis. Unlike those works, where $\pi$ is restricted to deterministic stationary policies, our $\pi$ may represent an arbitrary randomized stationary policy. Moreover, our result applies to the regularized setting. i.e., $h\neq 0$. The proof is deferred to the appendix.

\begin{lemma}
    For any policy $\pi$, we have
    \begin{align*}
        \big\|(I-\gamma\P_{\pi})^{-1}\sqrt{\sigma_{V^{\pi}}^\pi}\big\|_\infty^2\leq\tfrac{(1+\Bar{h})^2(1+\gamma)}{\gamma^2(1-\gamma)^3},
    \end{align*}
    \label{lem:total_variance}
    where $V^{\pi}$ is the state-value function defined in~\eqref{def:Vpi}.
\end{lemma}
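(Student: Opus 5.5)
I would follow the classical total-variance argument of Azar et al.\ (their Lemma 8), adapted to randomized policies and to the regularized cost $c_\pi+h(\pi)$. Write $V=V^{\pi}$, $r:=c_\pi+h(\pi)$ (so $0\le r\le (1+\Bar{h})\mathbf{1}_{\bS}$), $P:=\P_\pi$ and $\sigma:=\sigma^{\pi}_{V}=P V^2-(PV)^2$.

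\emph{Step 1: a Bellman equation for the variance.} Introduce $\Sigma(s)$, the variance of the discounted return $\tsum_t\gamma^t r(s_t)$ starting from $s$ along the chain induced by $\pi$. Here the per-step payoff $r(s_t)$ depends only on the state, because actions have been averaged into $r$ and $P$. Conditioning on $s_1$ and applying the law of total variance gives
\[
\Sigma=\gamma^2\sigma+\gamma^2P\Sigma .
\]
The first term uses $\mathrm{Var}(r(s_0)+\gamma V(s_1)\mid s_0)=\gamma^2\sigma(s_0)$, and the second uses the conditional variance of the tail. Hence $\Sigma=\gamma^2(I-\gamma^2P)^{-1}\sigma$. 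Since the return lies in $[0,(1+\Bar{h})/(1-\gamma)]$, we get $\|\Sigma\|_\infty\le (1+\Bar{h})^2/(1-\gamma)^2$, and therefore
\[
\|(I-\gamma^2P)^{-1}\sigma\|_\infty\le \frac{(1+\Bar{h})^2}{\gamma^2(1-\gamma)^2}.
\]
Alternatively, to avoid probabilistic language, I would verify the identity $(I-\gamma^2P)^{-1}\sigma$ algebraically. Expanding $\sigma=PV^2-(PV)^2$ and using $PV=(V-r)/\gamma$ yields an explicit expression, and one then bounds it directly by $\|V\|_\infty^2/\gamma^2$-type quantities.

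\emph{Step 2: pass from $\sigma$ to $\sqrt{\sigma}$ and from $\gamma^2$ to $\gamma$.} Write $(I-\gamma P)^{-1}=\tsum_i\gamma^iP^i$ and, for each state $s$, apply Cauchy--Schwarz/Jensen. Each row of $(1-\gamma)(I-\gamma P)^{-1}$ is a probability distribution, so
\[
\big((I-\gamma P)^{-1}\sqrt{\sigma}\big)(s)^2\le \tfrac{1}{1-\gamma}\big((I-\gamma P)^{-1}\sigma\big)(s).
\]
Then compare $(I-\gamma P)^{-1}$ with $(I-\gamma^2P)^{-1}$ via
\[
(I-\gamma P)^{-1}=(I-\gamma^2P)^{-1}(I-\gamma^2 P)(I-\gamma P)^{-1}
=(I-\gamma^2P)^{-1}\big[I+(\gamma-\gamma^2)P(I-\gamma P)^{-1}\big].
\]
The bracket is a nonnegative operator with $\infty$-norm at most $1+\gamma(1-\gamma)/(1-\gamma)=1+\gamma$. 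Because every factor is monotone (entrywise nonnegative), this gives
\[
\|(I-\gamma P)^{-1}\sigma\|_\infty\le(1+\gamma)\|(I-\gamma^2P)^{-1}\sigma\|_\infty .
\]

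\emph{Step 3: combine.} Chaining Step 2 with Step 1 gives
\[
\|(I-\gamma P)^{-1}\sqrt{\sigma}\|_\infty^2\le \frac{1+\gamma}{1-\gamma}\cdot\frac{(1+\Bar{h})^2}{\gamma^2(1-\gamma)^2},
\]
which is exactly the claimed bound.

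The main obstacle is Step 1: making the variance recursion rigorous for a randomized policy with regularization, and keeping the $\gamma^2$ factor. The key observation is that $\sigma^\pi_V$ is the variance over $s'\sim P(s,\cdot)$ with actions already marginalized. This is why we work with the state-level chain $P=\P_\pi$ and the state-only reward $r$, rather than with $\sigma_V(s,a)$. With that choice, the deterministic-policy proof goes through verbatim.
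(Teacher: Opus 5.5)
Your proof is correct and has the same skeleton as the paper's. A variance Bellman equation gives $\gamma^2(I-\gamma^2\P_\pi)^{-1}\sigma^\pi_{V^\pi}=\Sigma\le(1+\Bar{h})^2(1-\gamma)^{-2}\cdot\mathbf{1}_{\bS}$. This is followed by the comparison $\|(I-\gamma\P_\pi)^{-1}\sqrt{\sigma}\|_\infty^2\le\frac{1+\gamma}{1-\gamma}\|(I-\gamma^2\P_\pi)^{-1}\sigma\|_\infty$. The paper imports that comparison as Lemma C.3 of \cite{sidford2018near}; you prove it inline via Jensen on the rows of $(1-\gamma)(I-\gamma\P_\pi)^{-1}$ and the resolvent identity.

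The difference that matters is how you define $\Sigma$. You take the variance of the return $G=\tsum_t\gamma^t r(s_t)$ with the action-averaged reward $r=c_\pi+h(\pi)$ along the state chain $\P_\pi$. The paper instead uses the action-dependent cost $c(s_t,a_t)$. With the paper's definition, conditioning on $(a_0,s_1)$ actually gives $\Sigma^\pi=\gamma^2\sigma^\pi_{V^\pi}+\gamma^2\P_\pi\Sigma^\pi+e$, where $e(s)=\mathrm{Var}_{a\sim\pi(\cdot|s)}(c(s,a))+2\gamma\,\mathrm{Cov}_{a\sim\pi(\cdot|s)}(c(s,a),\P_{s,a}^{\tr}V^\pi)$. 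This term vanishes for deterministic policies but can be negative for randomized ones. As written, the paper's expansion drops $e$ by pulling $c(s,a)$ outside the sum over $a$. It then uses $c(s,a)+h(\pi(\cdot|s))+\gamma(\P_\pi V^\pi)(s)=V^\pi(s)$, which holds only with $c_\pi(s)$ in place of $c(s,a)$. Your state-only reward makes $\E[G\mid s_0,s_1]=r(s_0)+\gamma V(s_1)$ exact. So your Step 1 is the rigorous form of the paper's argument, precisely in the randomized-policy setting the paper advertises as its extension.

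One small repair is needed in Step 2. Write $B:=I+(\gamma-\gamma^2)\P_\pi(I-\gamma\P_\pi)^{-1}$, placed to the right of $(I-\gamma^2\P_\pi)^{-1}$ as in your factorization. Entrywise monotonicity alone does not then give $\|(I-\gamma\P_\pi)^{-1}\sigma\|_\infty\le(1+\gamma)\|(I-\gamma^2\P_\pi)^{-1}\sigma\|_\infty$, because $B\sigma\le(1+\gamma)\sigma$ fails entrywise in general. What makes it work is that $B$ and $(I-\gamma^2\P_\pi)^{-1}$ commute, since both are power series in $\P_\pi$. Hence $(I-\gamma\P_\pi)^{-1}\sigma=B(I-\gamma^2\P_\pi)^{-1}\sigma$, and $\|B\|_\infty\le 1+\gamma$ finishes the step.
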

\vgap

Lemma~\ref{lem:total_variance} provides an upper bound for $\|(I-\gamma\P_{\pi})^{-1}\sqrt{\sigma_{V^{\pi}}^\pi}\|_\infty^2$ in $\cO((1-\gamma)^{-3})$, which improves upon the $\cO((1-\gamma)^{-4})$ bound obtained by directly using $\|(I-\gamma\P_\pi)^{-1}\|_\infty\leq(1-\gamma)^{-1}$ and $\|\sigma_{V^{\pi}}^\pi\|_\infty\leq\|V^{\pi}\|_\infty^2\leq (1+\Bar{h})^2(1-\gamma)^{-2}$. To clarify why Lemma~\ref{lem:total_variance} is useful in the convergence analysis, we first compare the two bounds~\eqref{hoeffding1} and~\eqref{bernstein1} in Lemma~\ref{lem:concentration_bounds}. When $m$ is large, the right-hand side of~\eqref{bernstein1} is dominated by its first term. Moreover, since $\sqrt{\mathrm{Var}_{s'\sim \P_{s,a}}(V(s'))}\leq\|V\|_\infty$, then both~\eqref{hoeffding1} and~\eqref{bernstein1} yields bounds of order $\cO(\|V\|_\infty m^{-1/2})$. However, by combining Lemma~\ref{lem:total_variance} with Bernstein's inequality~\eqref{bernstein1}, we are able to derive sharper bounds than those obtained by directly applying Hoeffding's inequality~\eqref{hoeffding1} for several error terms appearing later in the analysis.

Now we bound the error terms $A$ and $B$ in~\eqref{error_decompose_stoc1}, respectively, in the following lemma.

\begin{lemma}
    \begin{itemize}
        \item [(a)] For epoch $k=0$ of Algorithm~\ref{algo:VMD2} and any $\delta\in(0,1)$, the following relation
        \begin{align}
            \big|\Tilde{\P}_{s,a}^{0\tr}V_0-\P_{s,a}^\tr V_0\big|\leq \sqrt{2}m_{0,1}^{-\tfrac{1}{2}}\sqrt{\sigma_{V_0}(s,a)}\sqrt{\ln(2|\bS||\bA|\delta^{-1})}+\tfrac{2(1+\Bar{h})}{3}m_{0,1}^{-1}(1-\gamma)^{-1}\ln(2|\bS||\bA|\delta^{-1})
            \label{error_pv0_stoc1}
        \end{align}
        is satisfied for all $(s,a)\in\bS\times\bA$ with probability at least $1-\delta$.
        \item [(b)] For any epoch $k\in\{1,2,...,K-1\}$ of Algorithm~\ref{algo:VMD2}, $\delta\in(0,1)$ and $u\in(0,(1+\Bar{h})/(1-\gamma)]$, when
        \begin{align}
            m_{k,1}\geq \max\big\{200(1+\Bar{h})^2u^{-2},32(1-\gamma)\big\}(1-\gamma)^{-3}\ln(6|\bS||\bA|\delta^{-1})
            \label{m_error_pvki_stoc1}
        \end{align}
        holds, we have
        \begin{align}
            &\big|\Tilde{\P}_{s,a}^{0\tr}V_0-\P_{s,a}^\tr V_0\big|\leq \tfrac{1}{2}\Big[\tfrac{(1-\gamma)^{1/2}}{1+\Bar{h}}\sqrt{\sigma_{V^{\pi_0}}(s,a)}+1\Big](1-\gamma)u \quad \forall (s,a)\in\bS\times\bA \quad\text{and}\quad \big\|V_0-V^{\pi_0}\big\|_\infty\leq u
            \label{error_pvki_stoc1}
        \end{align}
        with probability at least $1-\delta$.
        \item [(c)] For any $\delta\in(0,1)$ and any epoch $k\in\{0,1,...,K-1\}$ of Algorithm~\ref{algo:VMD2}, in any iteration $t\in\{1,2,...,T-1\}$,
        \begin{align}
            \big|\hat{\P}_{s,a}^{(m_{k,2})\tr}(V_t-V_0)-\P_{s,a}^\tr (V_t-V_0)\big|\leq\sqrt{2}m_{k,2}^{-\tfrac{1}{2}}\|V_t-V_0\|_\infty\sqrt{\ln(2|\bS||\bA|\delta^{-1})}
            \label{error_pvkv0_stoc1}
        \end{align}
        is satisfied for all $(s,a)\in\bS\times\bA$ with probability at least $1-\delta$.
    \end{itemize}
    \label{lem:error_pv_stoc1}
\end{lemma}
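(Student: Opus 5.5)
Parts (a) and (c) are direct applications of Lemma~\ref{lem:concentration_bounds} together with a union bound over the $|\bS||\bA|$ state-action pairs. Part (b) additionally needs a perturbation argument for the linear system defining $V_0$.

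For (a): in epoch $k=0$, $V_0=\hat{V}_0$ is deterministic and satisfies $\|V_0\|_\infty=(1+\Bar{h})/(1-\gamma)$. Apply Bernstein's bound~\eqref{bernstein1} to each pair $(s,a)$ with confidence $\delta/(|\bS||\bA|)$, so the logarithm becomes $\ln(2|\bS||\bA|\delta^{-1})$. A union bound then gives~\eqref{error_pv0_stoc1}.

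For (c): the difficulty is that $V_t-V_0$ depends on past randomness. However, the samples defining $\hat{\P}^{(m_{k,2})}$ at iteration $t$ are drawn fresh and independently of $V_t$ and $V_0$. So, conditioning on the history, Hoeffding's bound~\eqref{hoeffding1} applies to the fixed vector $V_t-V_0$, with confidence $\delta/(|\bS||\bA|)$ per pair. A union bound then yields~\eqref{error_pvkv0_stoc1}.

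For (b), the issue is that $V_0$ is computed from $\Tilde{\P}^0$, so it is correlated with the estimation error. I would therefore center everything on the deterministic vector $V^{\pi_0}$. The policy $\pi_0=\hat{\pi}_k$ depends only on earlier epochs, so it is independent of $\Tilde{\P}^0$. The plan has three steps.

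\emph{Step 1.} Apply Bernstein~\eqref{bernstein1} to $V^{\pi_0}$ (with $\|V^{\pi_0}\|_\infty\le(1+\Bar{h})/(1-\gamma)$), and separately apply Hoeffding to the constant vector. Split $\delta$ into three parts to get the $\ln(6|\bS||\bA|\delta^{-1})$ factor. This gives, for every $(s,a)$,
\[
|(\Tilde{\P}^0-\P)_{s,a}V^{\pi_0}|\le \sqrt{2\sigma_{V^{\pi_0}}(s,a)\ln(\cdot)/m_{k,1}}+\tfrac{2(1+\Bar{h})}{3(1-\gamma)}\ln(\cdot)/m_{k,1}.
\]

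\emph{Step 2.} Write the error in $V_0$. Since $V_0=c_{\pi_0}+h(\pi_0)+\gamma\Tilde{\P}^0_{\pi_0}V_0$ and $V^{\pi_0}=c_{\pi_0}+h(\pi_0)+\gamma\P_{\pi_0}V^{\pi_0}$, subtracting gives
\[
V_0-V^{\pi_0}=\gamma(I-\gamma\Tilde{\P}^0_{\pi_0})^{-1}(\Tilde{\P}^0_{\pi_0}-\P_{\pi_0})V^{\pi_0}.
\]
A naive bound using $\|(I-\gamma\Tilde{\P}^0_{\pi_0})^{-1}\|_\infty\le(1-\gamma)^{-1}$ loses a factor of $(1-\gamma)^{-1/2}$. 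To avoid this, I would swap the resolvent to the true kernel via
\[
(I-\gamma\Tilde{\P}^0_{\pi_0})^{-1}=(I-\gamma\P_{\pi_0})^{-1}+\gamma(I-\gamma\Tilde{\P}^0_{\pi_0})^{-1}(\Tilde{\P}^0_{\pi_0}-\P_{\pi_0})(I-\gamma\P_{\pi_0})^{-1}.
\]
For the leading term, combine $\langle\sqrt{\sigma_{V^{\pi_0}}},\pi_0\rangle\le\sqrt{\sigma^{\pi_0}_{V^{\pi_0}}}$ from~\eqref{law_of_total_var} with Lemma~\ref{lem:total_variance}. This gives $\|(I-\gamma\P_{\pi_0})^{-1}\sqrt{\sigma}\|_\infty\lesssim(1+\Bar{h})(1-\gamma)^{-3/2}$, so the leading term is $\cO((1+\Bar{h})(1-\gamma)^{-3/2}m_{k,1}^{-1/2})$. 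The second-order term is treated as a contraction: write $\Delta:=V_0-V^{\pi_0}$ and obtain a self-bounding inequality of the form $\|\Delta\|_\infty\le(\text{leading})+\tfrac{\gamma}{1-\gamma}\cdot\|(\Tilde{\P}^0-\P)\Delta\|$-type quantity. This perturbation step (Step 2) is the main obstacle. It is also where the condition $m_{k,1}\ge 32(1-\gamma)^{-2}\ln(\cdot)$ enters: it makes the Hoeffding-size error times $(1-\gamma)^{-1}$ at most $1/2$, so the recursion can be absorbed.

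\emph{Step 3.} Plug in $m_{k,1}\ge200(1+\Bar{h})^2u^{-2}(1-\gamma)^{-3}\ln(\cdot)$. Then the leading term is at most about $u/2$, and after absorption $\|V_0-V^{\pi_0}\|_\infty\le u$. For the first inequality in~\eqref{error_pvki_stoc1}, decompose
\[
(\Tilde{\P}^0-\P)_{s,a}V_0=(\Tilde{\P}^0-\P)_{s,a}V^{\pi_0}+(\Tilde{\P}^0-\P)_{s,a}(V_0-V^{\pi_0}).
\]
The second piece is at most $2\|V_0-V^{\pi_0}\|_\infty$ times a Hoeffding-type factor, which by the $32(1-\gamma)$ branch of~\eqref{m_error_pvki_stoc1} contributes at most $\tfrac12(1-\gamma)u$. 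For the first piece, the Bernstein bound from Step 1 with the stated $m_{k,1}$ gives $\tfrac12(1-\gamma)^{3/2}u\sqrt{\sigma_{V^{\pi_0}}(s,a)}/(1+\Bar{h})$, plus a lower-order term that is also absorbed into the $\tfrac12(1-\gamma)u$ slack. Tracking these constants is routine but must be done carefully so the numerical factors $200$ and $32$ suffice.
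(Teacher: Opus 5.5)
Your parts (a) and (c) match the paper. For (c), conditioning on the history and using Hoeffding on the fixed vector $V_t-V_0$ is the right justification. The gap is in (b), in Step 2 and in the second piece of Step 3.

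\textbf{The gap.} Write $\Delta:=V_0-V^{\pi_0}=\gamma(I-\gamma\P_{\pi_0})^{-1}\big[(\Tilde{\P}^0_{\pi_0}-\P_{\pi_0})V^{\pi_0}+(\Tilde{\P}^0_{\pi_0}-\P_{\pi_0})\Delta\big]$. Your absorption step bounds $(\Tilde{\P}^0_{s,a}-\P_{s,a})^{\tr}\Delta$ by a ``Hoeffding-size'' quantity $\cO(\sqrt{\ln(\cdot)/m_{k,1}})\,\|\Delta\|_\infty$. But $\Delta$ is a function of the same samples that define $\Tilde{\P}^0_{s,a}$. Hoeffding and Bernstein need a vector that is fixed or independent of those samples, so they do not apply here.

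The alternatives you could fall back on also fail:
\begin{itemize}
\item The deterministic bound $\|\Tilde{\P}^0_{s,a}-\P_{s,a}\|_1\|\Delta\|_\infty\le 2\|\Delta\|_\infty$ gives a factor $2\gamma/(1-\gamma)$, which is not a contraction.
\item Making the concentration uniform over vectors (an $\ell_1$ bound on $\Tilde{\P}^0_{s,a}-\P_{s,a}$, or a net argument) needs $m_{k,1}\gtrsim|\bS|(1-\gamma)^{-2}$. The hypothesis \eqref{m_error_pvki_stoc1} does not provide this.
\item The resolvent swap does not help. The inner vector $(I-\gamma\P_{\pi_0})^{-1}(\Tilde{\P}^0_{\pi_0}-\P_{\pi_0})V^{\pi_0}$ again depends on row $(s,a)$ of $\Tilde{\P}^0$. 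Bounding the outer difference crudely loses a full factor $(1-\gamma)^{-1}$ against the leading term.
\end{itemize}
The same dependence invalidates your bound on $(\Tilde{\P}^0_{s,a}-\P_{s,a})^{\tr}(V_0-V^{\pi_0})$ in Step 3.

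\textbf{The missing idea.} You need a leave-one-out decoupling, which is what the paper uses, following Pananjady--Wainwright.
\begin{itemize}
\item For each $(s,a)$, replace row $(s,a)$ of $\Tilde{\P}^0$ by the true $\P_{s,a}$. Let $\widehat{V}^{(s,a)}$ be the value of $\pi_0$ under this perturbed kernel. It is independent of $\Tilde{\P}^0_{s,a}$.
\item Split $(\Tilde{\P}^0_{s,a}-\P_{s,a})^{\tr}\Delta=(\Tilde{\P}^0_{s,a}-\P_{s,a})^{\tr}(\widehat{V}^{(s,a)}-V^{\pi_0})+(\Tilde{\P}^0_{s,a}-\P_{s,a})^{\tr}(V_0-\widehat{V}^{(s,a)})$.
\item Hoeffding applies legitimately to the first term, by independence.
\item The second term is a rank-one perturbation: $V_0-\widehat{V}^{(s,a)}=\gamma\pi_0(a|s)\big[(\Tilde{\P}^0_{s,a}-\P_{s,a})^{\tr}\widehat{V}^{(s,a)}\big](I-\gamma\Tilde{\P}^0_{\pi_0})^{-1}e_s$, where $e_s$ is the $s$-th unit vector.
\item Control it with Sherman--Morrison plus a Hoeffding bound on the independent scalar: $\big|(\Tilde{\P}^0_{s,a}-\P_{s,a})^{\tr}(I-\gamma\widehat{\P}^{(s,a)}_{\pi_0})^{-1}e_s\big|\le 1/4$. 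This is where $m_{k,1}\ge 32(1-\gamma)^{-2}\ln(6|\bS||\bA|\delta^{-1})$ genuinely enters.
\end{itemize}
The result is $|(\Tilde{\P}^0_{s,a}-\P_{s,a})^{\tr}\Delta|\le\frac{4}{9}(1-\gamma)\|\Delta\|_\infty+\frac{7}{9}|(\Tilde{\P}^0_{s,a}-\P_{s,a})^{\tr}V^{\pi_0}|$. With that in hand, your self-bounding step (via Lemma~\ref{lem:total_variance} and \eqref{law_of_total_var}) and your Step 3 decomposition go through as planned.
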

\begin{proof}
    We first show Lemma~\ref{lem:error_pv_stoc1}$(a)$. In view of Lemma~\ref{lem:concentration_bounds}$(b)$ and $\|V_0\|_\infty\leq u_0$, for any $(s,a)\in\bS\times\bA$, we know that~\eqref{error_pv0_stoc1} holds with probability at least $1-\alpha/(|\bS||\bA|)$. Since the processes of approximating $\Tilde{\P}_{s,a}^0$ are independent for different $(s,a)$ pairs, then Lemma~\ref{lem:error_pv_stoc1}$(a)$ holds by taking a union bound over all $(s,a)\in\bS\times\bA$.

    Next, since the proof of Lemma~\ref{lem:error_pv_stoc1}$(b)$ is more complicated, we defer it to the Appendix.

    Let us now consider Lemma~\ref{lem:error_pv_stoc1}$(c)$. In view of Lemma~\ref{lem:concentration_bounds}$(b)$, for any $(s,a)\in\bS\times\bA$,~\eqref{error_pvkv0_stoc1} holds with probability at least $1-\alpha/(|\bS||\bA|)$. Since we independently estimate $\hat{\P}_{s,a}^{(m_{k,2})}$ for each $(s,a)$ pair in Algorithm~\ref{algo:VMD2}, then Lemma~\ref{lem:error_pv_stoc1}$(c)$ is satisfied by taking a union bound over all $(s,a)\in\bS\times\bA$.
\end{proof}
\vgap

Lemma~\ref{lem:error_pv_stoc1} quantifies the stochastic errors introduced by the estimators $\Tilde{\P}^tV_t,t=0,1,...,T-1$. In the next several lemmas, we show that for each epoch $k$, if $\|V_0-V^*\|_\infty\leq u_k$, then with high probability, $V_t$ serves as an approximate upper bound on $V^{\pi_t}$ for all $t\in\{1,2,...,T\}$, with an error of order $\cO(u_k)$. To establish this result, we first utilize the error bounds in Lemma~\ref{lem:error_pv_stoc1} to derive the following approximate monotonicity property
\begin{align}
    V_t\geq\Gamma_{\pi_t}V_t-\beta_t \quad\text{and}\quad V_t\geq V^{\pi_t}-(I-\gamma\P_{\pi_t})^{-1}\beta_t \quad\text{for}\quad t=1,2,...,T
    \label{induction1_stoc1}
\end{align}
for some $\beta_t\in\R^{|\bS|}$, and then show $\|(I-\gamma\P_{\pi_t})^{-1}\beta_t\|_\infty\in\cO(u_k)$ with high probability.

Let us first specify $\beta_t,t=1,...,T$ in~\eqref{induction1_stoc1} by the following result. For notational simplicity, we define $\Tilde{\Gamma}^t_{\pi}V:=\langle c+\gamma\Tilde{\P}^t V,\pi\rangle+h(\pi)\in\R^{|\bS|}$ for any $V\in\R^{|\bS|}$ and policy $\pi$.

\begin{lemma}
    In any epoch $k\in\{0,1,...,K-1\}$ of Algorithm~\ref{algo:VMD2}, for any iteration $t\in\{1,2,...,T\}$, we have
    \begin{align}
        V_t\geq\Gamma_{\pi_t}V_t-\beta_t,
        \label{res:lem_induc1_stoc1}
    \end{align}
    in which each entry of $\beta_t\in\R^{|\bS|}$ is defined as $\beta_t(s):=-\langle \Tilde{Q}^{\tau(s,t)}(s,\cdot)-Q^{\tau(s,t)}(s,\cdot),\pi_t(\cdot|s)\rangle$, where the function $\tau(s,t)\in\{0,1,...,t-1\}$ depends on the state $s$ and iteration $t$, and returns some iteration before $t$.
    \label{lem:induc1_stoc1}
\end{lemma}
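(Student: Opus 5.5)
The plan is to argue state by state, defining $\tau(s,t)$ as the last iteration $t'<t$ at which the policy at $s$ was actually changed. Concretely, $\tau(s,t)$ is the largest $t'\in\{0,\dots,t-1\}$ with $V_{t'+1}(s)=\Tilde{V}_{t'+1}(s)$, so that $\pi_{t'+1}(\cdot|s)=\Tilde{\pi}_{t'+1}(\cdot|s)$. By~\eqref{eq3:algoVMD2}, the policy at $s$ is then frozen afterwards: $\pi_t(\cdot|s)=\pi_{\tau+1}(\cdot|s)=\Tilde{\pi}_{\tau+1}(\cdot|s)$, and $V_t(s)=V_{\tau+1}(s)$ since all later updates took the ``$V_{t'}(s)$'' branch of the min. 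If no such $t'$ exists, then $\pi_t(\cdot|s)=\pi_0(\cdot|s)$ and $V_t(s)=V_0(s)$. This case has to be handled separately, and I address it at the end.

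In the main case, with $\tau=\tau(s,t)$, I would use~\eqref{eq2:algoVMD2}, in which $h$ is evaluated at $\pi_{\tau+1}=\Tilde{\pi}_{\tau+1}$ on this branch. This gives
\[
V_t(s)=V_{\tau+1}(s)=\big\langle \Tilde{Q}^{\tau}(s,\cdot),\pi_t(\cdot|s)\big\rangle+h(\pi_t(\cdot|s))
=\big\langle Q^{\tau}(s,\cdot),\pi_t(\cdot|s)\big\rangle+h(\pi_t(\cdot|s))+\beta_t(s)\cdot(-1)\cdot(-1)\!\!,
\]
that is, $V_t(s)=\langle c(s,\cdot)+\gamma\P_sV_\tau,\pi_t(\cdot|s)\rangle+h(\pi_t(\cdot|s))-\beta_t(s)$, using the definition of $\beta_t$. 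The first two terms equal $(\Gamma_{\pi_t}V_\tau)(s)$.

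The min in~\eqref{eq2:algoVMD2} gives the monotonicity $V_0\geq V_1\geq\cdots\geq V_t$ directly, so $V_\tau\geq V_t$. Since $\P_s$ has nonnegative entries and $\pi_t\geq0$, monotonicity of $\Gamma_{\pi_t}$ yields $(\Gamma_{\pi_t}V_\tau)(s)\geq(\Gamma_{\pi_t}V_t)(s)$. Hence $V_t(s)\geq(\Gamma_{\pi_t}V_t)(s)-\beta_t(s)$, which is~\eqref{res:lem_induc1_stoc1} at this $s$.

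The obstacle is the state where the policy never changed, with $\pi_t(\cdot|s)=\pi_0(\cdot|s)$ and $V_t(s)=V_0(s)$. I would take $\tau(s,t)=0$ there and aim to show $V_0(s)\geq(\Gamma_{\pi_0}V_0)(s)-\beta_t(s)$, where $\beta_t(s)=-\langle\Tilde{Q}^0(s,\cdot)-Q^0(s,\cdot),\pi_0(\cdot|s)\rangle$. This reduces to $V_0(s)\geq(\Tilde{\Gamma}^0_{\pi_0}V_0)(s)$, and the inequality then extends to $V_t$ by monotonicity as above. Establishing $V_0\geq\Tilde{\Gamma}^0_{\pi_0}V_0$ splits into two cases:
\begin{itemize}
\item For $k\geq1$, $V_0=(I-\gamma\Tilde{\P}^0_{\pi_0})^{-1}(c_{\pi_0}+h(\pi_0))$ is exactly the fixed point of $\Tilde{\Gamma}^0_{\pi_0}$, so equality holds.
\item For $k=0$, $V_0=(1+\Bar{h})/(1-\gamma)\cdot\mathbf{1}_{\bS}$, and since $\Tilde{\P}^0$ is stochastic, the same computation as in Lemma~\ref{lem:monotone_deter} gives $\Tilde{\Gamma}^0_{\pi_0}V_0\leq V_0$.
\end{itemize}
Since $V_t(s)=V_0(s)$ and $V_0\geq V_t$, this completes the argument for every $s$.
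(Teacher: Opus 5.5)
Your proof is correct and is essentially the paper's argument with the induction on $t$ unrolled. The paper sets $\tau(s,t'+1)=t'$ when the update at $s$ is accepted and $\tau(s,t'+1)=\tau(s,t')$ otherwise, which is exactly your ``last accepted update, or $0$ if none''. Both proofs use the same ingredients: $V_0\geq V_1\geq\cdots$, monotonicity of $\Gamma_{\pi}$, and $V_0\geq\Tilde{\Gamma}^0_{\pi_0}V_0$ in both cases $k=0$ and $k>0$. One cosmetic fix: the term $+\beta_t(s)\cdot(-1)\cdot(-1)$ in your first display should read $-\beta_t(s)$, as your next line correctly states.
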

\begin{proof}
    We show this proof through an inductive argument. First, for $t=1$ and any $s\in\bS$, we consider two cases from~\eqref{eq2:algoVMD2}. In the first case $V_1(s)=\Tilde{V}_1(s)$, we have
    \begin{align}
        V_1(s)=\big(\Tilde{\Gamma}^0_{\pi_1}V_0\big)(s)=(\Gamma_{\pi_1}V_0)(s)+\big\langle \Tilde{Q}^0(s,\cdot)-Q^0(s,\cdot),\pi_1(\cdot|s)\big\rangle\geq(\Gamma_{\pi_1}V_1)(s)-\beta_1(s),
        \label{eq1:lem_induc1_stoc1}
    \end{align}
    where the inequality uses $V_0\geq V_1$ and $\beta_1(s)=-\langle \Tilde{Q}^0(s,\cdot)-Q^0(s,\cdot),\pi_1(\cdot|s)\rangle$, with $\tau(s,1)=0$. Now we discuss the second case $V_1(s)=V_0(s)$. In view of~\eqref{eq3:algoVMD2} in Algorithm~\ref{algo:VMD2}, we know that $\pi_1(\cdot|s)=\pi_0(\cdot|s)$, and hence
    \begin{align}
        V_1(s)=V_0(s)\geq\big(\Tilde{\Gamma}^0_{\pi_0}V_0\big)(s)\geq\big(\Tilde{\Gamma}^0_{\pi_1}V_1\big)(s)=(\Gamma_{\pi_1}V_1)(s)-\beta_1(s),
        \label{eq2:lem_induc1_stoc1}
    \end{align}
    where the first inequality holds because we have $V_0=(1+\Bar{h})/(1-\gamma)\cdot\mathbf{1}_{\bS}$ when $k=0$, and $V_0=\Tilde{\Gamma}^0_{\pi_0}V_0$ when $k>0$ by the algorithmic procedure in Algorithm~\ref{algo:VMD2}, the second inequality uses $V_0\geq V_1$, and the second equality applies $\beta_1(s)=-\langle \Tilde{Q}^0(s,\cdot)-Q^0(s,\cdot),\pi_1(\cdot|s)\rangle$, with $\tau(s,1)=0$.

    Next, suppose that for any $t'\in\{1,2,...,T-1\}$,~\eqref{res:lem_induc1_stoc1} holds for $t=1,2,...,t'$, then we show that~\eqref{res:lem_induc1_stoc1} holds for $t=t'+1$. For any $s\in\bS$, we still consider two cases from~\eqref{eq2:algoVMD2}. In the first case $V_{t'+1}(s)=\Tilde{V}_{t'+1}(s)$, we have
    \begin{align}
        V_{t'+1}(s)=\big(\Tilde{\Gamma}^{t'}_{\pi_{t'+1}}V_{t'}\big)(s)=\big(\Gamma_{\pi_{t'+1}}V_{t'}\big)(s)+\big\langle \Tilde{Q}^{t'}(s,\cdot)-Q^{t'}(s,\cdot),\pi_{t'+1}(\cdot|s)\big\rangle\geq\big(\Gamma_{\pi_{t'+1}}V_{t'+1}\big)(s)-\beta_{t'+1}(s),
        \label{eq3:lem_induc1_stoc1}
    \end{align}
    where the inequality utilizes $V_{t'}\geq V_{t'+1}$ and $\beta_{t'+1}(s)=-\langle \Tilde{Q}^{t'}(s,\cdot)-Q^{t'}(s,\cdot),\pi_{t'+1}(\cdot|s)\rangle$, with $\tau(s,t'+1)=t'$. For the second case $V_{t'+1}(s)=V_{t'}(s)$, $\pi_{t'+1}(\cdot|s)=\pi_{t'}(\cdot|s)$ holds by~\eqref{eq3:algoVMD2} in Algorithm~\ref{algo:VMD2}, then
    \begin{align}
        V_{t'+1}(s)=V_{t'}(s)\geq\big(\Gamma_{\pi_{t'}}V_{t'}\big)(s)-\beta_{t'}(s)\geq\big(\Gamma_{\pi_{t'+1}}V_{t'+1}\big)(s)-\beta_{t'+1}(s),
        \label{eq4:lem_induc1_stoc1}
    \end{align}
    where the first inequality follows from the inductive hypothesis, and the second inequality uses $V_{t'}\geq V_{t'+1}$ and $\beta_{t'+1}(s)=\beta_{t'}(s)$, with $\tau(s,t'+1)=\tau(s,t')$. This completes the inductive argument and concludes the proof.
\end{proof}
\vgap

Lemma~\ref{lem:induc1_stoc1} characterizes the sequence $\beta_t$ defined in~\eqref{induction1_stoc1} for $t=1,2,...,T$. To further show $\|(I-\gamma\P_{\pi_t})^{-1}\beta_t\|_\infty\in\cO(u_k)$ for $t=1,2,...,T$ with high probability, we proceed by mathematical induction and divide the argument into two parts. The first part addresses the initial iterate in each epoch and the choice of $m_{k,1}$, while the second part handles the remaining iterates within each epoch and the choice of $m_{k,2}$.

For any epoch $k\in\{0,1,...,K-1\}$ of Algorithm~\ref{algo:VMD2} and $V\in\R^{|\bS|}$, we first define the error
\begin{align}
    \varepsilon_{k}(v):=\Big[\tfrac{1}{1+\Bar{h}}(1-\gamma)^{\tfrac{3}{2}}\sqrt{\sigma_{V}(s,a)}+(1-\gamma)\Big]\min\{u_k,1\}.
    \label{def:varepsilon1_stoc1}
\end{align}
Then, in the first part of our induction, we show that for any $c\geq 5$, under specific sample sizes, the following relations
\begin{align}
    &(a)\begin{cases}
        \big|\Tilde{\P}_{s,a}^{0\tr}V_0-\P_{s,a}^\tr V_0\big|\leq\tfrac{\varepsilon_{k}(V_0)}{2c} \quad \forall (s,a)\in\bS\times\bA \quad\text{if } k=0; \\
        \big|\Tilde{\P}_{s,a}^{0\tr}V_0-\P_{s,a}^\tr V_0\big|\leq\tfrac{\varepsilon_{k}(V^{\pi_0})}{2c} \quad \forall (s,a)\in\bS\times\bA \quad\text{and}\quad \|V_0-V^{\pi_0}\|_\infty\leq\tfrac{u_k}{c} \quad\text{if } k>0;
    \end{cases} \nn\\
    &(b) \ V_1\geq V^{\pi_1}-\tfrac{5u_k}{2c}\cdot\mathbf{1}_{\bS}
    \label{induction3_stoc1}
\end{align}
are satisfied with high probability as shown in the lemma below.

\begin{lemma}
    For any epoch $k\in\{0,1,...,K-1\}$ of Algorithm~\ref{algo:VMD2} and $\delta\in(0,1)$, suppose that $\|V^{\pi_0}-V^*\|_\infty\leq u_k$, and the algorithmic parameter
    \begin{align}
        m_{k,1}\geq \big[10\sqrt{2}c(1+\Bar{h})\big]^2(1-\gamma)^{-3}\min\{u_k,1\}^{-2}\ln(12|\bS||\bA|\delta^{-1}),
        \label{param:lem_induc2_stoc1}
    \end{align}
    then~\eqref{induction3_stoc1} holds with probability at least $1-\delta/2$.
    \label{lem:induc2_stoc1}
\end{lemma}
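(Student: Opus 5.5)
We split according to the two cases in \eqref{induction3_stoc1}(a), derive part (a) from Lemma~\ref{lem:error_pv_stoc1}(a),(b) with a failure probability of $\delta/4$ for each event, and then obtain part (b) deterministically on the good event using Lemma~\ref{lem:induc1_stoc1} and \eqref{eq:approxmonotonebound}. A union bound then gives overall probability at least $1-\delta/2$.

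\emph{Part (a), case $k=0$.} Apply Lemma~\ref{lem:error_pv_stoc1}(a) with $\delta/4$. The logarithm is then $\ln(8|\bS||\bA|\delta^{-1})\le\ln(12|\bS||\bA|\delta^{-1})$. Plugging \eqref{param:lem_induc2_stoc1} into the variance term gives
\[
\sqrt{2}m_{0,1}^{-1/2}\sqrt{\sigma_{V_0}}\sqrt{\ln(\cdot)}\le \tfrac{(1-\gamma)^{3/2}\min\{u_0,1\}}{10c(1+\Bar{h})}\sqrt{\sigma_{V_0}},
\]
which is at most $\tfrac{1}{2c}$ times the first piece of $\varepsilon_0(V_0)$. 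For the second-order term, the same bound gives $m_{0,1}^{-1}\ln(\cdot)\le(1-\gamma)^3\min\{u_0,1\}^2/[200c^2(1+\Bar{h})^2]$. Hence
\[
\tfrac{2(1+\Bar{h})}{3(1-\gamma)}m_{0,1}^{-1}\ln(\cdot)\le (1-\gamma)\min\{u_0,1\}/(2c),
\]
using $\min\{u_0,1\}\le1$, $1+\Bar{h}\ge1$ and $c\ge5$.

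\emph{Part (a), case $k>0$.} Apply Lemma~\ref{lem:error_pv_stoc1}(b) with $u=\min\{u_k,1\}/c$ (which lies in the admissible range) and failure probability $\delta/2$. This turns $\ln(6\cdot)$ into $\ln(12|\bS||\bA|\delta^{-1})$. We then check that \eqref{param:lem_induc2_stoc1} dominates \eqref{m_error_pvki_stoc1}. The requirement $200c^2(1+\Bar{h})^2\min\{u_k,1\}^{-2}$ matches exactly. The second requirement, $32(1-\gamma)$, is dominated since $200c^2\ge 32$. The conclusion of Lemma~\ref{lem:error_pv_stoc1}(b) is then exactly $\varepsilon_k(V^{\pi_0})/(2c)$, together with $\|V_0-V^{\pi_0}\|_\infty\le\min\{u_k,1\}/c\le u_k/c$. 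Since only one of the two cases applies for a given $k$, the failure probability is at most $\delta/2$.

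\emph{Part (b).} By Lemma~\ref{lem:induc1_stoc1}, $\tau(s,1)=0$, so $\beta_1=-\langle\Tilde{Q}^0-Q^0,\pi_1\rangle$. By \eqref{eq:approxmonotonebound}, $V_1\ge V^{\pi_1}-(I-\gamma\P_{\pi_1})^{-1}\beta_1$. It therefore suffices to show
\[
(I-\gamma\P_{\pi_1})^{-1}|\beta_1|\le \tfrac{5u_k}{2c}\mathbf{1}_{\bS}.
\]
Using (a), we have $|\beta_1|\le\tfrac{\gamma}{2c}\langle\varepsilon_k(\cdot),\pi_1\rangle$. The constant part of $\varepsilon_k$ contributes at most $\tfrac{\gamma}{2c}(1-\gamma)\min\{u_k,1\}(1-\gamma)^{-1}\le u_k/(2c)$.

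The main obstacle is the variance part, because it involves $\sigma_{V_0}$ or $\sigma_{V^{\pi_0}}$ weighted by $\pi_1$, while Lemma~\ref{lem:total_variance} controls $\sigma_{V^{\pi_1}}^{\pi_1}$. We bridge this gap as follows. First, use $\sqrt{\sigma_{V}}\le\sqrt{\sigma_{V^{\pi_1}}}+\sqrt{\sigma_{V-V^{\pi_1}}}\le \sqrt{\sigma_{V^{\pi_1}}}+\|V-V^{\pi_1}\|_\infty$, which follows from the triangle inequality for standard deviations. Second, apply \eqref{law_of_total_var} to get $\langle\sqrt{\sigma_{V^{\pi_1}}},\pi_1\rangle\le\sqrt{\sigma^{\pi_1}_{V^{\pi_1}}}$. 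Third, invoke Lemma~\ref{lem:total_variance}. This yields the bound
\[
\tfrac{(1-\gamma)^{3/2}\min\{u_k,1\}}{2c(1+\Bar{h})}\cdot\tfrac{(1+\Bar{h})\sqrt{2}}{\gamma(1-\gamma)^{3/2}},
\]
which is $\cO(u_k/c)$ after absorbing $\gamma$ through the $\gamma$ prefactor of $\beta_1$.

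The leftover term $\|V-V^{\pi_1}\|_\infty(1-\gamma)^{1/2}\min\{u_k,1\}/[(1+\Bar{h})(1-\gamma)]$ is bounded crudely via $\|V\|_\infty\le(1+\Bar{h})/(1-\gamma)$. This gives $\mathcal{O}((1-\gamma)^{-1/2})$ times $\min\{u_k,1\}$, which is too weak. So we instead avoid it by bounding $\|V^{\pi_0}-V^{\pi_1}\|_\infty$ through monotonicity: $V^{\pi_1}\le V_1+(\text{small})\le V_0$ and $V^{\pi_0}\ge V^*\ge V^{\pi_0}-u_k$. This circular step is where the constants must be tuned. If the direct route fails, we would instead apply Lemma~\ref{lem:total_variance} to $\pi_0$ and compare the two resolvents. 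Collecting the pieces should give at most $5u_k/(2c)$.
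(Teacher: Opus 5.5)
Your treatment of \eqref{induction3_stoc1}(a) is correct and follows the paper. The choice $u=\min\{u_k,1\}/c$ in Lemma~\ref{lem:error_pv_stoc1}(b) makes both the sample-size condition and the conclusion match exactly. The skeleton of your part (b) is also the paper's: triangle inequality for standard deviations, then \eqref{law_of_total_var}, then Lemma~\ref{lem:total_variance}.

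The gap is the step you yourself call circular. After the resolvent, the leftover term is $\frac{\gamma(1-\gamma)^{1/2}\min\{u_k,1\}}{2c(1+\Bar{h})}\|V^{\pi_0}-V^{\pi_1}\|_\infty$, with $V_0$ in place of $V^{\pi_0}$ when $k=0$. Bounding this via $V^{\pi_1}\le V_1+(\text{small})$ needs an upper bound on $V^{\pi_1}-V_1$, which is exactly statement (b). Saying the constants ``must be tuned'' does not break the loop. Your fallback does not work either: $\beta_1$ is averaged under $\pi_1$ and propagated by $(I-\gamma\P_{\pi_1})^{-1}$. Lemma~\ref{lem:total_variance} applied to $\pi_0$ therefore gives no control, and nothing bounds the difference between $(I-\gamma\P_{\pi_1})^{-1}$ and $(I-\gamma\P_{\pi_0})^{-1}$. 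As written, (b) is not proved.

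The paper breaks the circle with a preliminary crude pass.
\begin{itemize}
\item Using only $\sqrt{\sigma_V}\le(1+\Bar{h})/(1-\gamma)$ inside $\varepsilon_k$, part (a) gives $|\Tilde{\P}^{0\tr}_{s,a}V_0-\P^{\tr}_{s,a}V_0|\le\frac{1}{2c}[(1-\gamma)^{1/2}+(1-\gamma)]$.
\item This yields $V_1\ge V^{\pi_1}-w\mathbf{1}_{\bS}$ with $w=\frac{1}{2c}[(1-\gamma)^{-1/2}+1]$. That is far too weak for (b) itself.
\item It does give $\|V^{\pi_1}-V^{\pi_0}\|_\infty\le u_k+w$, using $V^{\pi_0}+u_k/c\ge V_0\ge V_1\ge V^{\pi_1}-w$ and $V^{\pi_1}\ge V^*\ge V^{\pi_0}-u_k$.
\item In the leftover term, the prefactor $(1-\gamma)^{1/2}\min\{u_k,1\}/(1+\Bar{h})$ cancels the $(1-\gamma)^{-1/2}$ in $w$. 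The leftover is then at most $\frac{1}{2c}(1+\frac{1}{c})u_k$, and the total is $\frac{(2+\sqrt{2}+1/c)u_k}{2c}\le\frac{5u_k}{2c}$.
\end{itemize}
Your crude attempt lost exactly this half power of $(1-\gamma)$. You applied the $\|V\|_\infty$ bound to $\|V-V^{\pi_1}\|_\infty$ directly, instead of to the variance in a first estimate of $V^{\pi_1}-V_1$.

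An equally valid repair is absorption. Set $X:=\|(I-\gamma\P_{\pi_1})^{-1}|\beta_1|\|_\infty<\infty$. Then $V^{\pi_1}\le V_1+X\mathbf{1}_{\bS}\le V_0+X\mathbf{1}_{\bS}$ gives $\|V^{\pi_1}-V^{\pi_0}\|_\infty\le u_k+X$. Your own estimates then give $X\le\frac{(2+\sqrt{2})u_k}{2c}+\frac{X}{2c}$, i.e.\ $X\le\frac{(2+\sqrt{2})u_k}{2c-1}\le\frac{5u_k}{2c}$ for $c\ge 5$. One of these two arguments has to be supplied.
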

\begin{proof}
    First, in view of Lemma~\ref{lem:error_pv_stoc1}$(a)$, Lemma~\ref{lem:error_pv_stoc1}$(b)$ and the choice of $m_{k,1}$ in~\eqref{param:lem_induc2_stoc1}, we know that ~\eqref{induction3_stoc1}$(a)$ is satisfied with probability at least $1-\delta/2$.

    Next, to show~\eqref{induction3_stoc1}$(b)$, in view of~\eqref{eq:approxmonotonebound} and Lemma~\ref{lem:induc1_stoc1}, it suffices to show
    \begin{align}
        (I-\gamma\P_{\pi_1})^{-1}\big(-\big\langle\Tilde{Q}^0-Q^0,\pi_1\big\rangle\big)\leq\tfrac{5u_k}{2c}\cdot\mathbf{1}_{\bS}.
        \label{eq7:lem_monotone_stoc1}
    \end{align}
    For any $(s,a)\in\bS\times\bA$, since we have $\sigma_{V_0}(s,a)\leq\|V_0\|_\infty^2\leq (1+\Bar{h})^2/(1-\gamma)^2$ when the epoch $k=0$, and $\sigma_{V^{\pi_0}}(s,a)\leq\|V^{\pi_0}\|_\infty^2\leq (1+\Bar{h})^2/(1-\gamma)^2$ when $k>0$, then~\eqref{induction3_stoc1}$(a)$ implies that
    \begin{align}
        \big|\Tilde{\P}_{s,a}^{0\tr}V_0-\P_{s,a}^\tr V_0\big|\leq \tfrac{1}{2c}\big[(1-\gamma)^{\tfrac{1}{2}}+(1-\gamma)\big]=:w_0 \qquad \forall (s,a)\in\bS\times\bA,
        \label{eq10:lem_monotone_stoc1}
    \end{align}
    and thus
    \begin{align}
        \big\|(I-\gamma\P_{\pi_1})^{-1}\big\langle\Tilde{Q}^0-Q^0,\pi_1\big\rangle\big\|_\infty\leq(1-\gamma)^{-1}\gamma w_0\leq\tfrac{1}{2c}\big[(1-\gamma)^{-\tfrac{1}{2}}+1\big]=:w,
        \label{eq11:lem_monotone_stoc1}
    \end{align}
    which, in view of~\eqref{eq:approxmonotonebound} and Lemma~\ref{lem:induc1_stoc1}, yields
    \begin{align}
        V_1\geq V^{\pi_1}+(I-\gamma\P_{\pi_1})^{-1}\big\langle\Tilde{Q}^0-Q^0,\pi_1\big\rangle\geq V^{\pi_1}-w\cdot\mathbf{1}_{\bS}.
        \label{eq12:lem_monotone_stoc1}
    \end{align}

    Notice that the bound in~\eqref{eq11:lem_monotone_stoc1} does not imply~\eqref{eq7:lem_monotone_stoc1}. To establish~\eqref{eq7:lem_monotone_stoc1}, we further utilize Lemma~\ref{lem:total_variance}. Since $\Tilde{Q}^0-Q^0$ depends on $\sigma_{V_0}$ or $\sigma_{V^{\pi_0}}$ (see~\eqref{induction3_stoc1}), then we relates $\sigma_{V_0}$ or $\sigma_{V^{\pi_0}}$ to $\sigma_{V^{\pi_1}}$ in order to invoke Lemma~\ref{lem:total_variance}. To this end, we first bound $\|V^{\pi_1}-V_0\|_\infty$ if epoch $k=0$, and $\|V^{\pi_1}-V^{\pi_0}\|_\infty$ if $k>0$, i.e.,
    \begin{align}
    \begin{cases}
        &V_0+w\cdot\mathbf{1}_{\bS}\geq V_1+w\cdot\mathbf{1}_{\bS}\overset{(a)}{\geq} V^{\pi_1}\geq V^*\overset{(b)}{\geq}V_0-u_0\cdot\mathbf{1}_{\bS} \quad\text{if } k=0; \\
        &V^{\pi_0}+(u_k+w)\cdot\mathbf{1}_{\bS}\overset{(c)}{\geq} V_0+w\cdot\mathbf{1}_{\bS}\geq V_1+w\cdot\mathbf{1}_{\bS}\overset{(d)}{\geq} V^{\pi_1}\geq V^*\overset{(e)}{\geq}V^{\pi_0}-u_k\cdot\mathbf{1}_{\bS} \quad\text{if } k>0,
    \end{cases}
        \label{eq13:lem_monotone_stoc1}
    \end{align}
    where $(a),(d)$ follow from~\eqref{eq12:lem_monotone_stoc1}, $(b)$ uses $\|\hat{V}_0-V^*\|_\infty\leq u_0$, $(c)$ applies $\|V_0-V^{\pi_0}\|_\infty\leq u_k/c\leq u_k$ by~\eqref{induction3_stoc1}$(a)$, and $(e)$ uses $\|V^{\pi_0}-V^*\|_\infty\leq u_k$. It directly follows from~\eqref{eq13:lem_monotone_stoc1} that
    \begin{align}
        \|V^{\pi_1}-V_0\|_\infty\leq u_0+w \quad\text{if } k=0; \qquad \|V^{\pi_1}-V^{\pi_0}\|_\infty\leq u_k+w \quad\text{if } k>0.
        \label{eq14:lem_monotone_stoc1}
    \end{align}
    Utilizing~\eqref{eq14:lem_monotone_stoc1}, the triangle inequality, i.e., $\sqrt{\sigma_V(s,a)}\leq\sqrt{\sigma_{v'}(s,a)}+\sqrt{\sigma_{v-v'}(s,a)} \ \forall v,v'\in\R^{|\bS|}$, and the relation $\sqrt{\sigma_V(s,a)}\leq\|V\|_\infty \ \forall V\in\R^{|\bS|}$, for any $(s,a)\in\bS\times\bA$, we have
    \begin{align}
    \begin{cases}
        \sqrt{\sigma_{V_0}(s,a)}\leq\sqrt{\sigma_{V^{\pi_1}}(s,a)}+\|V^{\pi_1}-V_0\|_\infty\leq\sqrt{\sigma_{V^{\pi_1}}(s,a)}+u_0+w \quad\text{if } k=0; \\
        \sqrt{\sigma_{V^{\pi_0}}(s,a)}\leq\sqrt{\sigma_{V^{\pi_1}}(s,a)}+\|V^{\pi_1}-V^{\pi_0}\|_\infty\leq\sqrt{\sigma_{V^{\pi_1}}(s,a)}+u_k+w \quad\text{if } k>0.
    \end{cases}
    \label{eq15:lem_monotone_stoc1}
    \end{align} Combining~\eqref{induction3_stoc1}$(a)$ with~\eqref{eq15:lem_monotone_stoc1}, we obtain
    \begin{align}
        \big|\Tilde{\P}_{s,a}^{0\tr}V_0-\P_{s,a}^\tr V_0\big|\leq\tfrac{1}{2c}\Big[\tfrac{(1-\gamma)^{3/2}}{1+\Bar{h}}\big(\sqrt{\sigma_{V^{\pi_1}}(s,a)}+u_k+w\big)+(1-\gamma)\Big]\min\{u_k,1\} \quad \forall (s,a)\in\bS\times\bA,
        \label{eq17:lem_monotone_stoc1}
    \end{align}
    and it follows from~\eqref{eq17:lem_monotone_stoc1} that
    \begin{align}
        &\big\|(I-\gamma\P_{\pi_1})^{-1}\big\langle\Tilde{Q}^0-Q^0,\pi_1\big\rangle\big\|_\infty \nn\\
        &\leq \tfrac{\gamma(1-\gamma)^{3/2}}{2c(1+\Bar{h})}\big[\big\|(I-\gamma\P_{\pi_1})^{-1}\big\langle\sqrt{\sigma_{V^{\pi_1}}},\pi_1\big\rangle\big\|_\infty+(1-\gamma)^{-1}(u_k+w)\big]\min\{u_k,1\}+\tfrac{\gamma (1-\gamma)}{2c}\min\{u_k,1\} \nn\\
        &\leq\tfrac{\gamma(1-\gamma)^{3/2}}{2c(1+\Bar{h})}\Big\{(1+\Bar{h})(1+\gamma)^{\tfrac{1}{2}}\gamma^{-1}(1-\gamma)^{-\tfrac{3}{2}}+(1-\gamma)^{-1}\big[u_k+\tfrac{1}{2c}(1-\gamma)^{-\tfrac{1}{2}}+\tfrac{1}{2c}\big]\Big\}\min\{u_k,1\}+\tfrac{1}{2c}u_k \nn\\
        &\leq\tfrac{\sqrt{2}}{2c}u_k+\tfrac{1}{2c}\times\big(1+\tfrac{1}{2c}+\tfrac{1}{2c}\big)u_k+\tfrac{1}{2c}u_k=\tfrac{(2+\sqrt{2}+1/c) u_k}{2c} \leq \tfrac{5u_k}{2c},
        \label{eq18:lem_monotone_stoc1}
    \end{align}
    where the first inequality applies~\eqref{eq17:lem_monotone_stoc1}, the second one utilizes~\eqref{law_of_total_var}, Lemma~\ref{lem:total_variance} and the definition of $w$ in~\eqref{eq11:lem_monotone_stoc1}, and the last one uses $c\geq 5$. This implies~\eqref{eq7:lem_monotone_stoc1} and completes the proof.
\end{proof}
\vgap

Next, we proceed to the second part of our induction. Specifically, we show that for any $t'\in\{1,2,...,T-1\}$, if~\eqref{induction3_stoc1} holds and
\begin{align}
    &(a) \ \big|\hat{\P}_{s,a}^{(m_{k,2})\tr}(V_t-V_0)-\P_{s,a}^\tr (V_t-V_0)\big|\leq \tfrac{1}{2c}(1-\gamma)u_k \quad \forall (s,a)\in\bS\times\bA, \nn\\
    &(b) \ V_{t+1}\geq V^{\pi_{t+1}}-\tfrac{5u_k}{2c}\cdot\mathbf{1}_{\bS}
    \label{induction4_stoc1}
\end{align}
are satisfied for all iterations $t=1,2,...,t'-1$, then~\eqref{induction4_stoc1} holds for $t=t'$ with probability at least $1-\delta/[2(T-1)]$. Note that in Lemma~\ref{lem:induc2_stoc1}, we proved that~\eqref{induction3_stoc1} holds with high probability by utilizing Lemma~\ref{lem:error_pv_stoc1}$(a)$ and Lemma~\ref{lem:error_pv_stoc1}$(b)$, both of which rely on Bernstein's inequality, together with the bound $\|V_0\|_\infty\leq\cO((1-\gamma)^{-1})$ or $\|V^{\pi_0}\|_\infty\leq\cO((1-\gamma)^{-1})$. By contrast, to show that~\eqref{induction4_stoc1} holds for $t=t'$ with high probability, we apply Lemma~\ref{lem:error_pv_stoc1}$(c)$, which is based on Hoeffding's inequality, and use the bound $\|V_{t'}-V_0\|_\infty=\cO(u_k)$, which will be established later.

\begin{lemma}
    In any epoch $k\in\{0,1,...,K-1\}$ of Algorithm~\ref{algo:VMD2}, for any iteration $t'\in\{1,2,...,T-1\}$ and $\delta\in(0,1)$, suppose that $\|V^{\pi_0}-V^*\|_\infty\leq u_k$,~\eqref{induction3_stoc1} and~\eqref{induction4_stoc1} for $t=1,2,...,t'-1$ are satisfied, and the algorithmic parameter
    \begin{align}
        m_{k,2}\geq \big(4\sqrt{2}c\big)^2(1-\gamma)^{-2}\ln\big[4(T-1)|\bS||\bA|\delta^{-1}\big],
        \label{param:lem_induc4_stoc1}
    \end{align}
    then~\eqref{induction4_stoc1} holds for $t=t'$ with probability at least $1-\delta/[2(T-1)]$.
    \label{lem:induc4_stoc1}
\end{lemma}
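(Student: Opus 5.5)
Part (a) of \eqref{induction4_stoc1} for $t=t'$ will come from Lemma~\ref{lem:error_pv_stoc1}$(c)$ with confidence $\delta/[2(T-1)]$, once we show a deterministic bound $\|V_{t'}-V_0\|_\infty\le u_k$ up to a constant factor (say $\|V_{t'}-V_0\|_\infty\le 2u_k$ or similar), conditionally on the earlier events. Part (b) will then follow deterministically from (a), the earlier inductive hypotheses, Lemma~\ref{lem:induc1_stoc1} and the approximate monotonicity argument \eqref{eq:approxmonotonebound}. The key point is that $V_{t'}$ and $V_0$ are measurable with respect to samples drawn before iteration $t'$. The fresh samples $\hat{\P}^{(m_{k,2})}$ used at iteration $t'$ are therefore independent of $V_{t'}-V_0$, so the Hoeffding bound applies conditionally.

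\textbf{Step 1: bounding $\|V_{t'}-V_0\|_\infty$.} By \eqref{eq2:algoVMD2}, $V_0\ge V_1\ge\cdots\ge V_{t'}$, so it suffices to lower bound $V_{t'}$. From \eqref{induction4_stoc1}(b) at $t=t'-1$ (or \eqref{induction3_stoc1}(b) if $t'=1$), we get $V_{t'}\ge V^{\pi_{t'}}-\tfrac{5u_k}{2c}\mathbf 1\ge V^*-\tfrac{5u_k}{2c}\mathbf 1$. For $k=0$ we have $V_0=\hat V_0$ and $\|\hat V_0-V^*\|_\infty\le u_0$. For $k>0$, \eqref{induction3_stoc1}(a) and the hypothesis $\|V^{\pi_0}-V^*\|_\infty\le u_k$ give $V_0\le V^*+(1+1/c)u_k\mathbf 1$. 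Hence $0\le V_0-V_{t'}\le (1+\tfrac1c+\tfrac{5}{2c})u_k\le 2u_k$ since $c\ge5$. Plugging this into \eqref{error_pvkv0_stoc1} with $\delta$ replaced by $\delta/[2(T-1)]$, so that $\ln(2|\bS||\bA|\cdot 2(T-1)/\delta)=\ln[4(T-1)|\bS||\bA|\delta^{-1}]$, gives error at most $\sqrt2\,m_{k,2}^{-1/2}\cdot 2u_k\sqrt{\ln(\cdot)}$. Under \eqref{param:lem_induc4_stoc1} this is at most $\tfrac{2u_k(1-\gamma)}{4c}=\tfrac{(1-\gamma)u_k}{2c}$, which is (a).

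\textbf{Step 2: part (b).} By Lemma~\ref{lem:induc1_stoc1} and \eqref{eq:approxmonotonebound}, $V_{t'+1}\ge V^{\pi_{t'+1}}-(I-\gamma\P_{\pi_{t'+1}})^{-1}\beta_{t'+1}$ with $\beta_{t'+1}(s)=-\langle\Tilde Q^{\tau}-Q^{\tau},\pi_{t'+1}\rangle(s)$ and $\tau=\tau(s,t'+1)\le t'$. Using decomposition \eqref{error_decompose_stoc1}, $\beta_{t'+1}(s)\le \gamma\langle|A|+|B^{\tau}|,\pi_{t'+1}(\cdot|s)\rangle$. By (a) at all $t\le t'$, $|B^\tau|\le\tfrac{(1-\gamma)u_k}{2c}$, which after applying $(I-\gamma\P)^{-1}$ contributes at most $\tfrac{u_k}{2c}$. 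The $A$-part is bounded exactly as in \eqref{eq18:lem_monotone_stoc1}, but with $\pi_1$ replaced by $\pi_{t'+1}$. We relate $\sigma_{V_0}$ (or $\sigma_{V^{\pi_0}}$) to $\sigma_{V^{\pi_{t'+1}}}$ via the triangle inequality for $\sqrt{\sigma}$, which needs $\|V^{\pi_{t'+1}}-V_0\|_\infty$ (resp. $\|V^{\pi_{t'+1}}-V^{\pi_0}\|_\infty$) to be $O(u_k)$. To get this, we first use a crude bound as in \eqref{eq11:lem_monotone_stoc1}–\eqref{eq12:lem_monotone_stoc1} to obtain $V_{t'+1}\ge V^{\pi_{t'+1}}-(w+\tfrac{u_k}{2c})\mathbf 1$. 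Combined with $V^{\pi_{t'+1}}\ge V^*$ and the bounds on $V_0$ above, this gives a sandwich analogous to \eqref{eq13:lem_monotone_stoc1}. Lemma~\ref{lem:total_variance} then bounds the $A$-contribution by $\tfrac{(2+\sqrt2+O(1/c))u_k}{2c}$ minus the slack.

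\textbf{Main obstacle.} The tight accounting of constants in Step 2 is the hard part. The target is $\tfrac{5u_k}{2c}$, while the $A$-part alone was already shown to be at most $\tfrac{(2+\sqrt2+1/c)u_k}{2c}$ and the $B$-part adds $\tfrac{u_k}{2c}$. I expect the sum $\tfrac{(3+\sqrt2+1/c)u_k}{2c}$ to still fit below $\tfrac{5u_k}{2c}$, because $3+\sqrt2+1/5<5$. This requires care that the enlarged sandwich width ($w+\tfrac{u_k}{2c}$ in place of $w$) only perturbs the $O(1/c^2)$ terms; I would redo the chain in \eqref{eq18:lem_monotone_stoc1} with the extra term to check this. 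Finally, a union bound is unnecessary here, since the lemma is stated per iteration with probability $1-\delta/[2(T-1)]$.
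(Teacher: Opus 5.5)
Your proposal is correct and follows the paper's proof essentially step for step. The paper also bounds $\|V_{t'}-V_0\|_\infty\le 2u_k$ and applies Lemma~\ref{lem:error_pv_stoc1}$(c)$ at confidence $\delta/[2(T-1)]$. It then uses the crude width $w'=w+\tfrac{u_k}{2c}$, which is exactly your enlarged sandwich width, before redoing the total-variance chain of \eqref{eq18:lem_monotone_stoc1}.

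Carrying out that chain gives a total of $\tfrac{(3+\sqrt{2}+3/(2c))u_k}{2c}$ rather than your guessed $\tfrac{(3+\sqrt{2}+1/c)u_k}{2c}$. This is still below $\tfrac{5u_k}{2c}$ for $c\ge 5$. Your separate treatment of $k=0$ in Step 1, via $\|\hat V_0-V^*\|_\infty\le u_0$, is slightly cleaner than the paper's. The paper invokes a bound on $\|V_0-V^{\pi_0}\|_\infty$ that \eqref{induction3_stoc1}$(a)$ only supplies for $k>0$.
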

\begin{proof}
    We first show that~\eqref{induction4_stoc1}$(a)$ holds for $t=t'$ with probability at least $1-\delta/[2(T-1)]$. For this purpose, let us bound $\|V_{t'}-V_0\|_\infty$. Notice that
    \begin{align}
        V_0\geq V_{t'}\geq V^{\pi_{t'}}-\tfrac{5u_k}{2c}\cdot\mathbf{1}_{\bS}\geq V^*-\tfrac{u_k}{2}\cdot\mathbf{1}_{\bS}\geq V^{\pi_0}-\tfrac{3}{2}u_k\cdot\mathbf{1}_{\bS}\geq V_0-2u_k\cdot\mathbf{1}_{\bS},
        \label{eq24:lem_monotone_stoc1}
    \end{align}
    where the second inequality uses the inductive hypothesis~\eqref{induction3_stoc1}$(b)$ or~\eqref{induction4_stoc1}$(b)$ for $t=t'-1$, the third one holds due to $c\geq 5$, the fourth one applies $\|V^{\pi_0}-V^*\|_\infty\leq u_k$, and the fifth one utilizes $\|V_0-V^{\pi_0}\|_\infty\leq u_k/2$ by the inductive hypothesis~\eqref{induction3_stoc1}$(a)$. It then follows from~\eqref{eq24:lem_monotone_stoc1} that $\|V_{t'}-V_0\|_\infty\leq 2u_k$, which, together with Lemma~\ref{lem:error_pv_stoc1}$(c)$ and the choice of $m_{k,2}$ in~\eqref{param:lem_induc4_stoc1}, implies that~\eqref{induction4_stoc1}$(a)$ holds for $t=t'$ with probability at least $1-\delta/[2(T-1)]$.
    
    Next, we show that~\eqref{induction4_stoc1}$(b)$ holds for $t=t'$. In view of~\eqref{eq:approxmonotonebound} and Lemma~\ref{lem:induc1_stoc1}, it suffices to show that
    \begin{align}
        \big\|(I-\gamma\P_{\pi_{t'+1}})^{-1}\beta_{t'+1}\big\|_\infty\leq\tfrac{5u_k}{2c},
        \label{eq25:lem_monotone_stoc1}
    \end{align}
    where each entry $\beta_{t'+1}(s)=\langle \Tilde{Q}^{\tau(s,t'+1)}(s,\cdot)-Q^{\tau(s,t'+1)}(s,\cdot),\pi_{t'+1}(\cdot|s)\rangle$ and $\tau(s,t'+1)\in\{0,1,...,t'\}$ by Lemma~\ref{lem:induc1_stoc1}. For any $s\in\bS$, let $\tau=\tau(s,t'+1)$ for notational simplicity, and we have
    \begin{align}
        &|\beta_{t'+1}(s)|\leq\big\langle \big|\Tilde{Q}^{\tau}(s,\cdot)-Q^{\tau}(s,\cdot)\big|,\pi_{t'+1}(\cdot|s)\big\rangle \nn\\
        &\leq\gamma\big\langle \big|\Tilde{\P}^0_sV_0-\P_s V_0\big|,\pi_{t'+1}(\cdot|s)\big\rangle+\gamma\big\langle \big|\hat{\P}^{(m_{k,2})}_s(V_{\tau}-V_0)-\P_s (V_{\tau}-V_0)\big|,\pi_{t'+1}(\cdot|s)\big\rangle \nn\\
        &\leq\gamma\big\langle \big|\Tilde{\P}^0_sV_0-\P_s V_0\big|,\pi_{t'+1}(\cdot|s)\big\rangle+\tfrac{1}{2c}(1-\gamma)u_k,
        \label{eq26:lem_monotone_stoc1}
    \end{align}
    where $|\cdot|$ denotes the elementwise absolute value, the second inequality follows from the construction of $\Tilde{\P}^{\tau}_sV_{\tau}$ in Algorithm~\ref{algo:VMD2}, and the third inequality uses the inductive hypothesis~\eqref{induction4_stoc1}$(a)$ for $t=\tau$.
    Combining~\eqref{eq10:lem_monotone_stoc1} with~\eqref{eq26:lem_monotone_stoc1}, we have
    \begin{align}
        \big\|(I-\gamma\P_{\pi_{t'+1}})^{-1}\beta_{t'+1}\big\|_\infty&\leq (1-\gamma)^{-1}\big[w_0+\tfrac{1}{2c}(1-\gamma)u_k\big]=\tfrac{1}{2c}\big[(1-\gamma)^{-\tfrac{1}{2}}+1+u_k\big]=:w',
        \label{eq27:lem_monotone_stoc1}
    \end{align}
    which, together with~\eqref{eq:approxmonotonebound} and Lemma~\ref{lem:induc1_stoc1}, yields
    \begin{align}
        V_{t'+1}\geq V^{\pi_{t'+1}}-w'.
        \label{eq28:lem_monotone_stoc1}
    \end{align}
    Then we bound $\|V^{\pi_{t'+1}}-V_0\|_\infty$ if epoch $k=0$, or $\|V^{\pi_{t'+1}}-V^{\pi_0}\|_\infty$ if $k>0$, using
    \begin{align}
    \begin{cases}
        &V_0+w'\cdot\mathbf{1}_{\bS}\geq V_{t'+1}+w'\cdot\mathbf{1}_{\bS}\overset{(a)}{\geq} V^{\pi_{t'+1}}\geq V^*\overset{(b)}{\geq}V_0-u_0\cdot\mathbf{1}_{\bS} \quad\text{if } k=0; \\
        &V^{\pi_0}+(u_k+w')\cdot\mathbf{1}_{\bS}\overset{(c)}{\geq} V_0+w'\cdot\mathbf{1}_{\bS}\geq V_{t'+1}+w'\cdot\mathbf{1}_{\bS}\overset{(d)}{\geq} V^{\pi_{t'+1}}\geq V^*\overset{(e)}{\geq} V^{\pi_0}-u_k\cdot\mathbf{1}_{\bS} \quad\text{if } k>0,
    \end{cases}
    \label{eq29:lem_monotone_stoc1}
    \end{align}
    where $(a)$ and $(d)$ follows from~\eqref{eq28:lem_monotone_stoc1}, $(b)$ applies $\|V_0-V^*\|_\infty\leq u_0$, $(c)$ uses $\|V_0-V^{\pi_0}\|_\infty\leq u_k/c\leq u_k$ by the inductive hypothesis~\eqref{induction3_stoc1}$(a)$, and $(e)$ utilizes $\|V^{\pi_0}-V^*\|_\infty\leq u_k$. By~\eqref{eq29:lem_monotone_stoc1}, we obtain
    \begin{align}
        \big\|V^{\pi_{t'+1}}-V_0\big\|_\infty\leq u_0+w' \quad\text{if } k=0; \qquad \big\|V^{\pi_{t'+1}}-V^{\pi_0}\big\|_\infty\leq u_k+w' \quad\text{if } k>0.
        \label{eq29.5:lem_monotone_stoc1}
    \end{align}
    Following the same procedure from~\eqref{eq14:lem_monotone_stoc1} to~\eqref{eq17:lem_monotone_stoc1}, we obtain
    \begin{align}
        \big|\Tilde{\P}_{s,a}^{0\tr}V_0-\P_{s,a}^\tr V_0\big|\leq \tfrac{1}{2c}\Big[\tfrac{(1-\gamma)^{3/2}}{(1+\Bar{h})}\Big(\sqrt{\sigma_{V^{\pi_{t'+1}}}(s,a)}+u_k+w'\Big)+(1-\gamma)\Big]\min\{u_k,1\} \quad \forall (s,a)\in\bS\times\bA.
        \label{eq30:lem_monotone_stoc1}
    \end{align}
    Finally, combining~\eqref{eq26:lem_monotone_stoc1} and~\eqref{eq30:lem_monotone_stoc1}, we have
    \begin{align}
        &\big\|(I-\gamma\P_{\pi_{t'+1}})^{-1}\beta_{t'+1}\big\|_\infty\leq\gamma\big\|(I-\gamma\P_{\pi_{t'+1}})^{-1}\big\langle \Tilde{P}^0V_0-PV_0,\pi_{t'+1}\big\rangle\big\|_\infty+\tfrac{1}{2c}u_k \nn\\
        &\leq\tfrac{\gamma(1-\gamma)^{3/2}}{2c(1+\Bar{h})}\big[\big\|(I-\gamma\P_{\pi_{t'+1}})^{-1}\left\langle\sqrt{\sigma_{V^{\pi_{t'+1}}}},\pi_{t'+1}\right\rangle\big\|_\infty+(1-\gamma)^{-1}(u_k+w')\big]\min\{u_k,1\}+\tfrac{1}{c}u_k \nn\\
        &\leq\tfrac{\gamma(1-\gamma)^{3/2}}{2c(1+\Bar{h})}\Big\{(1+\Bar{h})(1+\gamma)^{\tfrac{1}{2}}\gamma^{-1}(1-\gamma)^{-\tfrac{3}{2}}+(1-\gamma)^{-1}\big[\tfrac{2c+1}{2c}u_k+\tfrac{1}{2c}(1-\gamma)^{-\tfrac{1}{2}}+\tfrac{1}{2c}\big]\Big\}\min\{u_k,1\}+\tfrac{1}{c}u_k \nn\\
        &\leq\tfrac{\sqrt{2}}{2c}u_k+\tfrac{1}{2c}\times\tfrac{2c+3}{2c}u_k+\tfrac{1}{c}u_k=\tfrac{3+\sqrt{2}+3/(2c)}{2c}u_k\leq\tfrac{5u_k}{2c},
        \label{eq31:lem_monotone_stoc1}
    \end{align}
    where the third inequality uses~\eqref{law_of_total_var}, Lemma~\ref{lem:total_variance} and the definition of $w'$ in~\eqref{eq27:lem_monotone_stoc1}. This implies~\eqref{eq25:lem_monotone_stoc1} and completes the proof.
\end{proof}
\vgap

Now we conclude the induction by combining Lemma~\ref{lem:induc2_stoc1} and Lemma~\ref{lem:induc4_stoc1}.

\begin{lemma}
    For any epoch $k\in\{0,1,...,K-1\}$ of Algorithm~\ref{algo:VMD2}, $\delta\in(0,1)$ and $c\geq 5$, if $\|V^{\pi_0}-V^*\|_\infty\leq u_k$ holds, and the algorithmic parameters are set to
    \begin{align}
        &m_{k,1}\geq\tfrac{[10\sqrt{2}c(1+\Bar{h})]^2\ln(12|\bS||\bA|\delta^{-1})}{(1-\gamma)^{3}\min\{u_k,1\}^{2}}  \quad \text{and} \quad m_{k,2}\geq\tfrac{(4\sqrt{2}c)^2\ln[4(T-1)|\bS||\bA|\delta^{-1}]}{(1-\gamma)^{2}},
        \label{param:lem_monotone_stoc1}
    \end{align}
    then we have
    \begin{align}
        &(a) \ \begin{cases}
            \big|\Tilde{\P}_{s,a}^{0\tr}V_0-\P_{s,a}^\tr V_0\big|\leq\tfrac{\varepsilon_k(V_0)}{2c} \quad \forall (s,a)\in\bS\times\bA \quad\text{if } k=0; \\
            \big|\Tilde{\P}_{s,a}^{0\tr}V_0-\P_{s,a}^\tr V_0\big|\leq\tfrac{\varepsilon_k(V^{\pi_0})}{2c} \quad \forall (s,a)\in\bS\times\bA \quad\text{and}\quad \|V_0-V^{\pi_0}\|_\infty\leq\tfrac{u_k}{c} \quad\text{if } k>0;
        \end{cases} \nn\\
        &(b) \ \big|\hat{\P}_{s,a}^{(m_{k,2})\tr}(V_t-V_0)-\P_{s,a}^\tr (V_t-V_0)\big|\leq \tfrac{1}{2c}(1-\gamma)u_k \quad \forall (s,a)\in\bS\times\bA \quad\text{for }t=1,2,...,T-1; \nn\\
        &(c) \ V_t\geq V^{\pi_t}-\tfrac{5u_k}{2c}\cdot\mathbf{1}_{\bS}\quad\text{for }t=1,2,...,T
        \label{res:lem_monotone_stoc1}
    \end{align}
    with probability at least $1-\delta$, where $\varepsilon_k(\cdot)$ is defined in~\eqref{def:varepsilon1_stoc1}.
    \label{lem:monotone_stoc1}
\end{lemma}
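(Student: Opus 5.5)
The plan is to combine Lemma~\ref{lem:induc2_stoc1} and Lemma~\ref{lem:induc4_stoc1} by induction over the iterations $t$ of the epoch, and to control the total failure probability with a union bound. First, note that the choice of $m_{k,1}$ in~\eqref{param:lem_monotone_stoc1} is exactly condition~\eqref{param:lem_induc2_stoc1}. So, given $\|V^{\pi_0}-V^*\|_\infty\leq u_k$, Lemma~\ref{lem:induc2_stoc1} shows that the event $E_0$ on which~\eqref{induction3_stoc1} holds has probability at least $1-\delta/2$. On $E_0$, part~\eqref{res:lem_monotone_stoc1}$(a)$ holds, and so does part $(c)$ for $t=1$.

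Next, define for $t'=1,\dots,T-1$ the event $E_{t'}$ on which~\eqref{induction4_stoc1} holds at $t=t'$. The choice of $m_{k,2}$ in~\eqref{param:lem_monotone_stoc1} is exactly~\eqref{param:lem_induc4_stoc1}. On $E_0\cap E_1\cap\cdots\cap E_{t'-1}$, the hypotheses of Lemma~\ref{lem:induc4_stoc1} at $t'$ are satisfied. That lemma then gives $\P\big(E_{t'}^c\cap E_0\cap\cdots\cap E_{t'-1}\big)\leq\delta/[2(T-1)]$; I would read its statement as a conditional guarantee. This reading is legitimate because the fresh samples $\hat{\P}^{(m_{k,2})}$ used at iteration $t'$ are independent of $V_{t'}$ and $V_0$, which are determined by the earlier samples. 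Hoeffding in Lemma~\ref{lem:error_pv_stoc1}$(c)$ therefore applies conditionally on the past.

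With these bounds in hand, I would write the complement of $\bigcap_{t=0}^{T-1}E_t$ as the disjoint union of $E_0^c$ and the events $E_{t'}^c\cap\bigcap_{j<t'}E_j$ for $t'=1,\dots,T-1$. The total failure probability is then at most
\[
\tfrac{\delta}{2}+(T-1)\cdot\tfrac{\delta}{2(T-1)}=\delta .
\]
On the intersection of all the $E_t$, part $(b)$ holds for $t=1,\dots,T-1$, and part $(c)$ holds for $t=1$ (from $E_0$) and for $t=2,\dots,T$ (from $E_{t'}$ at $t'=t-1$). The case $T=1$ is trivial, since only $E_0$ is needed.

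The only delicate point I expect is this conditioning and independence step. Lemma~\ref{lem:induc4_stoc1} is stated with its hypotheses treated as given, whereas those hypotheses are themselves random events. I would handle this by making explicit the filtration generated by the samples up to iteration $t'-1$. The event $E_0\cap\cdots\cap E_{t'-1}$ is measurable with respect to this filtration, and the deviation bound at $t'$ holds conditionally with probability at least $1-\delta/[2(T-1)]$ on it. With that in place, everything else is direct bookkeeping.
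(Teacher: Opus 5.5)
Your proposal is correct and follows the paper's approach. The paper's proof is the single line ``follows directly from Lemma~\ref{lem:induc2_stoc1} and Lemma~\ref{lem:induc4_stoc1}.'' Your union bound $\delta/2+(T-1)\cdot\delta/[2(T-1)]=\delta$, together with the filtration argument (fresh $m_{k,2}$ samples at iteration $t'$ are independent of the $\mathcal{F}_{t'-1}$-measurable $V_{t'}-V_0$), spells out what the paper leaves implicit.
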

\begin{proof}
    This result follows directly from Lemma~\ref{lem:induc2_stoc1} and Lemma~\ref{lem:induc4_stoc1}.
\end{proof}
\vgap

Lemma~\ref{lem:monotone_stoc1} implies that, for every epoch $k$ of Algorithm~\ref{algo:VMD2}, with high probability, $V_t$ bounds $V^{\pi_t}$ up to an error of order $\cO(u_k)$ for all $t=1,2,...,T$. Recall that Lemma~\ref{lem:converge_stoc1} bounds the cumulative sum of $V_t-V^*$. With the help of Lemma~\ref{lem:converge_stoc1} and Lemma~\ref{lem:monotone_stoc1}, we obtain the following single-epoch convergence result for Algorithm~\ref{algo:VMD2}.

\begin{proposition}
    For any epoch $k\in\{0,1,...,K-1\}$ of Algorithm~\ref{algo:VMD2}, $\delta\in(0,1)$ and some $D>0$, suppose that $\|V^{\pi_0}-V^*\|_\infty\leq u_k$ and $\|(I-\gamma\P_{\pi^*})^{-1}D_{\Tilde{\pi}_0}^{\pi^*}\|_\infty\leq D/(1-\gamma)$ hold, and the algorithmic parameters are set to
    \begin{align}
        &T_k=\Big\lceil\tfrac{28}{1-\gamma}\Big\rceil,\quad \eta_t=\tfrac{D}{7u_k} \quad\text{for}\ t=0,1,...,T_k-1, \nn\\
        &m_{k,1}\geq\tfrac{[250\sqrt{2}(1+\Bar{h})]^2\ln(12|\bS||\bA|\delta^{-1})}{(1-\gamma)^{3}\min\{u_k,1\}^{2}}\quad\text{and}\quad m_{k,2}\geq\tfrac{(100\sqrt{2})^2\ln[4(T-1)|\bS||\bA|\delta^{-1}]}{(1-\gamma)^{2}},
        \label{param:prop_stoc1}
    \end{align}
    then with probability at least $1-\delta$, we have
    \begin{align}
        (a) \ \big\|V^{\pi_T}-V^*\big\|_\infty\leq\tfrac{u_k}{2}=u_{k+1}; \quad (b) \ \big\|(I-\gamma\P_{\pi^*})^{-1}D_{\Tilde{\pi}_t}^{\pi^*}\big\|_\infty\leq\tfrac{2D}{1-\gamma} \quad\text{for all}\quad t=0,1,...,T.
        \label{res:prop_stoc1}
    \end{align}
    \label{prop:stoc1}
\end{proposition}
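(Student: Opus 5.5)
The plan is to invoke Lemma~\ref{lem:monotone_stoc1} with $c=25$. This choice matches the sample sizes in~\eqref{param:prop_stoc1}, since $250\sqrt{2}=10\sqrt{2}\cdot 25$ and $100\sqrt{2}=4\sqrt{2}\cdot 25$. On an event of probability at least $1-\delta$ it gives three things:
\begin{itemize}
\item the bounds~\eqref{res:lem_monotone_stoc1}$(a),(b)$ on the estimation errors;
\item the approximate upper bound $V_t\geq V^{\pi_t}-\tfrac{u_k}{10}\mathbf{1}_{\bS}\geq V^*-\tfrac{u_k}{10}\mathbf{1}_{\bS}$ for $t=1,\dots,T$;
\item $\|V_0-V^{\pi_0}\|_\infty\leq u_k/25$ when $k>0$.
\end{itemize}
Everything below is deterministic on this event. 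Because of the $\min$ in~\eqref{eq2:algoVMD2}, we also have $V_0\geq V_1\geq\cdots\geq V_T$.

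Next I apply Lemma~\ref{lem:converge_stoc1} with $\eta=D/(7u_k)$ for an arbitrary $\Bar{t}\in\{0,\dots,T-1\}$, and multiply both sides by the monotone operator $(I-\gamma\P_{\pi^*})^{-1}$. The deterministic right-hand-side terms are bounded as follows.
\begin{itemize}
\item The Bregman term contributes at most $\tfrac{7u_k}{D}\cdot\tfrac{D}{1-\gamma}=\tfrac{7u_k}{1-\gamma}$.
\item The term $\gamma(I-\gamma\P_{\pi^*})^{-1}\P_{\pi^*}(V_0-V^*)$ contributes at most $\tfrac{\gamma}{1-\gamma}\|V_0-V^*\|_\infty\leq\tfrac{(1+1/25)u_k}{1-\gamma}$. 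Here I use $\|V_0-V^*\|_\infty\leq u_k$ when $k=0$, and $\|V_0-V^*\|_\infty\leq u_k+u_k/25$ when $k>0$.
\end{itemize}
For the noise term $\sum_{t=0}^{\Bar{t}}(I-\gamma\P_{\pi^*})^{-1}\langle\Tilde{Q}^t-Q^t,\pi^*\rangle$ I use the decomposition~\eqref{error_decompose_stoc1}.
\begin{itemize}
\item Part $B$: by~\eqref{res:lem_monotone_stoc1}$(b)$ each summand is at most $\gamma(1-\gamma)^{-1}\tfrac{(1-\gamma)u_k}{50}\leq u_k/50$.
\item Part $A$: I mimic~\eqref{eq15:lem_monotone_stoc1}--\eqref{eq18:lem_monotone_stoc1}, but with $\pi^*$ in place of $\pi_1$. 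First I convert $\sqrt{\sigma_{V_0}}$ (or $\sqrt{\sigma_{V^{\pi_0}}}$) into $\sqrt{\sigma_{V^*}}+O(u_k)$, using $\|V_0-V^*\|_\infty=O(u_k)$ (respectively $\|V^{\pi_0}-V^*\|_\infty\leq u_k$). Then I apply~\eqref{law_of_total_var} and Lemma~\ref{lem:total_variance} with $\pi=\pi^*$ and $V^{\pi^*}=V^*$. This gives a per-iteration bound of about $\tfrac{\sqrt{2}+1+o(1)}{50}u_k\approx 0.05u_k$.
\end{itemize}
Summed over at most $T=\lceil 28/(1-\gamma)\rceil$ iterations, the noise is at most roughly $\tfrac{2u_k}{1-\gamma}$.

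For the left-hand side I lower bound the value terms using monotonicity together with $V_t-V^*\geq -\tfrac{u_k}{10}\mathbf{1}_{\bS}$.
\begin{itemize}
\item \textbf{Part $(a)$.} Take $\Bar{t}=T-1$. Since $V_t\geq V_T$ and $V_T-V^*+\tfrac{u_k}{10}\mathbf{1}\geq 0$, the LHS value terms are at least $T(V_T-V^*)-\tfrac{u_k}{10}(1-\gamma)^{-1}\mathbf{1}$. This yields $T\|V_T-V^*\|\lesssim (7+1.04+2+0.1)u_k/(1-\gamma)$, hence $V_T-V^*\leq 0.37u_k$. Then $V^{\pi_T}-V^*\leq V_T-V^*+\tfrac{u_k}{10}\leq u_k/2$, which together with $V^{\pi_T}\geq V^*$ gives $(a)$.
\item \textbf{Part $(b)$.} For each $t=\Bar{t}+1\leq T$, drop the value terms using only $V_t-V^*\geq-\tfrac{u_k}{10}$. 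Their total is at least $-(T+(1-\gamma)^{-1})\tfrac{u_k}{10}\approx-\tfrac{3u_k}{1-\gamma}$. This gives $\tfrac{7u_k}{D}(I-\gamma\P_{\pi^*})^{-1}D_{\Tilde{\pi}_t}^{\pi^*}\leq\tfrac{(7+1.04+2+3)u_k}{1-\gamma}\leq\tfrac{14u_k}{1-\gamma}$, which is $(b)$. The case $t=0$ is the hypothesis.
\end{itemize}

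The main obstacle is the term-$A$ bound. The per-step estimation error of $\Tilde{\P}^0V_0$ is only $O(u_k(1-\gamma)^{-1/2})$ after applying $(I-\gamma\P_{\pi^*})^{-1}$ crudely, which would be fatal once multiplied by $T$. The total-variance argument with $\pi^*$ is essential to reduce it to $O(u_k/c)$. The constants in $(b)$ are also tight, so I will carefully track the factor $\min\{u_k,1\}$, the $1/(2c)$ factors and the $\gamma$ factors, following the arithmetic pattern of~\eqref{eq18:lem_monotone_stoc1}.
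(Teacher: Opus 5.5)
Your proposal is correct and follows essentially the same route as the paper: Lemma~\ref{lem:monotone_stoc1} with $c=25$, then Lemma~\ref{lem:converge_stoc1} multiplied by $(I-\gamma\P_{\pi^*})^{-1}$, with the noise $\langle\Tilde{Q}^t-Q^t,\pi^*\rangle$ controlled through the total-variance bound of Lemma~\ref{lem:total_variance} applied with $\pi^*$. One small correction: the shift term $\tfrac{\gamma(1-\gamma)^{1/2}}{50(1+\Bar{h})}\|V^{\pi_0}-V^*\|_\infty\min\{u_k,1\}$ is bounded only by $u_k/50$, not $o(u_k)$, so the per-iteration noise is $\tfrac{3+\sqrt{2}}{50}u_k\leq\tfrac{u_k}{10}$, as in the paper; your constants still close, giving about $0.49u_k$ in $(a)$ and a right-hand side below $\tfrac{14u_k}{1-\gamma}$ in $(b)$.
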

\begin{proof}
    First, we know from Lemma~\ref{lem:converge_stoc1} and the choice of $\eta_t$ in~\eqref{param:prop_stoc1} that
    \begin{align}
        (V_T-V^*)+\tsum_{t=1}^{T-1}(I-\gamma\P_{\pi^*})(V_t-V^*)\leq\tfrac{7u_k}{D}D_{\Tilde{\pi}_0}^{\pi^*}+\gamma\P_{\pi^*}(V_0-V^*)+\tsum_{t=0}^{T-1}\big\langle\Tilde{Q}^t-Q^t,\pi^*\big\rangle.
        \label{eq1:prop_stoc1}
    \end{align}
    Since $V_0\geq V_1\geq\cdots\geq V_T$ holds, and by Lemma~\ref{lem:monotone_stoc1}, $V_T\geq V^{\pi_T}-(u_k/10)\cdot\mathbf{1}_{\bS}$ is satisfied with probability at least $1-\delta$, then~\eqref{eq1:prop_stoc1} can be simplified to
    \begin{align}
        &(I-\gamma\P_{\pi^*})^{-1}\big(V^{\pi_T}-\tfrac{u_k}{10}\cdot\mathbf{1}_{\bS}-V^*\big)+(T-1)\big(V^{\pi_T}-\tfrac{u_k}{10}\cdot\mathbf{1}_{\bS}-V^*\big) \nn\\
        &\leq (I-\gamma\P_{\pi^*})^{-1}\big[\tfrac{7u_k}{D}D_{\Tilde{\pi}_0}^{\pi^*}+\gamma\P_{\pi^*}(V_0-V^*)+\tsum_{t=0}^{T-1}\big\langle\Tilde{Q}^t-Q^t,\pi^*\big\rangle\big].
        \label{eq2:prop_stoc1}
    \end{align}
    By utilizing $(I-\gamma\P_{\pi^*})^{-1}(V^{\pi_T}-V^*)\geq V^{\pi_T}-V^*$ and moving the terms involving $(u_k/10)\cdot\mathbf{1}_{\bS}$ to the right-hand side in~\eqref{eq2:prop_stoc1}, we obtain
    \begin{align}
        &\left\|V^{\pi_T}-V^*\right\|_\infty\leq\tfrac{1}{T}\Big(\underbrace{\tfrac{7u_k}{D}\big\|(I-\gamma\P_{\pi^*})^{-1}D_{\Tilde{\pi}_0}^{\pi^*}\big\|_\infty}_{A_1}+\underbrace{\gamma\big\|(I-\gamma\P_{\pi^*})^{-1}\P_{\pi^*}(V_0-V^*)\big\|_\infty}_{A_2} \nn\\
        &+\underbrace{\tsum_{t=0}^{T-1}\big\|(I-\gamma\P_{\pi^*})^{-1}\big\langle\Tilde{Q}^t-Q^t,\pi^*\big\rangle\big\|_\infty}_{A_3}+\underbrace{\big\|\tfrac{u_k}{10}\big[(T-1)I+(I-\gamma\P_{\pi^*})^{-1}\big]\cdot\mathbf{1}_{\bS}\big\|_\infty}_{A_4}\Big).
        \label{eq3:prop_stoc1}
    \end{align}
    Now, let us bound the terms $A_1/T,A_2/T,A_3/T$ and $A_4/T$ in~\eqref{eq3:prop_stoc1}, respectively. First, it is straightforward to check from the choice of $T$ in~\eqref{param:prop_stoc1} that
    \begin{align}
        \tfrac{A_1}{T}\leq\tfrac{u_k}{7(1-\gamma)T}\leq\tfrac{u_k}{4},\quad \tfrac{A_2}{T}\leq\tfrac{\|V_0-V^*\|_\infty}{(1-\gamma)T}\leq\tfrac{13u_k}{350},\quad\text{and}\quad \tfrac{A_4}{T}\leq\tfrac{u_k}{10}\big(\tfrac{T-1}{T}+\tfrac{1}{(1-\gamma)T}\big)\leq\tfrac{29u_k}{280},
        \label{eq4:prop_stoc1}
    \end{align}
    where the first relation utilizes $\|(I-\gamma\P_{\pi^*})^{-1}D_{\Tilde{\pi}_0}^{\pi^*}\|_\infty\leq (1-\gamma)^{-1}D$, and the second relation applies $\|V_0-V^*\|_\infty\leq u_k$ when the epoch $k=0$, and $\|V_0-V^*\|_\infty\leq\|V^{\pi_0}-V^*\|_\infty+\|V_0-V^{\pi_0}\|_\infty\leq(26/25)u_k$ (see~\eqref{res:lem_monotone_stoc1}$(a)$ in Lemma~\ref{lem:monotone_stoc1}) when $k>0$. Moreover, for any $t\in\{0,1,...,T-1\}$ in the term $A_3$ of~\eqref{eq3:prop_stoc1}, we have
    \begin{align}
        &\big\|(I-\gamma\P_{\pi^*})^{-1}\big\langle\Tilde{Q}^t-Q^t,\pi^*\big\rangle\big\|_\infty\overset{(a)}{=}\big\|(I-\gamma\P_{\pi^*})^{-1}\gamma\big\langle\big(\Tilde{P}^0V_0-PV_0\big)+\big[\hat{P}^{(m_{k,2})}(V_t-V_0)-P(V_t-V_0)\big],\pi^*\big\rangle\big\|_\infty \nn\\
        &\leq\gamma\big\|(I-\gamma\P_{\pi^*})^{-1}\big\langle\Tilde{P}^0V_0-PV_0,\pi^*\big\rangle\big\|_\infty+\gamma\big\|(I-\gamma\P_{\pi^*})^{-1}\big\langle\hat{P}^{(m_{k,2})}(V_t-V_0)-P(V_t-V_0),\pi^*\big\rangle\big\|_\infty \nn\\
        &\overset{(b)}{\leq}\gamma\Big\|(I-\gamma\P_{\pi^*})^{-1}\Big[\tfrac{(1-\gamma)^{3/2}}{50(1+\Bar{h})}\big(\big\langle\sqrt{\sigma_{V^*}},\pi^*\big\rangle+u_k\cdot\mathbf{1}_{\bS}\big)\min\{u_k,1\}+\tfrac{1-\gamma}{50}\min\{u_k,1\}\cdot\mathbf{1}_{\bS}\Big]\Big\|_\infty+\tfrac{u_k}{50} \nn\\
        &\leq \tfrac{\gamma(1-\gamma)^{3/2}}{50(1+\Bar{h})}\big\|(I-\gamma\P_{\pi^*})^{-1}\big\langle\sqrt{\sigma_{V^*}},\pi^*\big\rangle\big\|_\infty u_k+\tfrac{3u_k}{50}\overset{(c)}{\leq} \tfrac{\gamma(1-\gamma)^{3/2}}{50(1+\Bar{h})}\times\tfrac{(1+\Bar{h})(1+\gamma)^{1/2}}{\gamma(1-\gamma)^{3/2}}u_k+\tfrac{3}{50}u_k\leq\tfrac{1}{10}u_k,
        \label{eq7:prop_stoc1}
    \end{align}
    where $(a)$ follows from the construction of $\Tilde{P}^tV_t$ in Algorithm~\ref{algo:VMD2}, $(b)$ uses the error bounds in~\eqref{res:lem_monotone_stoc1}$(a)$ and~\eqref{res:lem_monotone_stoc1}$(b)$, $\sqrt{\sigma_{V_0}(s,a)}-\sqrt{\sigma_{V^*}(s,a)}\leq\sqrt{\sigma_{V_0-V^*}(s,a)}\leq\|V_0-V^*\|_\infty\leq u_0$ if epoch $k=0$, and $\sqrt{\sigma_{V^{\pi_0}}(s,a)}-\sqrt{\sigma_{V^*}(s,a)}\leq\sqrt{\sigma_{V^{\pi_0}-V^*}(s,a)}\leq\|V^{\pi_0}-V^*\|_\infty\leq u_k$ if $k>0$, and $(c)$ utilizes~\eqref{law_of_total_var} and Lemma~\ref{lem:total_variance}. It follows from~\eqref{eq7:prop_stoc1} that $A_3/T\leq u_k/10$, which, together with~\eqref{eq3:prop_stoc1} and~\eqref{eq4:prop_stoc1}, yields $\|V^{\pi_T}-V^*\|_\infty\leq (687/1400)u_k\leq u_k/2$. This completes the first part of this proof.

    Next, we show~\eqref{res:prop_stoc1}$(b)$. For any $t'\in\{1,2,...,T_k\}$, let $\eta=\eta_t$, then Lemma~\ref{lem:converge_stoc1} implies that
    \begin{align}
        (V_{t'}-V^*)+\tsum_{t=1}^{t'-1}(I-\gamma\P_{\pi^*})(V_t-V^*)+\tfrac{1}{\eta}D_{\Tilde{\pi}_{t'}}^{\pi^*}\leq\tfrac{1}{\eta}D_{\Tilde{\pi}_0}^{\pi^*}+\gamma\P_{\pi^*}(V_0-V^*)+\tsum_{t=0}^{t'-1}\big\langle\Tilde{Q}^t-Q^t,\pi^*\big\rangle,
        \label{eq8:prop_stoc1}
    \end{align}
    which yields
    \begin{align}
        &\eta(I-\gamma\P_{\pi^*})^{-1}\big(V^{\pi_{t'}}-V^*\big)+\eta(t'-1)\big(V^{\pi_{t'}}-V^*\big)+(I-\gamma\P_{\pi^*})^{-1}D_{\Tilde{\pi}_{t'}}^{\pi^*} \nn\\
        &\leq(I-\gamma\P_{\pi^*})^{-1}D_{\Tilde{\pi}_0}^{\pi^*}+\eta(I-\gamma\P_{\pi^*})^{-1}\Big[\gamma\P_{\pi^*}(V_0-V^*)+\tsum_{t=0}^{t'-1}\big\langle\Tilde{Q}^t-Q^t,\pi^*\big\rangle\Big]\nn\\
        & \quad+\eta\big[\tfrac{1}{1-\gamma}+(t'-1)\big]\cdot\tfrac{u_k}{10}\cdot\mathbf{1}_{\bS}
        \label{eq9:prop_stoc1}
    \end{align}
    by $V_0\geq V_1\geq\cdots\geq V_T$ and $V_T\geq V^{\pi_T}-(u_k/10)\cdot\mathbf{1}_{\bS}$. Since $V^{\pi_{t'}}\geq V^*$ and $\eta=D/(7u_k)$, then we simplify~\eqref{eq9:prop_stoc1} to
    \begin{align}
        \big\|(I-\gamma\P_{\pi^*})^{-1}D_{\Tilde{\pi}_{t'}}^{\pi^*}\big\|_\infty\leq\tfrac{D}{1-\gamma}+\tfrac{D}{7u_k}\cdot\tfrac{1}{1-\gamma}\cdot\tfrac{26u_k}{25}+\tfrac{D}{7u_k}\cdot\tfrac{t'u_k}{10}+\tfrac{D}{7u_k}\cdot\big(\tfrac{1}{1-\gamma}+t'-1\big)\cdot\tfrac{u_k}{10}\leq\tfrac{2D}{1-\gamma},
        \label{eq10:prop_stoc1}
    \end{align}
    where the first inequality uses $\|(I-\gamma\P_{\pi^*})^{-1}D_{\Tilde{\pi}_0}^{\pi^*}\|_\infty\leq D/(1-\gamma)$, $\|V_0-V^*\|_\infty\leq(26/25)u_k$ and~\eqref{eq7:prop_stoc1}. This completes the proof.
\end{proof}
\vgap

Proposition~\ref{prop:stoc1} shows that one epoch of Algorithm~\ref{algo:VMD2} reduces the value error from $u_k$ to $u_{k+1}=u_k/2$ with high probability, while maintaining control of the Bregman-divergence term. This result is the stochastic analogue of Proposition~\ref{prop:deter} and implies that Algorithm~\ref{algo:VMD2} computes an $\epsilon$-optimal policy $\hat{\pi}_K$ in $K:=\lceil\log_2(u_0/\epsilon)\rceil$ epochs with high probability. However, the bound on $\|(I-\gamma\P_{\pi^*})^{-1}D_{\hat{\pi}'_k}^{\pi^*}\|_\infty$ grows exponentially across epochs, which yields an upper bound on $\|D_{\hat{\pi}_K}^{\pi^*}\|_\infty$ that depends polynomially on $\epsilon^{-1}$, thus possible unboundedness as $\epsilon$ tends zero. To remove this dependence on $\epsilon$, we apply the analysis in Proposition~\ref{prop:stoc1} only during the first $k_0:=\lceil\log_2 u_0\rceil$ epochs, i.e., for $k=0,1,...,k_0-1$. Then we have $u_{k_0}\leq 1$ with high probability. Starting from epoch $k_0$, we utilize the convergence analysis developed in the following result. Since the analysis below involves multiple epochs, we use the subscript $(k,t)$ instead of $t$ for the relevant quantities.

\begin{proposition}
    For any $\delta\in(0,1)$ and some $D>0$, suppose that $\|V^{\hat{\pi}_{k_0}}-V^*\|_\infty\leq u_{k_0}\leq 1$ and $\|(I-\gamma\P_{\pi^*})^{-1}D_{\hat{\pi}'_{k_0}}^{\pi^*}\|_\infty\leq D/(1-\gamma)$ hold, and the algorithmic parameters in Algorithm~\ref{algo:VMD2} are set to
    \begin{align}
        &T_k=\Big\lceil\tfrac{28}{(1-\gamma)u_k}\Big\rceil,\quad \eta_{k,t}=\tfrac{D}{7} \quad\text{for}\quad t=0,1,...,T_k-1,k=k_0,k_0+1,...,K-1, \nn\\
        &m_{k,1}\geq\tfrac{[250\sqrt{2}(1+\Bar{h})]^2\ln(12K|\bS||\bA|\delta^{-1})\lceil\log_2(\epsilon^{-1})\rceil^2}{(1-\gamma)^{3}\min\{u_k,1\}^{2}}\quad\text{and}\quad m_{k,2}\geq\tfrac{(100\sqrt{2})^2\ln[4K(T_k-1)|\bS||\bA|\delta^{-1}]\lceil\log_2(\epsilon^{-1})\rceil^2}{(1-\gamma)^{2}} \nn\\
        &\text{for}\quad k=k_0,k_0+1,...,K-1,
        \label{param:prop2_stoc1}
    \end{align}
    then with probability at least $1-[(K-k_0)/K]\delta$, we have
    \begin{align}
        (a) \ \big\|V^{\hat{\pi}_K}-V^*\big\|_\infty\leq u_K; \quad (b) \ \big\|(I-\gamma\P_{\pi^*})^{-1}D_{\Tilde{\pi}_{k,t}}^{\pi^*}\big\|_\infty\leq\tfrac{2D}{1-\gamma} \quad\forall t=0,1,...,T_k
        \label{res:prop2_stoc1}
    \end{align}
    for all $k=k_0,k_0+1,...,K-1$.
    \label{prop2:stoc1}
\end{proposition}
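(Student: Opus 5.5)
We argue by induction over the epochs $k=k_0,\dots,K-1$. The induction hypothesis for epoch $k$ is
\[
\|V^{\hat{\pi}_k}-V^*\|_\infty\leq u_k \quad\text{and}\quad \|(I-\gamma\P_{\pi^*})^{-1}D_{\hat{\pi}'_k}^{\pi^*}\|_\infty\leq \tfrac{2D}{1-\gamma}.
\]
We will in fact need a slightly stronger form of the second bound: the term $\tfrac{1}{1-\gamma}$ times $D$, plus an additive increment per epoch that is summable. The key difference from Proposition~\ref{prop:stoc1} is the stepsize. Here $\eta=D/7$ is fixed rather than $D/(7u_k)$, and the epoch length is $T_k=\lceil 28/((1-\gamma)u_k)\rceil$. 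These are exactly the choices of Proposition~\ref{prop:stoc1} after rescaling the stepsize by $u_k\le 1$, with the factor $1/u_k$ moved into $T_k$.

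For a single epoch $k$, we first invoke Lemma~\ref{lem:monotone_stoc1} with $c=50\lceil\log_2(\epsilon^{-1})\rceil$ and failure probability $\delta/K$. This explains the extra factors $\lceil\log_2\epsilon^{-1}\rceil^2$ and $K$ in $m_{k,1},m_{k,2}$. It gives the error bounds (a) and (b), together with the approximate monotonicity
\[
V_t\geq V^{\pi_t}-\tfrac{5u_k}{2c}\cdot\mathbf{1}_{\bS}, \qquad t=1,\dots,T_k.
\]
We then repeat the argument of~\eqref{eq1:prop_stoc1}--\eqref{eq7:prop_stoc1} with $\eta=D/7$:
\begin{itemize}
\item $A_1/T_k$ becomes $\tfrac{7}{D}\cdot\tfrac{2D}{(1-\gamma)T_k}\le u_k/2$ (up to constants to be tuned).
\item $A_2/T_k\le u_k\cdot u_k/28\le u_k/28$.
\item $A_3/T_k\le u_k/c'$, from~\eqref{eq7:prop_stoc1} with the larger $c$.
\item $A_4$ is of order $u_k/c$.
\end{itemize}
Together these yield $\|V^{\pi_{T_k}}-V^*\|_\infty\le u_{k+1}$. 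The constant in $T_k$ may need a factor-of-two adjustment, because the incoming divergence bound is $2D$ rather than $D$.

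The main obstacle is part (b): showing that the divergence bound does not double every epoch, as it would under a naive reuse of Proposition~\ref{prop:stoc1}. I would follow~\eqref{eq8:prop_stoc1}--\eqref{eq10:prop_stoc1} with $\eta=D/7$. Multiplying through by $(I-\gamma\P_{\pi^*})^{-1}$ and dropping the nonnegative value-gap terms (using $V^{\pi_{t'}}\ge V^*$) gives
\[
(I-\gamma\P_{\pi^*})^{-1}D_{\Tilde{\pi}_{k,t'}}^{\pi^*}\le(I-\gamma\P_{\pi^*})^{-1}D_{\Tilde{\pi}_{k,0}}^{\pi^*}+\tfrac{D}{7}\Big[\tfrac{\|V_0-V^*\|_\infty}{1-\gamma}+t'\cdot\tfrac{u_k}{c'}+\big(\tfrac{1}{1-\gamma}+t'\big)\tfrac{5u_k}{2c}\Big].
\]
Since $t'\le T_k=\cO(1/((1-\gamma)u_k))$, the terms proportional to $t'u_k/c$ contribute only $\cO(D/((1-\gamma)c))$. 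The term $\|V_0-V^*\|_\infty/(1-\gamma)\le 2u_k/(1-\gamma)$ contributes $\cO(Du_k/(1-\gamma))$, and $u_k$ decays geometrically in $k$.

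The per-epoch increment is therefore $\tfrac{D}{1-\gamma}\big(\cO(u_k)+\cO(1/c)\big)$. Summing over the at most $\lceil\log_2\epsilon^{-1}\rceil$ epochs after $k_0$:
\begin{itemize}
\item the geometric sum $\sum_k u_k\le 2u_{k_0}\le 2$ is handled by the constant $7$ in $\eta$;
\item the $1/c$ terms sum to $\cO(\lceil\log_2\epsilon^{-1}\rceil/c)$, which is small precisely because $c$ scales with $\lceil\log_2\epsilon^{-1}\rceil$.
\end{itemize}
So the cumulative increment stays below $D/(1-\gamma)$, which preserves the bound $2D/(1-\gamma)$ at every iterate.

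This is why the induction should carry the accumulated bound
\[
\tfrac{D}{1-\gamma}\Big(1+\tsum_{j<k}\text{increment}_j\Big)
\]
rather than $2D/(1-\gamma)$ directly. Checking the constants so that the increments sum to at most $D/(1-\gamma)$ is the delicate step. Finally, a union bound over the $K-k_0$ epochs, each failing with probability at most $\delta/K$, gives the stated probability $1-[(K-k_0)/K]\delta$, and (a) follows at $k=K$.
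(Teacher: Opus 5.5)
Your outer frame (induction over epochs, Lemma~\ref{lem:monotone_stoc1} at failure level $\delta/K$, union bound) is right. The epoch-by-epoch bookkeeping, however, does not prove the statement with the parameters it fixes. For (a), you treat epoch $k$ in isolation with an incoming divergence of up to $2D/(1-\gamma)$. Then $A_1/T_k$ alone equals $u_k/2$, and the other positive terms push you past $u_{k+1}$. You concede that $T_k$ would have to be doubled, but $T_k=\lceil 28/((1-\gamma)u_k)\rceil$ is part of the hypothesis.

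Carrying the accumulated bound instead does not rescue this. The stated $m_{k,1},m_{k,2}$ are exactly what Lemma~\ref{lem:monotone_stoc1} needs for $c=25\lceil\log_2(\epsilon^{-1})\rceil$, not $50\lceil\log_2(\epsilon^{-1})\rceil$, which would require four times the samples. So each step costs $u_k/(10\lceil\log_2(\epsilon^{-1})\rceil)$, both in the error term and in the approximate monotonicity. With epochs of length $T_k\approx 28/((1-\gamma)u_k)$ and up to $\lceil\log_2(\epsilon^{-1})\rceil$ epochs, this alone adds about $\tfrac{D}{7}\cdot\tfrac{5.6}{1-\gamma}=\tfrac{0.8D}{1-\gamma}$ to the divergence bound. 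You also re-pay the warm-start term $\tfrac{D}{7}\|V_{k,0}-V^*\|_\infty/(1-\gamma)$ in every epoch, roughly $\tfrac{2D}{7(1-\gamma)}$ in total when $u_{k_0}=1$. With these per-step bounds the increments sum to about $1.1D/(1-\gamma)$. So (b) loses the constant $2$, and since the same quantity feeds $A_1$ in the last epochs, (a) overshoots $u_k/2$ as well.

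The missing idea is to exploit two facts: $\eta_{k,t}\equiv D/7$ is the same in every epoch, and $\Tilde{\pi}_{k+1,0}=\Tilde{\pi}_{k,T_k}$. The paper therefore sums Lemma~\ref{lem:converge_stoc1} over all epochs $k=k_0,\dots,k'$ instead of closing each one separately. The Bregman terms then telescope, leaving only the initial term $\tfrac{7}{D}D^{\pi^*}_{\Tilde{\pi}_{k_0,0}}$ on the right. That term contributes $\tfrac{7}{(1-\gamma)T_{k'}}\le u_{k'}/4$ to (a), no matter how large the intermediate divergences became.

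Each warm start $\gamma\P_{\pi^*}(V_{k+1,0}-V^*)$ is absorbed by the previous epoch's terminal term $V_{k,T_k}-V^*$ on the left-hand side. What remains is the one-sided mismatch $V_{k+1,0}-V_{k,T_k}\le\tfrac{6u_{k+1}}{25\lceil\log_2(\epsilon^{-1})\rceil}\cdot\mathbf{1}_{\bS}$. This follows from $\|V_{k+1,0}-V^{\hat{\pi}_{k+1}}\|_\infty\le u_{k+1}/c$ together with $V_{k,T_k}\ge V^{\hat{\pi}_{k+1}}-\tfrac{u_k}{10\lceil\log_2(\epsilon^{-1})\rceil}\cdot\mathbf{1}_{\bS}$. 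Only the first warm start, $\|V_{k_0,0}-V^*\|_\infty\le 26u_{k_0}/25$, survives at full size.

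Dividing the summed inequality by $T_{k'}$ gives (a). Keeping the divergence term, with $t'\le T_{k'}$, gives (b). The paper's accounting lands just under $u_{k'}/2$ and just under $2D/(1-\gamma)$ respectively. There is no room for the extra per-epoch warm-start payments in your scheme.
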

\begin{proof}
    We use mathematical induction to show that for any epoch $k'\in\{k_0,k_0+1,...,K\}$, we have
    \begin{align}
        &(a) \ \text{For the epochs } k=k_0,k_0+1,...,k'-1,~\eqref{res:lem_monotone_stoc1}\text{ with $c=25\lceil\log_2(\epsilon^{-1})\rceil$ holds}; \nn\\
        &(b) \ \big\|V^{\hat{\pi}_k}-V^*\big\|_\infty\leq u_k \text{ holds for } k=k_0,k_0+1,...,k'; \nn\\
        &(c) \ \big\|(I-\gamma\P_{\pi^*})^{-1}D_{\Tilde{\pi}_{k,t}}^{\pi^*}\big\|_\infty\leq\tfrac{2D}{1-\gamma} \text{ holds for } t=1,2,...,T_k,k=k_0,k_0+1,...,k'-1
        \label{eq1:prop2_stoc1}
    \end{align}
    with probability at least $1-[(k'-k_0)/K]\delta$. Observe that~\eqref{eq1:prop2_stoc1} holds trivially for $k'=k_0$. Now, suppose that~\eqref{eq1:prop2_stoc1} holds for any $k'\in\{k_0,k_0+1,...,K-1\}$, then we show that
    \begin{align}
        &(a) \ \text{For the epoch $k=k'$},~\eqref{res:lem_monotone_stoc1}\text{ with $c=25\lceil\log_2(\epsilon^{-1})\rceil$ holds}; \quad (b) \ \big\|V^{\hat{\pi}_{k'+1}}-V^*\big\|_\infty\leq u_{k'+1}; \nn\\
        &(c) \ \big\|(I-\gamma\P_{\pi^*})^{-1}D_{\Tilde{\pi}_{k',t}}^{\pi^*}\big\|_\infty\leq\tfrac{2D}{1-\gamma} \quad \forall t=1,2,...,T_{k'}
        \label{eq2:prop2_stoc1}
    \end{align}
    holds with probability at least $1-\delta/K$. To this end, in view of the parameter setting in~\eqref{param:prop2_stoc1}, we first know that~\eqref{res:lem_monotone_stoc1} with $c=25$ holds for epoch $k=k'$ with probability at least $1-\delta/K$.
    
    Next, we show~\eqref{eq2:prop2_stoc1}$(b)$. For any $k\in\{k_0,k_0+1,...,k'\}$, by Lemma~\ref{lem:converge_stoc1} and $\eta_{k,t}=7D$ for $t=0,1,...,T_k-1$, we have
    \begin{align}
        &(V_{k,T_k}-V^*)+\tsum_{t=1}^{T_k-1}(I-\gamma\P_{\pi^*})(V_{k,t}-V^*)+\tfrac{7}{D}D_{\Tilde{\pi}_{k,T_k}}^{\pi^*} \nn\\
        &\leq\tfrac{7}{D}D_{\Tilde{\pi}_{k,0}}^{\pi^*}+\gamma\P_{\pi^*}(V_{k,0}-V^*)+\tsum_{t=0}^{T_k-1}\big\langle\Tilde{Q}^{k,t}-Q^{k,t},\pi^*\big\rangle.
        \label{eq3:prop2_stoc1}
    \end{align}
    Summing up~\eqref{eq3:prop2_stoc1} for $k=k_0,k_0+1,...,k'$, and utilizing $\|V_{k+1,0}-V_{k,T_k}\|_\infty\leq\|V_{k+1,0}-V^{\hat{\pi}_{k+1}}\|_\infty+\|V^{\hat{\pi}_{k+1}}-V_{k,T_k}\|_\infty\leq u_{k+1}/(25\lceil\log_2(\epsilon^{-1})\rceil)+u_k/(10\lceil\log_2(\epsilon^{-1})\rceil)=6u_{k+1}/(25\lceil\log_2(\epsilon^{-1})\rceil)$ by~\eqref{eq2:prop2_stoc1}$(a)$ and $\Tilde{\pi}_{k+1,0}=\Tilde{\pi}_{k,T_k}$ for $k=k_0,k_0+1,...,k'-1$, we obtain
    \begin{align}
        &(V_{k',T_{k'}}-V^*)+\tsum_{k=k_0}^{k'-1}\tsum_{t=1}^{T_k}(I-\gamma\P_{\pi^*})(V_{k,t}-V^*)+\tsum_{t=1}^{T_{k'}-1}(I-\gamma\P_{\pi^*})(V_{k'-1,t}-V^*)+\tfrac{7}{D}D_{\Tilde{\pi}_{k',T_{k'}}}^{\pi^*} \nn\\
        &\leq\tfrac{7}{D}D_{\Tilde{\pi}_{k_0,0}}^{\pi^*}+\gamma\P_{\pi^*}(V_{k_0,0}-V^*)+\tsum_{k=k_0}^{k'}\tsum_{t=0}^{T_k-1}\big\langle\Tilde{Q}^{k,t}-Q^{k,t},\pi^*\big\rangle+\tsum_{k=k_0}^{k'-1}\tfrac{6u_{k+1}}{25\lceil\log_2(\epsilon^{-1})\rceil}\cdot\mathbf{1}_{\bS}.
        \label{eq4:prop2_stoc1}
    \end{align}
    Utilizing $\hat{\pi}_{k'+1}=\pi_{k',T_{k'}}$, $V_{k,0}\geq V_{k,1}\geq\cdots\geq V_{k,T_k}$ for $k=k_0,k_0+1,...,k'$, $\|V_{k,t}-V^{\pi_{k,t}}\|_\infty\leq u_k/(10\lceil\log_2(\epsilon^{-1})\rceil)$ for $t=0,1,...,T_k,k=k_0,k_0+1,...,k'$ by the inductive hypothesis~\eqref{eq1:prop2_stoc1}$(a)$ and~\eqref{eq2:prop2_stoc1}$(a)$, $\|(I-\gamma\P_{\pi^*})^{-1}D_{\hat{\pi}'_{k_0}}^{\pi^*}\|_\infty\leq D/(1-\gamma)$ where $\hat{\pi}'_{k_0}=\Tilde{\pi}_{k_0,0}$, $\|V_{k_0,0}-V^*\|_\infty\leq\|V^{\hat{\pi}_{k_0}}-V^*\|_\infty+\|V_{k_0,0}-V^{\hat{\pi}_{k_0}}\|_\infty\leq (26/25)u_{k_0}\leq 26/25$ that follows from the inductive hypothesis~\eqref{eq1:prop2_stoc1}$(a)$, and $\|(I-\gamma\P_{\pi^*})^{-1}\langle\Tilde{Q}^{k,t}-Q^{k,t},\pi^*\rangle\|_\infty\leq u_k/(10\lceil\log_2(\epsilon^{-1})\rceil)$ that holds for $t=0,1,...,T_k-1,k=k_0,k_0+1,...,k'$ by the similar computation as in~\eqref{eq7:prop_stoc1}, we multiply $(I-\gamma\P_{\pi^*})^{-1}$ on both sides of~\eqref{eq4:prop2_stoc1} and have
    \begin{align}
        T_{k'}\big\|V^{\hat{\pi}_{k'+1}}-V^*\big\|_\infty\leq &\tfrac{7}{1-\gamma}+\tfrac{26}{25(1-\gamma)}+\tsum_{k=k_0}^{k'}T_k\cdot\tfrac{u_k}{10\lceil\log_2(\epsilon^{-1})\rceil}+\tsum_{k=k_0}^{k'-1}\tfrac{6u_{k+1}}{25(1-\gamma)\lceil\log_2(\epsilon^{-1})\rceil} \nn\\
        &+\tsum_{k=k_0}^{k'-1} T_k\cdot\tfrac{u_k}{10\lceil\log_2(\epsilon^{-1})\rceil}+(T_{k'}-1)\cdot\tfrac{u_{k'}}{10\lceil\log_2(\epsilon^{-1})\rceil}+\tfrac{u_{k'}}{10(1-\gamma)\lceil\log_2(\epsilon^{-1})\rceil}.
        \label{eq5:prop2_stoc1}
    \end{align}
    Applying the choice of $T_k\geq 28/[(1-\gamma)u_k]$ for $k=k_0,k_0+1,...,K-1$ by~\eqref{param:prop2_stoc1} and $k'-k_0+1\leq K-k_0=\lceil\log_2(u_0/\epsilon)\rceil-\lceil\log_2(u_0)\rceil\leq\lceil\log_2(\epsilon^{-1})\rceil$ to~\eqref{eq5:prop2_stoc1}, we obtain
    \begin{align}
        \big\|V^{\hat{\pi}_{k'+1}}-V^*\big\|_\infty\leq\tfrac{(1-\gamma)u_{k'}}{28}\cdot\Big(\tfrac{201}{25(1-\gamma)}+\tfrac{14}{5(1-\gamma)}+\tfrac{3}{25(1-\gamma)}+\tfrac{14}{5(1-\gamma)}+\tfrac{1}{10(1-\gamma)}\Big)\leq\tfrac{u_{k'}}{2}=u_{k'+1}.
        \label{eq6:prop2_stoc1}
    \end{align}

    Finally, let us show~\eqref{eq2:prop2_stoc1}$(c)$. For any $t'\in\{1,2,...,T_{k'}\}$, Lemma~\ref{lem:converge_stoc1} yields
    \begin{align}
        &(V_{k',t'}-V^*)+\tsum_{t=1}^{t'-1}(I-\gamma\P_{\pi^*})(V_{k',t}-V^*)+\tfrac{7}{D}D_{\Tilde{\pi}_{k',t'}}^{\pi^*}\nn\\
        &\leq\tfrac{7}{D}D_{\Tilde{\pi}_{k',0}}^{\pi^*}+\gamma\P_{\pi^*}(V_{k',t'}-V^*)+\tsum_{t=0}^{t'-1}\big\langle\Tilde{Q}^{k',t}-Q^{k',t},\pi^*\big\rangle.
        \label{eq7:prop2_stoc1}
    \end{align}
    Summing up~\eqref{eq3:prop2_stoc1} for $k=k_0,k_0+1,...,k'-2$ as well as~\eqref{eq7:prop2_stoc1}, we obtain
    \begin{align}
        &(V_{k',t'}-V^*)+\tsum_{k=k_0}^{k'-1}\tsum_{t=1}^{T_k}(I-\gamma\P_{\pi^*})(V_{k,t}-V^*)+\tsum_{t=1}^{t'-1}(I-\gamma\P_{\pi^*})(V_{k',t}-V^*)+\tfrac{6}{D}D_{\Tilde{\pi}_{k',t'}}^{\pi^*}\nn\\
        &\leq\tfrac{6}{D}D_{\Tilde{\pi}_{k_0,0}}^{\pi^*} 
        +\gamma\P_{\pi^*}(V_{k_0,0}-V^*)+\tsum_{k=k_0}^{k'-1}\tsum_{t=0}^{T_k-1}\big\langle\Tilde{Q}^{k,t}-Q^{k,t},\pi^*\big\rangle+\tsum_{t=0}^{t'-1}\big\langle\Tilde{Q}^{k',t}-Q^{k',t},\pi^*\big\rangle \nn\\
        &\quad +\tsum_{k=k_0}^{k'-1}\tfrac{6u_{k+1}}{25\lceil\log_2(\epsilon^{-1})\rceil}\cdot\mathbf{1}_{\bS},
        \label{eq8:prop2_stoc1}
    \end{align}
    which, following the procedure from~\eqref{eq4:prop2_stoc1} to~\eqref{eq6:prop2_stoc1} and using $t'\leq T_{k'}$, implies~\eqref{eq2:prop2_stoc1}$(c)$ and concludes the induction. This completes the proof.
\end{proof}
\vgap

We are now ready to prove Theorem~\ref{thm:stoc1}. The proof proceeds by recursively applying Proposition~\ref{prop:stoc1} over the epochs $k=0,1,...,k_0-1$, and then invoking Proposition~\ref{prop2:stoc1} for the remaining epochs.

\paragraph{Proof of Theorem~\ref{thm:stoc1}}
    This proof follows directly from Proposition~\ref{prop:stoc1} and Proposition~\ref{prop2:stoc1}. We first apply Proposition~\ref{prop:stoc1} to the epochs $k=0,1,...,k_0-1$ of Algorithm~\ref{algo:VMD2}. In each epoch $k\in\{0,1,...,k_0-1\}$, in view of the parameter setting~\eqref{param:thm_stoc1},~\eqref{res:prop_stoc1} occurs with probability at least $1-\delta/K$. As a result, with probability at least $1-(k_0/K)\delta$, we will have $\|V^{\hat{\pi}_0}-V^*\|_\infty\leq u_{k_0}$ and $\|(I-\gamma\P_{\pi^*})^{-1}D_{\Tilde{\pi}_{k,t}}^{\pi^*}\|_\infty\leq 2^{k_0}D_0/(1-\gamma)$ for all $k=0,1,...,k_0-1,t=0,1,...,T_k$. Next, we apply Proposition~\ref{prop2:stoc1} to the epochs $k=k_0,k_0+1,...,K-1$ of Algorithm~\ref{algo:VMD2}, and obtain
    \begin{align}
        \big\|V^{\hat{\pi}_K}-V^*\big\|_\infty\leq u_K=\tfrac{u_0}{2^K}\leq\epsilon
        \label{eq1:thm_stoc1}
    \end{align}
    and
    \begin{align}
        \big\|D_{\hat{\pi}_K}^{\pi^*}\big\|_\infty\leq\max_{k=0,1,...,K-1,t=0,1,...,T_k}\big\|(I-\gamma\P_{\pi^*})^{-1}D_{\Tilde{\pi}_{k,t}}^{\pi^*}\big\|_\infty\leq 2\cdot 2^{k_0}\cdot\tfrac{D_0}{1-\gamma}\leq\tfrac{4(1+\Bar{h})D_0}{(1-\gamma)^2}
        \label{eq2:thm_stoc1}
    \end{align}
    with probability at least $1-(k_0/K)\delta-[(K-k_0)/K]\delta=1-\delta$. This completes the proof.
\vgap
\section{Stochastic Value Mirror Descent under Strongly Convex Regularizers}
\label{section:stoc2}

In this section, we establish the convergence of stochastic value mirror descent when the regularizer $h$ is $\mu$-strongly convex. This additional curvature improves the $\epsilon$-dependence of the sample complexity of our method from $\cO(\epsilon^{-2})$ to $\cO(\epsilon^{-1})$. Throughout this section, we assume that the Bregman divergence is associated with the $l_1$-norm.

\subsection{Algorithm and  Convergence Result}

Algorithm~\ref{algo:VMD3} is a simplified version of Algorithm~\ref{algo:VMD2}, obtained by removing the step~\eqref{eq3:algoVMD2}, which determines whether in each iteration to accept the policy update based on the comparison in~\eqref{eq2:algoVMD2}. We will show that every policy computed by~\eqref{eq1:algoVMD3} is sufficiently well-behaved to be accepted under the strongly convex regularized setting.

For value-based methods, obtaining an $\epsilon$-optimal policy with high probability using only $\cO(\epsilon^{-1})$ samples is challenging, since standard sample-average arguments typically require $\cO(\epsilon^{-2})$ samples to control the estimation error within $\epsilon$. To overcome this difficulty, for any epoch $k\in\{0,1,...,K-1\}$, we introduce a sequence of auxiliary value vectors, denoted by $\{\Bar{V}_t\}_{t=0}^T$, which serve as proxies of the computed value updates $\{V_t\}_{t=0}^T$. Specifically, we define $\Bar{V}_0:=V_0$ if epoch $k=0$, $\Bar{V}_0:=V^{\pi_0}$ if $k>0$, and
\begin{align}
     \Bar{V}_{t+1}(s):=\min\left\{\left\langle c(s,\cdot)+\gamma\P_s\Bar{V}_t,\pi_{t+1}(\cdot|s)\right\rangle+h(\pi_{t+1}(\cdot|s)),\Bar{V}_t(s)\right\} \ \forall s\in\bS \quad \text{for } t=0,1,...,T-1.
    \label{def:barvt}
\end{align}
Moreover, for every $t\in\{0,1,...,T-1\}$, we define $\Tilde{\P}^t\Bar{V}_t$ by
\begin{align}
    \big(\Tilde{\P}^t\Bar{V}_t\big)_{s,a}:=\big(\Tilde{\P}^0V_0\big)_{s,a}+\big(\hat{\P}^{(m_{k,2})}(\Bar{V}_t-V_0)\big)_{s,a}
    \label{def:tildePt_barvt}
\end{align}
for all $(s,a)\in\bS\times\bA$, where $\hat{\P}^{(m_{k,2})}$ uses the same $m_{k,2}$ observations of state transitions for computing $\Tilde{\P}^t(V_t-V_0)$. We also define $\Bar{Q}^t\in\R^{|\bS|\times|\bA|}$ by $\Bar{Q}^t(s,\cdot):=c(s,\cdot)+\gamma\P_s\Bar{V}_t\in\R^{|\bA|}$ for all $s\in\bS$.

The proxy iterates $\{\Bar{V}_t\}_{t=0}^T$ inspire us to establish the approximate monotonicity property $\Bar{V}_t\geq\Gamma_{\pi_t}\Bar{V}_t-\beta$, rather than $V_t\geq\Gamma_{\pi_t}V_t-\beta$. Later we will bound the noise $\beta$ in a different manner, by controlling $\|\Tilde{Q}^t-\Bar{Q}^t\|_{\max}$ and $\|V_t-\Bar{V}_t\|_\infty$ throughout the epoch, and utilizing the strong convexity of $h$ together with the Bregman divergence.

\begin{algorithm}[H]
\caption{Stochastic Value Mirror Descent (SVMD) for DMDPs with Strongly Convex Regularizer}
\label{algo:VMD3}
\begin{algorithmic}
\State{\textbf{Input:} $\hat{\pi}_0(\cdot|s)=\mathbf{1}_{\bA}/|\bA| \ \forall s\in\bS$, $\hat{V}_0=(1+\Bar{h})/(1-\gamma)\cdot\mathbf{1}_{\bS}\in\R^{|\bS|}$, $K>0$, $T>0$, $\{\eta_{k,t}\}_{t=0}^{T-1}$ for $k=0,1,...,K-1$, $\{m_{k,1},m_{k,2}\}_{k=0}^{K-1}>0$.}
\For{$k=0,1,...,K-1$}
\State{Set $\pi_0=\hat{\pi}_k$, $\{\eta_t\}_{t=0}^{T-1}=\{\eta_{k,t}\}_{t=0}^{T-1}$, and compute $\Tilde{\P}^0=\hat{\P}^{(m_{k,1})}$ for each $(s,a)\in\bS\times\bA$.}
\State{If $k=0$, then set $V_0=\hat{V}_k$; otherwise, compute $V_0$ that satisfies $V_0=\langle c+\gamma\Tilde{\P}^0 V_0,\pi_0\rangle+h(\pi_0)$.}
\For{$t=0,1,...,T-1$}
\State{If $t>0$, then compute $\Tilde{\P}^tV_t\in\R^{|\bS|\times|\bA|}$ by $(\Tilde{\P}^tV_t)_{s,a}=(\Tilde{\P}^0V_0)_{s,a}+\hat{\P}_{s,a}^{(m_{k,2})\tr}(V_t-V_0)$.}
\State{For all $s\in\bS$, update}
\State{\begin{align}
    &\pi_{t+1}(\cdot|s)=\argmin_{\pi(\cdot|s)\in\Delta_{\bA}}\Big\{\eta_t\big[\big\langle c(s,\cdot)+\gamma\Tilde{\P}^t_s V_t,\pi(\cdot|s)\big\rangle+h(\pi(\cdot|s))\big]+D^{\pi}_{\pi_t}(s)\Big\}, \label{eq1:algoVMD3}\\
    &V_{t+1}(s)=\min\Big\{\Tilde{V}_{t+1}(s):=\big\langle c(s,\cdot)+\gamma\Tilde{\P}^t_sV_t,\pi_{t+1}(\cdot|s)\big\rangle+h(\pi_{t+1}(\cdot|s)),V_t(s)\Big\}. \label{eq2:algoVMD3}
\end{align}}
\EndFor
\State{Set $\hat{\pi}_{k+1}=\pi_{T_k}$.}
\EndFor
\State{\textbf{Output:} $\hat{\pi}_K$.}
\end{algorithmic}
\end{algorithm}
\vgap

As in the previous sections, we assume $\|D_{\hat{\pi}_0}^{\pi}\|_\infty\leq D_0$. Moreover, we redefine
\begin{align}
    u_k:=\tfrac{1}{2^k}\max\big\{\tfrac{1+\Bar{h}}{1-\gamma},\mu D_0\big\} \quad\text{for}\quad k=0,1,...,K.
    \label{redef:uk}
\end{align}
The convergence of Algorithm~\ref{algo:VMD3} is established as follows.

\begin{theorem}
    For any $\delta\in(0,1)$, suppose that the algorithmic parameters in Algorithm~\ref{algo:VMD3} are set to
    \begin{align}
        &K=\Big\lceil\log_2\big(\tfrac{u_0}{\epsilon}\big)\Big\rceil,\quad T=\Big\lceil\tfrac{18}{1-\gamma}\Big\rceil,\quad \eta_{k,t}=\tfrac{2}{\mu(t+1)} \quad \text{for} \quad t=0,1,...,T-1, \nn\\
        &m_{k,1}\geq 200\max\Big\{\tfrac{64(1+\Bar{h})^2}{\mu}(1-\gamma)^{-5}u_k^{-1}(\ln T+1),1\Big\}\ln(24K|\bS||\bA|\delta^{-1}) \nn\\
        &\text{and}\quad m_{k,2}\geq \Big[\tfrac{400(1+\Bar{h}+\mu D_0)}{\mu}(\ln T+1)+16\Big](1-\gamma)^{-4}\ln[4K(T-1)|\bS||\bA|\delta^{-1}] \quad \text{for} \quad k=0,1,...,K-1,
        \label{param:thm_stoc2}
    \end{align}
    then with probability at least $1-\delta$, $\hat{\pi}_K$ is an $\epsilon$-optimal policy, and $\|D_{\hat{\pi}_k}^{\pi^*}\|_\infty\leq\epsilon/[\mu(1-\gamma)]$.
    \label{thm:stoc2}
\end{theorem}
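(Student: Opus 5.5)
We will argue by induction over epochs. The claim to carry is that, with probability at least $1-k\delta/K$, after epoch $k$ we have $\|V^{\hat{\pi}_k}-V^*\|_\infty\le u_k$ and $\|D_{\hat{\pi}_k}^{\pi^*}\|_\infty\le u_k/\mu$. For $k=0$ this holds by the choice of $\hat V_0$ and by $u_0\ge\mu D_0$ in \eqref{redef:uk}. Since $u_K\le\epsilon$ and $D^{\pi^*}_{\pi}\le (I-\gamma\P_{\pi^*})^{-1}D^{\pi^*}_{\pi}$, the final bound $\epsilon/[\mu(1-\gamma)]$ follows once we control $\|(I-\gamma\P_{\pi^*})^{-1}D^{\pi^*}_{\hat\pi_K}\|_\infty\le u_K/[\mu(1-\gamma)]$. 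The single-epoch step has three ingredients.

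\textbf{(i) A strongly convex recursion.} Apply Lemma~\ref{lem:threepoint1}, in its stochastic form with $\mu>0$, with $\pi=\pi^*$ and $\eta_t=2/(\mu(t+1))$. Multiplying by $(t+1)$ gives a telescoping weight:
\[
(t+1)(V_{t+1}-V^*)+\tfrac{\mu(t+1)(t+2)}{2}D^{\pi^*}_{\pi_{t+1}}\le \tfrac{\mu t(t+1)}{2}D^{\pi^*}_{\pi_t}+(t+1)\big[\gamma\P_{\pi^*}(V_t-V^*)+\langle\Tilde Q^t-Q^t,\pi^*\rangle\big].
\]
This uses $V_{t+1}\le\Tilde V_{t+1}$ as in Lemma~\ref{lem:converge_stoc1}. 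Summing over $t$ and multiplying by $(I-\gamma\P_{\pi^*})^{-1}$, the weighted-average argument of Proposition~\ref{prop:stoc1} gives
\[
\tfrac{T(T+1)}{2}\Big[\min_t(\text{value gap})\Big]+\tfrac{\mu T(T+1)}{2}(I-\gamma\P_{\pi^*})^{-1}D^{\pi^*}_{\pi_T}\le \tfrac{T}{1-\gamma}u_k+(\text{noise}).
\]
Here the $D_{\pi_0}$ term disappears because $t=0$ carries weight $0$. With $T=\lceil 18/(1-\gamma)\rceil$, this halves both the value gap and $\mu D$, provided the noise term is $O(Tu_k)$ in the weighted sense.

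\textbf{(ii) Approximate monotonicity via the proxies $\Bar V_t$.} This is the main obstacle. Algorithm~\ref{algo:VMD3} has no rejection step \eqref{eq3:algoVMD2}, and the noise must be $O(u_k)$ with $m_{k,1}\sim u_k^{-1}$ rather than $u_k^{-2}$.

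We first copy Lemma~\ref{lem:induc1_stoc1} for $\Bar V_t$ in \eqref{def:barvt}. Because $\Bar V_t$ uses exact $\P$, the monotonicity is clean: $\Bar V_t\ge\Gamma_{\pi_t}\Bar V_t-\bar\beta_t$. The only error $\bar\beta_t$ comes from the states where the min selects $\Bar V_t(s)$ while $\pi_{t+1}(\cdot|s)$ changed anyway. This error is bounded by $\langle \Bar Q^t,\pi_{t+1}-\pi_t\rangle$ plus the regularizer difference.

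We then bound the mismatch using the three-point lemma with $\pi=\pi_t$. Strong convexity gives $\|\pi_{t+1}-\pi_t\|_1=O(\eta_t\|\Tilde Q^t-\Bar Q^t\|_{\max})$, and therefore $\|\bar\beta_t\|_\infty\lesssim \eta_t\|\Tilde Q^t-\Bar Q^t\|^2_{\max}$ plus cross terms. This quadratic dependence on the estimation error is what turns $m\sim\epsilon^{-2}$ into $m\sim \epsilon^{-1}$. Summed with weights it produces the $(\ln T+1)/\mu$ factor.

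The remaining quantity is $\|\Tilde Q^t-\Bar Q^t\|_{\max}\le\gamma(|\Tilde\P^0V_0-\P V_0|+|\hat\P^{(m_{k,2})}(\bar V_t-V_0)-\P(\bar V_t-V_0)|)+\gamma\|V_t-\Bar V_t\|_\infty$. We control the first two pieces with Lemma~\ref{lem:error_pv_stoc1}(a)--(c), using the variance bound Lemma~\ref{lem:total_variance} as in Lemma~\ref{lem:induc2_stoc1}. The gap $\|V_t-\Bar V_t\|_\infty$ is bounded inductively: both are min-recursions driven by the same policies, so it grows by at most the per-step noise, giving $O(t\cdot\text{noise})$. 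The factor $(1-\gamma)^{-4}$, $(1-\gamma)^{-5}$ in $m_{k,1},m_{k,2}$ is meant to absorb this.

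\textbf{(iii) Assembly.} Combining gives $\Bar V_T\ge V^{\pi_T}-O(u_k)$ and $\|V_T-\Bar V_T\|_\infty=O(u_k)$. Feeding these into (i) yields $\|V^{\hat\pi_{k+1}}-V^*\|_\infty\le u_{k+1}$ and $\mu\|D^{\pi^*}_{\hat\pi_{k+1}}\|\le u_{k+1}$. A union bound over the $K$ epochs and the $T-1$ inner iterations, matching the logarithmic factors in \eqref{param:thm_stoc2}, gives total probability $1-\delta$.
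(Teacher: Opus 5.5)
There is a genuine gap in step (i), and it carries over into step (iii). You run the mirror-descent recursion on the computed iterates $V_t$, so the sampling error enters \emph{linearly}, through $\langle\tilde{Q}^t-Q^t,\pi^*\rangle=\gamma\langle\tilde{\P}^tV_t-\P V_t,\pi^*\rangle$. Because this term is paired with the fixed distribution $\pi^*$, neither the strong convexity of $h$ nor any Bregman term can absorb it. It also does not average out over $t$, since the piece $\tilde{\P}^0V_0-\P V_0$ is shared by every iteration of the epoch. With $m_{k,1}\propto u_k^{-1}$, that piece is only of order $b(1-\gamma)^{3/2}u_k^{1/2}$ (compare \eqref{res:lem_induc1_stoc2}), which is the worst-case fluctuation scale at that sample size. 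So even with the Bernstein/total-variance refinement, $(I-\gamma\P_{\pi^*})^{-1}\langle\tilde{Q}^t-Q^t,\pi^*\rangle$ is of order $\sqrt{u_k}$ (up to powers of $b$ and $1-\gamma$), not $O(u_k)$. Your proviso that the noise be ``$O(Tu_k)$ in the weighted sense'' therefore fails exactly in the high-accuracy regime where the theorem is meant to beat $\epsilon^{-2}$; enforcing it would require $m_{k,1}\propto u_k^{-2}$, as in Section~\ref{section:stoc1}. For the same reason, the claim $\|V_T-\bar{V}_T\|_\infty=O(u_k)$ in (iii) is false. For $k>0$, $V_0$ already differs from $\bar{V}_0=V^{\pi_0}$ generically at scale $\sqrt{u_k}$, and the paper only obtains $\|V_t-\bar{V}_t\|_\infty\le b(1-\gamma)^{1/2}u_k^{1/2}$ in \eqref{induc_state:stoc2}(c).

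The missing idea is to run the recursion itself on the proxies, as in Lemma~\ref{lem:converge_stoc2}. In the three-point inequality with $\pi=\pi^*$, use $\bar{V}_{t+1}(s)\le\langle\bar{Q}^t(s,\cdot),\pi_{t+1}(\cdot|s)\rangle+h(\pi_{t+1}(\cdot|s))$, so that the error appears as $\langle\tilde{Q}^t-\bar{Q}^t,\pi^*-\pi_{t+1}\rangle$. Split this through $\pi_t$ and apply Young's inequality against $\tfrac{\eta_t\mu}{2}D^{\pi^*}_{\pi_t}$ and $D^{\pi_{t+1}}_{\pi_t}$; the error becomes $\tfrac{2}{\mu}\|\tilde{Q}^t-\bar{Q}^t\|_{\max}^2\le\tfrac{2b^2}{\mu}(1-\gamma)u_k$. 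From then on, $V_t$ matters only through $\|\tilde{Q}^t-\bar{Q}^t\|_{\max}$, where an $O(\sqrt{u_k})$ bound suffices because it is squared. The conclusion then uses $\bar{V}_T\ge V^{\pi_T}-\tfrac{u_k}{4}\mathbf{1}_{\bS}$ directly and never needs $V_T$.

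Two smaller slips should also be fixed. First, with $\eta_t=2/(\mu(t+1))$, your $(t+1)$-weighted inequality does not follow from the three-point lemma (it corresponds to $\eta_t=2/(\mu t)$), and the $D^{\pi^*}_{\pi_0}$ term does not drop out. With unit weights, the coefficients $\tfrac{\mu(t+2)}{2}$ and $\tfrac{\mu(t+3)}{2}$ telescope exactly: half of the strong-convexity gain pays for Young's inequality and the other half for the growth of $1/\eta_t$. This leaves $\mu D^{\pi^*}_{\pi_0}$, which is controlled by the inductive bound on $(I-\gamma\P_{\pi^*})^{-1}D^{\pi^*}_{\pi_0}$. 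Second, in (ii) it is not true that $\|\pi_{t+1}-\pi_t\|_1=O(\eta_t\|\tilde{Q}^t-\bar{Q}^t\|_{\max})$, since the step is driven by $\tilde{Q}^t$ itself. What holds is $\Gamma_{\pi_{t+1}}\bar{V}_t-\Gamma_{\pi_t}\bar{V}_t\le\tfrac{\eta_t}{4}\|\tilde{Q}^t-\bar{Q}^t\|_{\max}^2\cdot\mathbf{1}_{\bS}$, obtained by Young's inequality against $D^{\pi_{t+1}}_{\pi_t}+D^{\pi_t}_{\pi_{t+1}}\ge\|\pi_{t+1}-\pi_t\|_1^2$.
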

\vgap

Theorem~\ref{thm:stoc2} implies that, in the strongly convex regularized setting, Algorithm~\ref{algo:VMD3} achieves sample complexity
\begin{align}
    \cO\Big(|\bS||\bA|\tsum_{k=0}^{K-1}\big[m_{k,1}+(T-1)m_{k,2}\big]\Big)=\Tilde{\cO}\big(|\bS||\bA|(1-\gamma)^{-5}\epsilon^{-1}\big).
    \label{complexity:stoc2}
\end{align}
To the best of our knowledge, this is the first value-based method to exhibit an $\cO(\epsilon^{-1})$ dependence in this setting. Moreover, the Bregman divergence between the output policy and $\pi^*$ is bounded by $\cO((1-\gamma)^{-1}\epsilon)$, which implies convergence of the generated policy and provides a stronger stability guarantee than in the general convex setting.

\subsection{Convergence Analysis}

The first lemma is the strongly convex analogue of Lemma~\ref{lem:threepoint2}.

\begin{lemma}
    In any epoch $k\in\{0,1,...,K-1\}$ of Algorithm~\ref{algo:VMD3}, for any iteration $t\in\{0,1,...,T-1\}$, state $s\in\bS$ and policy $\pi(\cdot|s)\in\Delta_{\bA}$, we have
    \begin{align*}
        \eta_t\big[\big\langle c(s,\cdot)+\gamma\Tilde{\P}^t_sV_t,\pi_{t+1}(\cdot|s)-\pi(\cdot|s)\big\rangle+h(\pi_{t+1}(\cdot|s))-h(\pi(\cdot|s))\big]+D^{\pi_{t+1}}_{\pi_t}(s)\leq D^{\pi}_{\pi_t}(s)-(1+\eta_t\mu)D^{\pi}_{\pi_{t+1}}(s).
    \end{align*}
    \label{lem:threepoint3}
\end{lemma}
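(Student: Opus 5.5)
This is the standard three-point lemma for a prox step whose objective carries a $\mu$-strongly convex term. I would prove it exactly as Lemma~\ref{lem:threepoint1}, i.e.\ via \cite[Lemma 3.4]{lan2020first}, and the only change is the gradient vector. Fix $s\in\bS$ and write $g:=c(s,\cdot)+\gamma\Tilde{\P}^t_sV_t\in\R^{|\bA|}$. By construction in Algorithm~\ref{algo:VMD3}, $g$ is a fixed (random but already realized) vector at iteration $t$, so~\eqref{eq1:algoVMD3} is a deterministic prox-mapping $\pi_{t+1}(\cdot|s)=\argmin_{p\in\Delta_{\bA}}\{\eta_t[\langle g,p\rangle+h(p)]+D(\pi_t(\cdot|s),p)\}$. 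The randomness of $\Tilde{\P}^t$ is therefore irrelevant to the argument.

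The steps I would carry out are as follows.

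First, write the first-order optimality condition at $p^+:=\pi_{t+1}(\cdot|s)$. There is a subgradient $h'(p^+)\in\partial h(p^+)$ such that for all $p\in\Delta_{\bA}$,
\[
\langle \eta_t g+\eta_t h'(p^+)+\nabla\omega(p^+)-\nabla\omega(\pi_t(\cdot|s)),\,p-p^+\rangle\ge 0 .
\]

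Second, use the three-point identity for Bregman divergences,
\[
\langle\nabla\omega(p^+)-\nabla\omega(\pi_t(\cdot|s)),\,p-p^+\rangle=D^{\pi}_{\pi_t}(s)-D^{\pi_{t+1}}_{\pi_t}(s)-D^{\pi}_{\pi_{t+1}}(s),
\]
where $p=\pi(\cdot|s)$. This turns the optimality condition into
\[
\eta_t\langle g,p^+-p\rangle+\eta_t\langle h'(p^+),p^+-p\rangle+D^{\pi_{t+1}}_{\pi_t}(s)\le D^{\pi}_{\pi_t}(s)-D^{\pi}_{\pi_{t+1}}(s).
\]

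Third, apply~\eqref{hconvex} with base point $\pi'=\pi_{t+1}$. This gives $\langle h'(p^+),p^+-p\rangle\ge h(p^+)-h(p)+\mu D^{\pi}_{\pi_{t+1}}(s)$. Substituting and moving $\eta_t\mu D^{\pi}_{\pi_{t+1}}(s)$ to the right-hand side yields the claimed inequality with the factor $(1+\eta_t\mu)$.

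The only delicate point is the subgradient choice in the first step. The optimality condition must use the same subgradient $h'(p^+)$ for which~\eqref{hconvex} holds with modulus $\mu$. I expect this to be resolved by invoking the convex subdifferential: $h$ is convex, so the Clarke subdifferential coincides with it. The existence of the minimizer and the validity of the optimality condition follow from continuity of $h$ and strict convexity of $\omega$ on the compact simplex.

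If $\omega$ is not differentiable on the boundary, as for KL, I would note that the minimizer lies in the relative interior, so the argument goes through there as in \cite{lan2020first}.
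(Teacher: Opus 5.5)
Your proposal is correct and takes essentially the same approach as the paper, whose proof simply invokes \cite[Lemma 3.4]{lan2020first}; you have unpacked that lemma into the optimality condition of the prox step, the three-point identity, and \eqref{hconvex} applied at the base point $\pi_{t+1}$. Your observation that $c(s,\cdot)+\gamma\Tilde{\P}^t_sV_t$ enters only as an already-realized linear term in the prox-mapping is the only adaptation needed, and your three-point identity and sign bookkeeping check out.
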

\begin{proof}
    This proof directly follows from~\cite[Lemma 3.4]{lan2020first}.
\end{proof}
\vgap

The next result builds on Lemma~\ref{lem:threepoint3} and establishes a single-epoch convergence property of Algorithm~\ref{algo:VMD3} in terms of the proxy iterates $\Bar{V}_t$.

\begin{lemma}
    In any epoch $k\in\{0,1,...,K-1\}$ of Algorithm~\ref{algo:VMD3}, taking $\eta_t=2/[\mu(t+1)]$ for $t=0,1,...,T-1$ yields
    \begin{align}
        &(\Bar{V}_T-V^*)+\tsum_{t=1}^{T-1}(I-\gamma\P_{\pi^*})(\Bar{V}_t-V^*)+\tfrac{\mu(T+2)}{2}D_{\pi_T}^{\pi^*} \nn\\
        &\leq\mu D_{\pi_0}^{\pi^*}+\gamma\P_{\pi^*}(\Bar{V}_0-V^*)+\tfrac{2}{\mu}\tsum_{t=0}^{T-1}\big\|\Tilde{Q}^t-\Bar{Q}^t\big\|_{\max}^2\cdot\mathbf{1}_{\bS}.
        \label{res:lem3p3}
    \end{align}
    \label{lem:converge_stoc2}
\end{lemma}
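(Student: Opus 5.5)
The plan is to mimic the proof of Lemma~\ref{lem:converge_stoc1}, but with the proxy matrices $\Bar{Q}^t$ in place of $Q^t$. The strong-convexity term $(1+\eta_t\mu)D^{\pi}_{\pi_{t+1}}$ from Lemma~\ref{lem:threepoint3} will serve two purposes: it makes the Bregman terms telescope under $\eta_t=2/[\mu(t+1)]$, and it absorbs the cross term created by the noise $\Tilde{Q}^t-\Bar{Q}^t$.

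\textbf{Step 1: three-point inequality at $\pi^*$.} Fix $s$ and $t$, and take $\pi=\pi^*$ in Lemma~\ref{lem:threepoint3}. Note that $c(s,\cdot)+\gamma\Tilde{\P}^t_sV_t=\Tilde{Q}^t(s,\cdot)$. Split the linear term as
\[
\langle\Tilde{Q}^t(s,\cdot),\pi_{t+1}(\cdot|s)-\pi^*(\cdot|s)\rangle=\langle\Bar{Q}^t(s,\cdot),\pi_{t+1}(\cdot|s)-\pi^*(\cdot|s)\rangle+\langle\Tilde{Q}^t(s,\cdot)-\Bar{Q}^t(s,\cdot),\pi_{t+1}(\cdot|s)-\pi^*(\cdot|s)\rangle .
\]
The second piece is at least $-\|\Tilde{Q}^t-\Bar{Q}^t\|_{\max}\,x$, where $x:=\|\pi_{t+1}(\cdot|s)-\pi^*(\cdot|s)\|_1$, by H\"older's inequality with the $\ell_1$/$\ell_\infty$ pairing. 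Drop $D^{\pi_{t+1}}_{\pi_t}(s)\ge0$ and divide by $\eta_t$. This gives
\[
\langle\Bar{Q}^t(s,\cdot),\pi_{t+1}-\pi^*\rangle+h(\pi_{t+1})-h(\pi^*)\le \tfrac{1}{\eta_t}D^{\pi^*}_{\pi_t}(s)-\big(\tfrac1{\eta_t}+\mu\big)D^{\pi^*}_{\pi_{t+1}}(s)+\|\Tilde{Q}^t-\Bar{Q}^t\|_{\max}\,x .
\]

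\textbf{Step 2: telescoping and absorbing the noise.} With $\eta_t=2/[\mu(t+1)]$ we have $1/\eta_t+\mu=\mu(t+3)/2=1/\eta_{t+1}+\mu/2$. Hence the coefficient of $D^{\pi^*}_{\pi_{t+1}}$ can be split into the telescoping part $\mu(t+2)/2$ and a surplus $\mu/2$. The surplus will absorb the noise, and this is the main obstacle. Since the Bregman divergence is associated with the $\ell_1$-norm, $D^{\pi^*}_{\pi_{t+1}}(s)\ge x^2/2$, so $(\mu/2)D^{\pi^*}_{\pi_{t+1}}(s)\ge \mu x^2/4$. Then
\[
\|\Tilde{Q}^t-\Bar{Q}^t\|_{\max}\,x-\tfrac{\mu}{4}x^2\le \tfrac1\mu\|\Tilde{Q}^t-\Bar{Q}^t\|_{\max}^2 ,
\]
which is at most $\tfrac2\mu\|\Tilde{Q}^t-\Bar{Q}^t\|_{\max}^2$. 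Summing over $t=0,\dots,T-1$, the Bregman terms telescope to
\[
\tfrac{\mu}{2}D^{\pi^*}_{\pi_0}-\tfrac{\mu(T+2)}{2}D^{\pi^*}_{\pi_T}\le \mu D^{\pi^*}_{\pi_0}-\tfrac{\mu(T+2)}{2}D^{\pi^*}_{\pi_T}.
\]

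\textbf{Step 3: converting to value gaps.} By the definition \eqref{def:barvt}, which takes a minimum, $\Bar{V}_{t+1}(s)\le\langle\Bar{Q}^t(s,\cdot),\pi_{t+1}(\cdot|s)\rangle+h(\pi_{t+1}(\cdot|s))$. Exactly as in \eqref{eq2:lem_converge_deter}, $\langle\Bar{Q}^t(s,\cdot),\pi^*(\cdot|s)\rangle+h(\pi^*(\cdot|s))=V^*(s)+\gamma(\P_{\pi^*})_s(\Bar{V}_t-V^*)$. Substituting both relations, the per-step inequality becomes
\[
\Bar{V}_{t+1}-V^*-\gamma\P_{\pi^*}(\Bar{V}_t-V^*)\le(\text{Bregman differences})+\tfrac2\mu\|\Tilde{Q}^t-\Bar{Q}^t\|^2_{\max}\mathbf{1}_{\bS}.
\]
Summing over $t$ regroups the left-hand side as $(\Bar{V}_T-V^*)+\sum_{t=1}^{T-1}(I-\gamma\P_{\pi^*})(\Bar{V}_t-V^*)-\gamma\P_{\pi^*}(\Bar{V}_0-V^*)$. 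This yields \eqref{res:lem3p3}. Note that the argument uses no sign information on $\Bar{V}_t-V^*$; only the Bregman bookkeeping in Step 2 requires care.
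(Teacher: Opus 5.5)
Your plan has a genuine gap in the telescoping of Step 2. Once you spend the surplus $\tfrac{\mu}{2}$ of $D^{\pi^*}_{\pi_{t+1}}$ on the noise, step $t$ contributes $\tfrac{1}{\eta_t}D^{\pi^*}_{\pi_t}-\tfrac{1}{\eta_{t+1}}D^{\pi^*}_{\pi_{t+1}}$. Summing over $t=0,\dots,T-1$, with $\eta_T:=2/[\mu(T+1)]$, gives $\tfrac{1}{\eta_0}D^{\pi^*}_{\pi_0}-\tfrac{1}{\eta_T}D^{\pi^*}_{\pi_T}=\tfrac{\mu}{2}D^{\pi^*}_{\pi_0}-\tfrac{\mu(T+1)}{2}D^{\pi^*}_{\pi_T}$, not $-\tfrac{\mu(T+2)}{2}D^{\pi^*}_{\pi_T}$. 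At the last step the full coefficient is $\tfrac{1}{\eta_{T-1}}+\mu=\tfrac{\mu(T+2)}{2}$. You have already taken $\tfrac{\mu}{2}$ of it to absorb $\|\Tilde{Q}^{T-1}-\Bar{Q}^{T-1}\|_{\max}\|\pi_T(\cdot|s)-\pi^*(\cdot|s)\|_1$, so you cannot count it again.

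What your argument actually proves is a variant of the lemma. It has $\tfrac{\mu(T+1)}{2}D^{\pi^*}_{\pi_T}$ on the left, and $\tfrac{\mu}{2}D^{\pi^*}_{\pi_0}$ and $\tfrac{1}{\mu}\tsum_t\|\Tilde{Q}^t-\Bar{Q}^t\|_{\max}^2$ on the right. Since $D^{\pi^*}_{\pi_T}\geq 0$ sits on the left, this variant does not imply the lemma. Reaching the stated form would need the extra bound $\tfrac{\mu}{2}D^{\pi^*}_{\pi_T}\leq\tfrac{\mu}{2}D^{\pi^*}_{\pi_0}+\tfrac{1}{\mu}\tsum_t\|\Tilde{Q}^t-\Bar{Q}^t\|_{\max}^2$, and nothing in your argument provides it. The defect is structural, not a constant to be tuned. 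If the noise of step $t$ is charged to the next iterate $\pi_{t+1}$, the noise of the last step is necessarily charged to $D^{\pi^*}_{\pi_T}$, where the statement has no slack.

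The paper instead charges the noise to the current iterate, so the $\tfrac{\mu}{2}$ deficit lands on $D^{\pi^*}_{\pi_0}$. This is why the statement carries $\mu D^{\pi^*}_{\pi_0}$ rather than $\tfrac{\mu}{2}D^{\pi^*}_{\pi_0}$, and $\tfrac{2}{\mu}$ rather than $\tfrac{1}{\mu}$. Concretely, split $\langle\Tilde{Q}^t(s,\cdot)-\Bar{Q}^t(s,\cdot),\pi^*(\cdot|s)-\pi_{t+1}(\cdot|s)\rangle$ at $\pi_t(\cdot|s)$ and apply H\"older and Young to each piece. After dividing by $\eta_t$, this gives the bound $(\tfrac{1}{\mu}+\tfrac{\eta_t}{2})\|\Tilde{Q}^t-\Bar{Q}^t\|_{\max}^2+\tfrac{\mu}{2}D^{\pi^*}_{\pi_t}(s)+\tfrac{1}{\eta_t}D^{\pi_{t+1}}_{\pi_t}(s)$. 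The last term cancels against the $D^{\pi_{t+1}}_{\pi_t}(s)$ term of Lemma~\ref{lem:threepoint3} (divided by $\eta_t$), so you must keep that term rather than drop it in Step 1. Each step then contributes $\tfrac{\mu(t+2)}{2}D^{\pi^*}_{\pi_t}-\tfrac{\mu(t+3)}{2}D^{\pi^*}_{\pi_{t+1}}$, with noise coefficient $\tfrac{1}{\mu}(1+\tfrac{1}{t+1})\leq\tfrac{2}{\mu}$. These Bregman terms telescope exactly to $\mu D^{\pi^*}_{\pi_0}-\tfrac{\mu(T+2)}{2}D^{\pi^*}_{\pi_T}$. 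Apart from discarding that term, your Steps 1 and 3 are fine as written.
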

\begin{proof}
    For any iteration $t\in\{0,1,...,T-1\}$ and state $s\in\bS$, taking $\pi=\pi^*$ in Lemma~\ref{lem:threepoint3}, we have
    \begin{align}
        &\eta_t\big[\big\langle c(s,\cdot)+\gamma\Tilde{\P}_sV_t,\pi_{t+1}(\cdot|s)-\pi^*(\cdot|s)\big\rangle+h(\pi_{t+1}(\cdot|s))-h(\pi^*(\cdot|s))\big]+D^{\pi_{t+1}}_{\pi_t}(s)\nn\\ 
        &\leq D^{\pi^*}_{\pi_t}(s)-(1+\eta_t\mu)D^{\pi^*}_{\pi_{t+1}}(s).
        \label{eq1:lem_converge_stoc2}
    \end{align}
    It follows from~\eqref{eq1:lem_converge_stoc2} and the definitions $\Tilde{Q}^t(s,\cdot):=c(s,\cdot)+\gamma\Tilde{\P}_s^tV_t$ and $\Bar{Q}^t(s,\cdot)=c(s,\cdot)+\gamma\P_s\Bar{V}_t$ that
    \begin{align}
        &\eta_t\big[\big\langle c(s,\cdot)+\gamma\P_s\Bar{V}_t,\pi_{t+1}(s)-\pi^*(s)\big\rangle+h(\pi_{t+1}(\cdot|s))-h(\pi^*(\cdot|s))\big]+D^{\pi_{t+1}}_{\pi_t}(s) \nn\\
        &\leq D^{\pi^*}_{\pi_t}(s)-(1+\eta_t\mu)D^{\pi^*}_{\pi_{t+1}}(s)+\eta_t\big\langle\Tilde{Q}^t(s,\cdot)-\Bar{Q}^t(s,\cdot),\pi^*(\cdot|s)-\pi_{t+1}(\cdot|s)\big\rangle.
        \label{eq2:lem_converge_stoc2}
    \end{align}
    Since $\Bar{V}_{t+1}(s)\leq\langle c(s,\cdot)+\gamma\P_s^t\Bar{V}_t,\pi_{t+1}(\cdot|s)\rangle+h(\pi_{t+1}(\cdot|s))$ holds by~\eqref{def:barvt}, and we have
    \begin{align}
        \left\langle c(s,\cdot)+\gamma\P_s\Bar{V}_t,\pi^*(\cdot|s)\right\rangle+h(\pi^*(\cdot|s))&=\left\langle c(s,\cdot)+\gamma\P_sV^*,\pi^*(\cdot|s)\right\rangle+h(\pi^*(\cdot|s))+\left\langle\gamma\P_s(\Bar{V}_t-V^*),\pi^*(\cdot|s)\right\rangle \nn\\
        &=V^*(s)+\gamma(\P_{\pi^*})_s(\Bar{V}_t-V^*),
        \label{eq3:lem_converge_stoc2}
    \end{align}
    where the second equality uses $V^*=c_{\pi^*}+\gamma P_{\pi^*}V^*+h(\pi^*)$, then~\eqref{eq2:lem_converge_stoc2} becomes
    \begin{align}
        \eta_t \big(\Bar{V}_{t+1}(s)-V^*(s)\big)+D^{\pi_{t+1}}_{\pi_t}(s)\leq &D^{\pi^*}_{\pi_t}(s)-(1+\eta_t\mu)D^{\pi^*}_{\pi_{t+1}}(s)+\eta_t\gamma(\P_{\pi^*})_s(\Bar{V}_t-V^*) \nn\\
        &+\eta_t\big\langle\Tilde{Q}^t(s,\cdot)-\Bar{Q}^t(s,\cdot),\pi^*(\cdot|s)-\pi_{t+1}(\cdot|s)\big\rangle.
        \label{eq4:lem_converge_stoc2}
    \end{align}
    Next, we bound the term $\eta_t\langle\Tilde{Q}^t(s,\cdot)-\Bar{Q}^t(s,\cdot),\pi^*(\cdot|s)-\pi_{t+1}(\cdot|s)\rangle$ as
    \begin{align}
        &\eta_t\big\langle\Tilde{Q}^t(s,\cdot)-\Bar{Q}^t(s,\cdot),\pi^*(\cdot|s)-\pi_{t+1}(\cdot|s)\big\rangle \nn\\
        &=\eta_k\big\langle\Tilde{Q}^t(s,\cdot)-\Bar{Q}^t(s,\cdot),\pi^*(\cdot|s)-\pi_t(\cdot|s)\big\rangle+\eta_t\big\langle\Tilde{Q}^t(s,\cdot)-\Bar{Q}^t(s,\cdot),\pi_t(\cdot|s)-\pi_{t+1}(\cdot|s)\big\rangle \nn\\
        &\leq\tfrac{\eta_t}{\mu}\big\|\Tilde{Q}^t(s,\cdot)-\Bar{Q}^t(s,\cdot)\big\|_\infty^2+\tfrac{\eta_t\mu}{4}\|\pi^*(\cdot|s)-\pi_t(\cdot|s)\|_1^2+\tfrac{\eta_t^2}{2}\big\|\Tilde{Q}^t(s,\cdot)-\Bar{Q}^t(s,\cdot)\big\|_\infty^2+\tfrac{1}{2}\|\pi_t(\cdot|s)-\pi_{t+1}(\cdot|s)\|_1^2 \nn\\
        &\leq \eta_t\big(\tfrac{1}{\mu}+\tfrac{\eta_t}{2}\big)\big\|\Tilde{Q}^t(s,\cdot)-\Bar{Q}^t(s,\cdot)\big\|_\infty^2+\tfrac{\eta_t\mu}{2}D_{\pi_t}^{\pi^*}(s)+D_{\pi_t}^{\pi_{t+1}}(s),
        \label{eq5:lem_converge_stoc2}
    \end{align}
    where the first inequality applies Hölder's inequality and Young's inequality. Combining~\eqref{eq4:lem_converge_stoc2} and~\eqref{eq5:lem_converge_stoc2}, we have
    \begin{align}
        \Bar{V}_{t+1}(s)-V^*(s)\leq\tfrac{1+\eta_t\mu/2}{\eta_t}D_{\pi_t}^{\pi^*}(s)-\tfrac{1+\eta_t\mu}{\eta_t}D_{\pi_{t+1}}^{\pi^*}(s)+\gamma(\P_{\pi^*})_s(\Bar{V}_t-V^*)+\big(\tfrac{1}{\mu}+\tfrac{\eta_t}{2}\big)\big\|\Tilde{Q}^t(s,\cdot)-\Bar{Q}^t(s,\cdot)\big\|_\infty^2.
        \label{eq6:lem_converge_stoc2}
    \end{align}
    Taking $\eta_t=2/[\mu(t+1)]$, we obtain
    \begin{align}
        \Bar{V}_{t+1}(s)-V^*(s)\leq\tfrac{\mu(t+2)}{2}D_{\pi_t}^{\pi^*}(s)-\tfrac{\mu(t+3)}{2}D_{\pi_{t+1}}^{\pi^*}(s)+\gamma(\P_{\pi^*})_s(\Bar{V}_t-V^*)+\tfrac{1}{\mu}\big(1+\tfrac{1}{t+1}\big)\big\|\Tilde{Q}^t(s,\cdot)-\Bar{Q}^t(s,\cdot)\big\|_\infty^2
        \label{eq7:lem_converge_stoc2}
    \end{align}
    Since~\eqref{eq7:lem_converge_stoc2} holds for every $s\in\bS$, we have
    \begin{align}
        \Bar{V}_{t+1}-V^*\leq\tfrac{\mu(t+2)}{2}D_{\pi_t}^{\pi^*}-\tfrac{\mu(t+3)}{2}D_{\pi_{t+1}}^{\pi^*}+\gamma\P_{\pi^*}(\Bar{V}_t-V^*)+\tfrac{2}{\mu}\big\|\Tilde{Q}^t-\Bar{Q}^t\big\|_{\max}^2\cdot\mathbf{1}_{\bS},
        \label{eq8:lem_converge_stoc2}
    \end{align}
    and summing up~\eqref{eq8:lem_converge_stoc2} for $t=0,1,...,T-1$ yields~\eqref{res:lem3p3}.
\end{proof}
\vgap

Lemma~\ref{lem:converge_stoc2} bounds the cumulative proxy value gap and the Bregman divergence for each epoch $k\in\{0,1,...,K-1\}$ of Algorithm~\ref{algo:VMD3}. Next, in order to bound $V^{\pi_T}-V^*$, we build up the relation between $\Bar{V}_t$ and $V^{\pi_t}$ for $t=1,2,...,T$, by establishing the approximate monotonicity property $\Bar{V}_t\geq\Gamma_{\pi_t}\Bar{V}_t-\beta_t$ with $\|\beta_t\|_\infty=\cO((1-\gamma)u_k)$. As a consequence, we will have $\Bar{V}_t\geq V^{\pi_t}-(I-\gamma\P_{\pi_t})^{-1}\beta_t$ where $\|(I-\gamma\P_{\pi_t})^{-1}\beta_t\|_\infty=\cO(u_k)$.

For this purpose, we define
\begin{align}
    b:=\sqrt{\mu/[16(\ln T+1)]}
    \label{def:b_stoc2}
\end{align}
and show that for any epoch $k\in\{0,1,...,K-1\}$ of Algorithm~\ref{algo:VMD3},
\begin{align}
    &(a) \ \big\|\Tilde{\P}^t\Bar{V}_t-\P\Bar{V}_t\big\|_{\max}\leq b(1-\gamma)^{\tfrac{3}{2}}u_k^{\tfrac{1}{2}}; \quad (b) \ \big\|\Tilde{Q}^t-\Bar{Q}^t\big\|_{\max}\leq b(1-\gamma)^{\tfrac{1}{2}}u_k^{\tfrac{1}{2}}; \quad (c) \ \|V_{t+1}-\Bar{V}_{t+1}\|_{\infty}\leq b(1-\gamma)^{\tfrac{1}{2}}u_k^{\tfrac{1}{2}}; \nn\\
    &(d) \ \Bar{V}_{t+1}\geq\Gamma_{\pi_{t+1}}\Bar{V}_{t+1}-\beta_{t+1}; \quad (e) \ \Bar{V}_{t+1}\geq V^{\pi_{t+1}}-\tfrac{u_k}{4}\cdot\mathbf{1}_{\bS}
    \label{induc_state:stoc2}
\end{align}
are satisfied for $t=0,1,...,T-1$ with high probability, where
\begin{align}
    \beta_t:=\tsum_{i=1}^t\tfrac{b^2}{2i\mu}(1-\gamma)u_k\cdot\mathbf{1}_{\bS}.
    \label{def:betak_stoc2}
\end{align}

Again, as in Section~\ref{section:stoc1}, we proceed by mathematical induction and divide our analysis into two parts. We first show that~\eqref{induc_state:stoc2} holds for $t=0$ with high probability.

\begin{lemma}
    For any epoch $k\in\{0,1,...,K-1\}$ of Algorithm~\ref{algo:VMD3} and $\delta\in(0,1)$, if $\|\Bar{V}_0-V^*\|_\infty\leq u_k$ and $\|(I-\gamma\P_{\pi^*})^{-1}D_{\pi_0}^{\pi^*}\|_\infty\leq u_k/[\mu(1-\gamma)]$ hold, and the algorithmic parameters
    \begin{align}
        m_{k,1}\geq 200\max\Big\{\tfrac{4(1+\Bar{h})^2}{b^2}(1-\gamma)^{-5}u_k^{-1},1\Big\}\ln(24|\bS||\bA|\delta^{-1}) \quad\text{and}\quad \eta_t=\tfrac{2}{\mu(t+1)} \quad \text{for} \quad t=0,1,...,T-1,
        \label{param:lem_induc1_stoc2}
    \end{align}
    where $b$ is defined in~\eqref{def:b_stoc2}, then with probability at least $1-\delta/2$,~\eqref{induc_state:stoc2} holds for $t=0$, and we have
    \begin{align}
        \big\|\Tilde{\P}^0 V_0-\P V_0\big\|_{\max}\leq \tfrac{b}{2}(1-\gamma)^{\tfrac{3}{2}}u_k^{\tfrac{1}{2}} \quad\text{and}\quad \left\|V_0-\Bar{V}_0\right\|_\infty\leq \tfrac{b}{2}(1-\gamma)^{\tfrac{1}{2}}u_k^{\tfrac{1}{2}}.
        \label{res:lem_induc1_stoc2}
    \end{align}
    \label{lem:induc1_stoc2}
\end{lemma}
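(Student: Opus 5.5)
The plan has three stages. First establish the two estimates in \eqref{res:lem_induc1_stoc2} on a single high-probability event. Then verify the five parts of \eqref{induc_state:stoc2} for $t=0$ deterministically on that event. Finally, spend the remaining probability budget on no new randomness: for $t=0$ we have $\Tilde{\P}^0 V_0$ and, by \eqref{def:tildePt_barvt}, $\Tilde{\P}^0\Bar{V}_0=\Tilde{\P}^0V_0+\hat{\P}^{(m_{k,2})}(\Bar{V}_0-V_0)$. So only the $m_{k,1}$ samples, and possibly the $m_{k,2}$ samples through $\Bar V_0-V_0$, enter.

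\textbf{Concentration for $V_0$ and $\Bar V_0$.} For $k=0$, $V_0=\Bar V_0$ is deterministic. Hoeffding \eqref{hoeffding1}, with $\|V_0\|_\infty\le(1+\Bar h)/(1-\gamma)$, a union bound over $(s,a)$, and $m_{k,1}\ge 800(1+\Bar h)^2 b^{-2}(1-\gamma)^{-5}u_k^{-1}\ln(\cdot)$, gives $\|\Tilde{\P}^0V_0-\P V_0\|_{\max}\le \tfrac b2(1-\gamma)^{3/2}u_k^{1/2}$.

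For $k>0$, apply the same Hoeffding bound to the deterministic vector $\Bar V_0=V^{\pi_0}$ to get $\|\Tilde{\P}^0V^{\pi_0}-\P V^{\pi_0}\|_{\max}\le \tfrac b4(1-\gamma)^{3/2}u_k^{1/2}$. Then mimic Lemma~\ref{lem:error_pv_stoc1}(b). Write
\[
V_0-V^{\pi_0}=\gamma(I-\gamma\Tilde{\P}^0_{\pi_0})^{-1}(\Tilde{\P}^0_{\pi_0}-\P_{\pi_0})V^{\pi_0},
\]
so that $\|V_0-V^{\pi_0}\|_\infty\le(1-\gamma)^{-1}\cdot\tfrac b4(1-\gamma)^{3/2}u_k^{1/2}$, which is the second claim with room to spare. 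The first claim follows from
\[
\Tilde{\P}^0V_0-\P V_0=(\Tilde{\P}^0-\P)V^{\pi_0}+(\Tilde{\P}^0-\P)(V_0-V^{\pi_0}).
\]
The second term is problematic because $V_0$ depends on the samples. I would handle it with a crude $\ell_1$ bound, $\|\Tilde{\P}^0_{s,a}-\P_{s,a}\|_1\le 2$, times $\|V_0-V^{\pi_0}\|_\infty$, which is $\cO(b(1-\gamma)^{1/2}u_k^{1/2})$. This loses a factor of $(1-\gamma)$. The more likely fix is to require a sharper bound $\|V_0-V^{\pi_0}\|_\infty\lesssim b(1-\gamma)^{3/2}u_k^{1/2}$ through the larger constant in $m_{k,1}$, or to use the $\max\{\cdot,1\}$ term together with an $\ell_1$ Bernstein-type bound. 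I expect this sample-dependence issue to be the main obstacle. The factor $24$ in the log (two events each at level $\delta/4$) suggests the author splits the failure probability this way.

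\textbf{Deterministic parts (a)--(e) at $t=0$.} For (a) and (b), the identity $\Tilde{\P}^0\Bar V_0-\P\Bar V_0=(\Tilde{\P}^0V_0-\P V_0)+(\hat{\P}^{(m_{k,2})}-\P)(\Bar V_0-V_0)$ gives $\le \tfrac b2(\cdot)+2\cdot\tfrac b2(\cdot)$. Adjusting constants so that the total is at most $b(1-\gamma)^{3/2}u_k^{1/2}$, part (a) follows. Part (b) then uses $\Tilde Q^0-\Bar Q^0=\gamma(\Tilde{\P}^0V_0-\P\Bar V_0)$, which is bounded by $\gamma[\|\Tilde{\P}^0V_0-\P V_0\|_{\max}+\|V_0-\Bar V_0\|_\infty]\le b(1-\gamma)^{1/2}u_k^{1/2}$.

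For (c), $V_1$ and $\Bar V_1$ are minima of paired quantities with the same policy $\pi_1$. Since $\min$ is $1$-Lipschitz, $|V_1-\Bar V_1|\le\max\{\|\Tilde Q^0-\Bar Q^0\|_{\max},\|V_0-\Bar V_0\|_\infty\}$, and this is within the bound.

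For (d), copy the two-case argument of Lemma~\ref{lem:induc1_stoc1} with exact $\P$. If the first branch of \eqref{def:barvt} is active, then $\Bar V_1=\Gamma_{\pi_1}\Bar V_0\ge\Gamma_{\pi_1}\Bar V_1$ because $\Bar V_0\ge\Bar V_1$. If the second branch is active at $s$, I need $\Bar V_0(s)\ge(\Gamma_{\pi_1}\Bar V_0)(s)-\beta_1(s)$. For $k=0$ this is immediate since $\Bar V_0$ is the maximal value. For $k>0$, $\Bar V_0=V^{\pi_0}=\Gamma_{\pi_0}V^{\pi_0}$, so I need $\Gamma_{\pi_1}V^{\pi_0}\le\Gamma_{\pi_0}V^{\pi_0}+\beta_1$. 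This follows from Lemma~\ref{lem:threepoint3} with $\pi=\pi_0$ and the $\Tilde Q^0$ inexactness. The error inner product $\eta_0\langle\Tilde Q^0-\Bar Q^0,\pi_0-\pi_1\rangle$ is absorbed by Young's inequality against $D^{\pi_1}_{\pi_0}+\eta_0\mu D^{\pi_0}_{\pi_1}$. This leaves $\tfrac1{2\mu}\|\Tilde Q^0-\Bar Q^0\|^2_{\max}\le\tfrac{b^2}{2\mu}(1-\gamma)u_k=\beta_1$, which is exactly why $\beta_t$ takes the form \eqref{def:betak_stoc2}.

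For (e), apply \eqref{eq:approxmonotonebound} to get $\Bar V_1\ge V^{\pi_1}-(1-\gamma)^{-1}\|\beta_1\|_\infty$. Then $(1-\gamma)^{-1}\|\beta_1\|_\infty=b^2u_k/(2\mu)=u_k/[32(\ln T+1)]\le u_k/4$.
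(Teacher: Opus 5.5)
There is a genuine gap, at the point you flagged yourself: the first estimate in \eqref{res:lem_induc1_stoc2} for $k>0$. This estimate is the real content of the lemma, since it later anchors $\Tilde{\P}^{t}\Bar V_t$ at scale $(1-\gamma)^{3/2}$ in Lemma~\ref{lem:monotone_stoc2}.

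Your split $(\Tilde{\P}^0-\P)V_0=(\Tilde{\P}^0-\P)V^{\pi_0}+(\Tilde{\P}^0-\P)(V_0-V^{\pi_0})$ is the right start. Your bound on $\|V_0-V^{\pi_0}\|_\infty$ is also correct: the empirical-resolvent identity plus Hoeffding on the conditionally deterministic $V^{\pi_0}$ is a simpler route to the second estimate than the paper's. The problem is the cross term. The crude bound $2\|V_0-V^{\pi_0}\|_\infty$ is $O(b(1-\gamma)^{1/2}u_k^{1/2})$, a factor $(1-\gamma)^{-1}$ too large. Neither proposed fix works:
\begin{itemize}
\item Making $\|V_0-V^{\pi_0}\|_\infty=O(b(1-\gamma)^{3/2}u_k^{1/2})$ requires enlarging $m_{k,1}$ by at least a factor $(1-\gamma)^{-1}$, even with Bernstein and Lemma~\ref{lem:total_variance}. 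A larger constant is not enough.
\item $\ell_1$ concentration of $\Tilde{\P}^0_{s,a}-\P_{s,a}$ costs a factor $\sqrt{|\bS|}$.
\end{itemize}

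The missing idea is the leave-one-out decoupling behind Lemma~\ref{lem:error_pv_stoc1}(b).
\begin{itemize}
\item Replace row $(s,a)$ of $\Tilde{\P}^0$ by $\P_{s,a}$, and let $\widehat V^{(s,a)}$ be the value of $\pi_0$ under this kernel. It is independent of $\Tilde{\P}^0_{s,a}$.
\item Apply Hoeffding to $(\Tilde{\P}^0_{s,a}-\P_{s,a})^{\tr}(\widehat V^{(s,a)}-V^{\pi_0})$.
\item Control $V_0-\widehat V^{(s,a)}$ through the rank-one Sherman--Morrison update.
\end{itemize}
Once $m_{k,1}\gtrsim(1-\gamma)^{-2}\ln(|\bS||\bA|\delta^{-1})$, this gives \eqref{eq15:appdx_error} simultaneously for all $(s,a)$:
\[
\big|(\Tilde{\P}^0_{s,a}-\P_{s,a})^{\tr}(V_0-V^{\pi_0})\big|\le\tfrac{4}{9}(1-\gamma)\|V_0-V^{\pi_0}\|_\infty+\tfrac{7}{9}\big|(\Tilde{\P}^0_{s,a}-\P_{s,a})^{\tr}V^{\pi_0}\big|.
\]
The extra factor $(1-\gamma)$ here is exactly what you lack. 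With the stated $m_{k,1}$ your Hoeffding step actually gives $\tfrac{b}{20}$ rather than $\tfrac b4$. Combined with your two bounds, this yields $\|\Tilde{\P}^0V_0-\P V_0\|_{\max}\le\tfrac b2(1-\gamma)^{3/2}u_k^{1/2}$. The paper gets both estimates at once by invoking Lemma~\ref{lem:error_pv_stoc1}(b) with $u=\min\{\tfrac b2(1-\gamma)u_k^{1/2},1+\Bar h\}$ at confidence $\delta/4$, which is where $\ln(24|\bS||\bA|\delta^{-1})$ comes from. It also uses $\sqrt{\sigma_{V^{\pi_0}}}\le(1+\Bar h)/(1-\gamma)$.

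A smaller instance of the same loss appears in your part (a) at $t=0$. Under your reading of \eqref{def:tildePt_barvt}, bounding $(\hat{\P}^{(m_{k,2})}-\P)(\Bar V_0-V_0)$ by $2\|\Bar V_0-V_0\|_\infty$ again gives only $O(b(1-\gamma)^{1/2}u_k^{1/2})$. No choice of constants turns this into $b(1-\gamma)^{3/2}u_k^{1/2}$. Using Hoeffding on the fresh $m_{k,2}$ samples instead would need a hypothesis on $m_{k,2}$ that this lemma does not have. The paper reads $\Tilde{\P}^0\Bar V_0$ at $t=0$ as the algorithm's matrix $\Tilde{\P}^0=\hat{\P}^{(m_{k,1})}$ applied to $\Bar V_0$, which is independent of the epoch-$k$ samples. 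Then (a) is a single Hoeffding bound at level $\delta/4$, using $\|\Bar V_0\|_\infty\le(1+\Bar h)/(1-\gamma)$.

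Your parts (b)--(e) are sound:
\begin{itemize}
\item Bounding $\Tilde Q^0-\Bar Q^0=\gamma(\Tilde{\P}^0V_0-\P V_0)+\gamma\P(V_0-\Bar V_0)$ directly from \eqref{res:lem_induc1_stoc2} is slightly cleaner than the paper's route through (a).
\item The $k=0$ case of (d), via $\Bar V_0\ge\Gamma_{\pi}\Bar V_0$ for every $\pi$, is correct.
\item Your Young's-inequality step leaves at most the residual $\tfrac{1}{2\mu}\|\Tilde Q^0-\Bar Q^0\|_{\max}^2\le\beta_1$, as in \eqref{eq5:lem_monotone_stoc2}.
\end{itemize}
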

\begin{proof}
    First, in view of the choice $m_{k,1}$ in~\eqref{param:lem_induc1_stoc2}, $\|\Bar{V}_0\|_\infty\leq (1+\Bar{h})/(1-\gamma)$ and Lemma~\ref{lem:concentration_bounds}$(a)$,~\eqref{induc_state:stoc2}$(a)$ holds for $t=0$ with probability at least $1-\delta/4$.
    
    Next, we show that~\eqref{res:lem_induc1_stoc2} also holds with probability at least $1-\delta/4$. Notice that it trivially holds when the epoch $k=0$, because $V_0=\Bar{V}_0$; when $k>0$, this follows from Lemma~\ref{lem:error_pv_stoc1}$(b)$ with $u=\min\{(b/2)(1-\gamma)u_k^{1/2},1+\Bar{h}\}$, $\Bar{V}_0=V^{\pi_0}$ by definition, the choice $m_{k,1}$ in~\eqref{param:lem_induc1_stoc2} and $\|\sqrt{\sigma_{V^{\pi_0}}}\|_{\max}\leq\|V^{\pi_0}\|_\infty\leq(1+\Bar{h})/(1-\gamma)$.
    
    Then, we obtain that~\eqref{induc_state:stoc2}$(b)$ holds for $t=0$ by
    \begin{align}
        &\big\|\Tilde{Q}^0-\Bar{Q}^0\big\|_{\max}\leq \gamma\big(\big\|\Tilde{\P}^0V_0-\Tilde{\P}^0\Bar{V}_0\big\|_{\max}+\big\|\Tilde{\P}^0\Bar{V}_0-\P\Bar{V}_0\big\|_{\max}\big)\leq\gamma\left\|V_0-\Bar{V}_0\right\|_\infty+b(1-\gamma)^{\tfrac{3}{2}}u_k^{\tfrac{1}{2}} \nn\\
        &\leq b\gamma(1-\gamma)^{\tfrac{1}{2}}u_k^{\tfrac{1}{2}}+b(1-\gamma)^{\tfrac{3}{2}}u_k^{\tfrac{1}{2}}=b(1-\gamma)^{\tfrac{1}{2}}u_k^{\tfrac{1}{2}},
        \label{eq2:lem_monotone_stoc2}
    \end{align}
    where the second inequality uses~\eqref{induc_state:stoc2}$(a)$ for $t=0$, and the third inequality applies~\eqref{res:lem_induc1_stoc2}. Moreover,~\eqref{induc_state:stoc2}$(c)$ holds for $t=0$ by
    \begin{align}
        &|V_1(s)-\Bar{V}_1(s)|=\big|\min\big\{\big\langle\Tilde{Q}^0(s,\cdot),\pi_1(\cdot|s)\big\rangle+h(\pi_1(\cdot|s)),V_0(\cdot|s)\big\}-\min\big\{\big\langle\Bar{Q}^0(s,\cdot),\pi_1(\cdot|s)\big\rangle+h(\pi_1(\cdot|s)),\Bar{V}_0(s)\big\}\big| \nn\\
        &\leq\max\big\{\big|\big\langle\Tilde{Q}^0(s,\cdot)-\Bar{Q}^0(s,\cdot),\pi_1(\cdot|s)\big\rangle\big|,|V_0(s)-\Bar{V}_0(s)|\big\}\leq\max\big\{\big\|\Tilde{Q}^0(s,\cdot)-\Bar{Q}^0(s,\cdot)\big\|_{\infty},|V_0(s)-\Bar{V}_0(s)|\big\} \nn\\
        &\leq b(1-\gamma)^{\tfrac{1}{2}}u_k^{\tfrac{1}{2}} \quad\forall s\in\bS,
        \label{eq3:lem_monotone_stoc2}
    \end{align}
    where the first inequality holds since the map $(x,y)\mapsto\min\{x,y\}$ is $1$-Lipschitz with respect to $\|\cdot\|_\infty$, and the third inequality follows from~\eqref{induc_state:stoc2}$(a)$ and~\eqref{induc_state:stoc2}$(b)$.
    
    Next, in order to show~\eqref{induc_state:stoc2}$(d)$ for $t=0$, we show $\Bar{V}_1(s)\geq(\Gamma_{\pi_1}V_1)(s)-\beta_1(s)$ for any $s\in\bS$ under two cases specified by~\eqref{def:barvt}. In the first case $\Bar{V}_1(s)=\left\langle c(s,\cdot)+\gamma\P_s\Bar{V}_t,\pi_{t+1}(\cdot|s)\right\rangle+h(\pi_{t+1}(\cdot|s))$, we directly have $\Bar{V}_1(s)=(\Gamma_{\pi_1}V_0)(s)\geq (\Gamma_{\pi_1}V_1)(s)$. Now we consider the second case $\Bar{V}_1(s)=\Bar{V}_0(s)$. For any $t\in\{0,1,...,T-1\}$, taking $\pi=\pi_t$ in Lemma~\ref{lem:threepoint3} yields
    \begin{align}
        \eta_t\big[\big\langle\Tilde{Q}^t(s,\cdot),\pi_{t+1}(\cdot|s)-\pi_t(\cdot|s)\big\rangle+h(\pi_{t+1}(\cdot|s))-h(\pi_t(\cdot|s))\big]+D^{\pi_{t+1}}_{\pi_t}(s)\leq -(1+\eta_t\mu)D^{\pi_t}_{\pi_{t+1}}(s),
        \label{eq4:lem_monotone_stoc2}
    \end{align}
    which can be further simplified as
    \begin{align}
        (\Gamma_{\pi_{t+1}}\Bar{V}_t)(s)-(\Gamma_{\pi_t}\Bar{V}_t)(s)&\leq \big\langle\Tilde{Q}^t(s,\cdot)-\Bar{Q}^t(s,\cdot),\pi_t(\cdot|s)-\pi_{t+1}(\cdot|s)\big\rangle-\tfrac{1}{\eta_t}\big[D^{\pi_{t+1}}_{\pi_t}(s)+(1+\eta_t\mu)D^{\pi_t}_{\pi_{t+1}}(s)\big] \nn\\
        &\leq\tfrac{\eta_t}{4}\big\|\Tilde{Q}^t(s,\cdot)-\Bar{Q}^t(s,\cdot)\big\|_\infty^2+\tfrac{1}{\eta_t}\|\pi_t(\cdot|s)-\pi_{t+1}(\cdot|s)\|_1^2-\tfrac{1}{\eta_t}\big[D^{\pi_{t+1}}_{\pi_t}(s)+D^{\pi_t}_{\pi_{t+1}}(s)\big] \nn\\
        &\leq\tfrac{\eta_t}{4}\big\|\Tilde{Q}^t(s,\cdot)-\Bar{Q}^t(s,\cdot)\big\|_\infty^2\leq\tfrac{b^2}{2\mu(t+1)}(1-\gamma)u_k,
        \label{eq5:lem_monotone_stoc2}
    \end{align}
    where the second inequality applies Hölder's inequality and Young's inequality, and the fourth inequality follows from the choice of $\eta_t$ in~\eqref{param:lem_induc1_stoc2} and~\eqref{eq2:lem_monotone_stoc2}. Hence, we have
    \begin{align}
        \Bar{V}_1(s)=\Bar{V}_0(s)=\left(\Gamma_{\pi_0}\Bar{V}_0\right)(s)\geq\left(\Gamma_{\pi_1}\Bar{V}_0\right)(s)-\tfrac{b^2}{2\mu}(1-\gamma)u_k\geq\left(\Gamma_{\pi_1}\Bar{V}_1\right)(s)-\beta_1(s),
        \label{eq6:lem_monotone_stoc2}
    \end{align}
    where the first inequality uses~\eqref{eq5:lem_monotone_stoc2}, and the second inequality utilizes $\Bar{V}_0\geq\Bar{V}_1$ and the definition of $\beta_t$ in~\eqref{def:betak_stoc2}. Combining the two cases above implies~\eqref{induc_state:stoc2}$(d)$ for $t=1$.
    
    Finally,~\eqref{induc_state:stoc2}$(e)$ for $t=1$ follows from~\eqref{induc_state:stoc2}$(d)$, i.e.,
    \begin{align}
        \Bar{V}_1\geq\Gamma_{\pi_1}\Bar{V}_1-\beta_1\geq V^{\pi_1}-(I-\gamma\P_{\pi_1})^{-1}\beta_1\geq V^{\pi_1}-\tfrac{b^2}{2\mu}u_k\cdot\mathbf{1}_{\bS}\geq V^{\pi_1}-\tfrac{u_k}{4}\cdot\mathbf{1}_{\bS},
        \label{eq7:lem_monotone_stoc2}
    \end{align}
    where the last inequality applies $b=\sqrt{\mu/[16(\ln T+1)]}\leq\sqrt{\mu/2}$. This completes the proof.
\end{proof}
\vgap

Next, we proceed to the second part of the induction and establish the high probability result~\eqref{induc_state:stoc2} for the subsequent iterations $t=1,2,...,T-1$.

\begin{lemma}
    For any epoch $k\in\{0,1,...,K-1\}$ of Algorithm~\ref{algo:VMD3} and $\delta\in(0,1)$, if $\|\Bar{V}_0-V^*\|_\infty\leq u_k$ and $\|(I-\gamma\P_{\pi^*})^{-1}D_{\pi_0}^{\pi^*}\|_\infty\leq u_k/[\mu(1-\gamma)]$ hold, and the algorithmic parameters are set to
    \begin{align}
        &\eta_t=\tfrac{2}{\mu(t+1)} \ \text{for} \ t=0,1,...,T-1, \quad m_{k,1}\geq 200\max\Big\{\tfrac{4(1+\Bar{h})^2}{b^2}(1-\gamma)^{-5}u_k^{-1},1\Big\}\ln(24|\bS||\bA|\delta^{-1}) \quad\text{and}\quad \nn\\
        &m_{k,2}\geq \big(\tfrac{25u_0}{b^2}+16\big)(1-\gamma)^{-3}\ln[4(T-1)|\bS||\bA|\delta^{-1}],
        \label{param:lem_monotone_stoc2}
    \end{align}
    where $b$ is defined in~\eqref{def:b_stoc2}, then~\eqref{induc_state:stoc2} holds for $t=0,1,...,T-1$ with probability at least $1-\delta$.
    \label{lem:monotone_stoc2}
\end{lemma}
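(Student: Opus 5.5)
We argue by induction on $t$, following the scheme of Lemma~\ref{lem:induc2_stoc1} and Lemma~\ref{lem:induc4_stoc1}. The base case $t=0$ is Lemma~\ref{lem:induc1_stoc2}. It holds with probability at least $1-\delta/2$ and also supplies~\eqref{res:lem_induc1_stoc2}. For the inductive step, fix $t'\in\{1,\dots,T-1\}$ and assume~\eqref{induc_state:stoc2} holds for $t=0,\dots,t'-1$. We show it holds for $t=t'$ with conditional probability at least $1-\delta/[2(T-1)]$. A union bound over $t'$ then gives total probability at least $1-\delta$.

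\textbf{Step 1: bound $\|\Bar V_{t'}-V_0\|_\infty$.} Monotonicity of the proxy sequence gives $\Bar V_0\ge\Bar V_{t'}$. Part $(e)$ at $t'-1$ gives $\Bar V_{t'}\ge V^{\pi_{t'}}-(u_k/4)\mathbf 1_{\bS}\ge V^*-(u_k/4)\mathbf 1_{\bS}$. Combined with $\|\Bar V_0-V^*\|_\infty\le u_k$, this yields $\|\Bar V_{t'}-\Bar V_0\|_\infty\le 2u_k$. By~\eqref{res:lem_induc1_stoc2}, $\|V_0-\Bar V_0\|_\infty\le (b/2)(1-\gamma)^{1/2}u_k^{1/2}$, so $\|\Bar V_{t'}-V_0\|_\infty=\cO(u_k)+\cO(b(1-\gamma)^{1/2}u_k^{1/2})$. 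Since $u_k\le u_0$, this is at most a constant times $\sqrt{u_0u_k}$ plus lower-order terms.

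The main subtlety is that $\Bar V_{t'}$ is built from $\pi_1,\dots,\pi_{t'}$. These policies depend on the $m_{k,2}$ samples through $V_1,\dots,V_{t'}$, so $\Bar V_{t'}$ is not independent of $\hat\P^{(m_{k,2})}$. I would handle this the way the paper implicitly does in Lemma~\ref{lem:induc4_stoc1}: use fresh samples per iteration, i.e.\ conditional independence given the past. If the same samples are reused, a net/covering argument over the deterministic $\|\cdot\|_\infty$ ball is the fallback. I expect this to be the main obstacle.

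\textbf{Step 2: concentration for part $(a)$.} Apply Lemma~\ref{lem:error_pv_stoc1}$(c)$ (Hoeffding) to $\Bar V_{t'}-V_0$. With $m_{k,2}\ge(25u_0/b^2+16)(1-\gamma)^{-3}\ln[\cdot]$, this gives
$\|\hat\P^{(m_{k,2})}(\Bar V_{t'}-V_0)-\P(\Bar V_{t'}-V_0)\|_{\max}\le \tfrac b2(1-\gamma)^{3/2}u_k^{1/2}$.
Add the bound $\|\Tilde\P^0V_0-\P V_0\|_{\max}\le (b/2)(1-\gamma)^{3/2}u_k^{1/2}$ from~\eqref{res:lem_induc1_stoc2}. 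Using definition~\eqref{def:tildePt_barvt}, this gives $(a)$ at $t'$.

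\textbf{Step 3: deduce $(b)$ and $(c)$.} Part $(b)$ follows exactly as in~\eqref{eq2:lem_monotone_stoc2}. First write $\|\Tilde Q^{t'}-\Bar Q^{t'}\|_{\max}\le\gamma\|\Tilde\P^{t'}V_{t'}-\Tilde\P^{t'}\Bar V_{t'}\|_{\max}+\gamma\|\Tilde\P^{t'}\Bar V_{t'}-\P\Bar V_{t'}\|_{\max}$. The first term equals $\gamma\|\hat\P^{(m_{k,2})}(V_{t'}-\Bar V_{t'})\|_{\max}\le\gamma\|V_{t'}-\Bar V_{t'}\|_\infty$, since $\hat\P$ is stochastic. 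Then apply $(c)$ at $t'-1$ and $(a)$ at $t'$; the factors combine as $\gamma+(1-\gamma)=1$. Part $(c)$ at $t'$ follows by the 1-Lipschitz property of $\min$, as in~\eqref{eq3:lem_monotone_stoc2}.

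\textbf{Step 4: approximate monotonicity, parts $(d)$ and $(e)$.} Split on the two cases of~\eqref{def:barvt}. If $\Bar V_{t'+1}(s)=(\Gamma_{\pi_{t'+1}}\Bar V_{t'})(s)$, then monotonicity gives $\ge(\Gamma_{\pi_{t'+1}}\Bar V_{t'+1})(s)$. Otherwise $\Bar V_{t'+1}(s)=\Bar V_{t'}(s)$. In that case use $(d)$ at $t'-1$ together with~\eqref{eq5:lem_monotone_stoc2}, whose one-step increase is $b^2(1-\gamma)u_k/[2\mu(t'+1)]$. Chaining, $\beta_{t'}+b^2(1-\gamma)u_k/[2\mu(t'+1)]=\beta_{t'+1}$, which is exactly why $\beta_t$ in~\eqref{def:betak_stoc2} is a harmonic sum.

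Finally, part $(e)$ follows as in~\eqref{eq7:lem_monotone_stoc2}. We have $(I-\gamma\P_{\pi})^{-1}\beta_{t'+1}\le\frac{b^2(\ln T+1)}{2\mu}u_k\mathbf 1_{\bS}=\frac{u_k}{32}\mathbf 1_{\bS}\le\frac{u_k}{4}\mathbf 1_{\bS}$ by the choice of $b$ in~\eqref{def:b_stoc2}.
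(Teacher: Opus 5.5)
Your proposal is correct and follows the paper's proof essentially step for step: the base case comes from Lemma~\ref{lem:induc1_stoc2}, Hoeffding is applied to $\Bar V_{t'}-V_0$, then $(b)$ and $(c)$ follow from the $1$-Lipschitz $\min$, $(d)$ from \eqref{eq5:lem_monotone_stoc2} with the harmonic accumulation of $\beta_t$, and $(e)$ from $(d)$. On your sampling concern: Algorithm~\ref{algo:VMD3} draws a fresh batch of $m_{k,2}$ samples in each iteration, and $\Bar V_{t'}$ and $V_0$ are determined by earlier samples, so conditioning on the past suffices and no covering argument is needed.

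One small correction: your own chain $\Bar V_0\ge\Bar V_{t'}\ge V^*-\tfrac{u_k}{4}\mathbf{1}_{\bS}\ge\Bar V_0-\tfrac{5u_k}{4}\mathbf{1}_{\bS}$ gives $\|\Bar V_{t'}-\Bar V_0\|_\infty\le\tfrac{5}{4}u_k$, and you need this sharper constant rather than $2u_k$. The stated $m_{k,2}$ is calibrated to it, since $8\cdot 2\cdot(5/4)^2=25$; with $2u_k$ the coefficient would have to be $64u_0/b^2$ instead of $25u_0/b^2$.
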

\begin{proof}
    In view of Lemma~\ref{lem:induc1_stoc2}, for any $t'\in\{1,2,...,T-1\}$, let us suppose that~\eqref{induc_state:stoc2} holds for $t=0,1,...,t'-1$, and~\eqref{res:lem_induc1_stoc2} is satisfied. Then we show that~\eqref{induc_state:stoc2} holds for $t=t'$ with probability at least $1-\delta/[2(T-1)]$. In order to show~\eqref{induc_state:stoc2}$(a)$, we first have
    \begin{align}
        \|\Bar{V}_{t'}-V_0\|_\infty\leq\|\Bar{V}_{t'}-\Bar{V}_0\|_\infty+\|\Bar{V}_0-V_0\|_\infty\leq\tfrac{5}{4}u_k+b(1-\gamma)^{\tfrac{1}{2}}u_k^{\tfrac{1}{2}}\leq\big(\tfrac{5}{4}u_0^{\tfrac{1}{2}}+b\big)u_k^{\tfrac{1}{2}},
        \label{eq8:lem_monotone_stoc2}
    \end{align}
    where the second inequality follows from $\|\Bar{V}_{t'}-\Bar{V}_0\|_\infty\leq 5u_k/4$ by using $\Bar{V}_0\geq\Bar{V}_{t'}$ and $\Bar{V}_{t'}\geq V^{\pi_{t'}}-(u_k/4)\cdot\mathbf{1}_{\bS}\geq V^*-(u_k/4)\cdot\mathbf{1}_{\bS}\geq \Bar{V}_0-(5u_k/4)\cdot\mathbf{1}_{\bS}$, as well as~\eqref{res:lem_induc1_stoc2}, and the third inequality uses $u_k\leq u_0$. Then, in view of Lemma~\ref{lem:concentration_bounds}$(a)$, the choice of $m_{k,2}$ in~\eqref{param:lem_monotone_stoc2}, and $\|\Bar{V}_{t'}-V_0\|_\infty^2\leq [(25/8)u_0+2b^2]u_k$ from~\eqref{eq8:lem_monotone_stoc2}, we have
    \begin{align}
        \big\|\hat{\P}^{(m_{k,2})}(\Bar{V}_{t'}-V_0)-\P(\Bar{V}_{t'}-V_0)\big\|_{\max}\leq m_{k,2}^{-\tfrac{1}{2}}\sqrt{2\ln[4(T-1)|\bS||\bA|\delta^{-1}]}\|\Bar{V}_{t'}-V_0\|_\infty\leq \tfrac{b}{2}(1-\gamma)^{\tfrac{3}{2}}u_k^{\tfrac{1}{2}}
        \label{eq9:lem_monotone_stoc2}
    \end{align}
    with probability at least $1-\delta/[2(T-1)]$. Then, in view of the definition of $\Tilde{\P}^t\Bar{V}_t$ in~\eqref{def:tildePt_barvt}, we obtain~\eqref{induc_state:stoc2}$(a)$ for $t=t'$ by combining~\eqref{res:lem_induc1_stoc2} and~\eqref{eq9:lem_monotone_stoc2}. Next, we show that~\eqref{induc_state:stoc2}$(b)$ holds for $t=t'$ by
    \begin{align}
        &\big\|\Tilde{Q}^{t'}-\Bar{Q}^{t'}\big\|_{\max}=\gamma\big\|\Tilde{\P}^{t'}V_{t'}-\Tilde{\P}^{t'}\Bar{V}_{t'}+\Tilde{\P}^{t'}\Bar{V}_{t'}-\P\Bar{V}_{t'}\big\|_{\max} \nn\\
        &\leq\gamma\big\|\big[\Tilde{\P}^0V_0+\hat{\P}^{(m_{k,2})}(V_{t'}-V_0)\big]-\big[\Tilde{\P}^0V_0+\hat{\P}^{(m_{k,2})}(\Bar{V}_{t'}-V_0)\big]\big\|_{\max}+\gamma\big\|\Tilde{\P}^{t'}\Bar{V}_{t'}-\P\Bar{V}_{t'}\big\|_{\max} \nn\\
        &\leq\gamma\big\|\hat{\P}^{(m_{k,2})}(V_{t'}-\Bar{V}_{t'})\big\|_{\max}+b(1-\gamma)^{\tfrac{3}{2}}u_k^{\tfrac{1}{2}}\leq\gamma\|V_{t'}-\Bar{V}_{t'}\|_\infty+b(1-\gamma)^{\tfrac{3}{2}}u_k^{\tfrac{1}{2}}\leq b(1-\gamma)^{\tfrac{1}{2}}u_k^{\tfrac{1}{2}},
        \label{eq10:lem_monotone_stoc2}
    \end{align}
    where the second inequality follows from~\eqref{induc_state:stoc2}$(a)$ for $t=t'$, and the fourth inequality uses $\|V_{t'}-\Bar{V}_{t'}\|_\infty\leq b(1-\gamma)^{1/2}u_k^{1/2}$ by~\eqref{induc_state:stoc2}$(c)$ for $t=t'-1$. Moreover, we obtain~\eqref{induc_state:stoc2}$(c)$ for $t=t'$ by
    \begin{align}
        &\|V_{t'+1}(s)-\Bar{V}_{t'+1}(s)\| \nn\\
        &=\big|\min\big\{\big\langle\Tilde{Q}^{t'}(s,\cdot),\pi_{t'+1}(\cdot|s)\big\rangle+h(\pi_{t'+1}(\cdot|s)),V_{t'}(s)\big\}-\min\big\{\big\langle\Bar{Q}^{t'}(s,\cdot),\pi_{t'+1}(\cdot|s)\big\rangle+h(\pi_{t'+1}(\cdot|s)),\Bar{V}_{t'}(s)\big\}\big| \nn\\
        &\leq\max\big\{\big|\big\langle\Tilde{Q}^{t'}(s,\cdot)-\Bar{Q}^{t'}(s,\cdot),\pi_{t'+1}(\cdot|s)\big\rangle\big|,|V_{t'}(s)-\Bar{V}_{t'}(s)|\big\}\leq\max\big\{\big\|\Tilde{Q}^{t'}(s,\cdot)-\Bar{Q}^{t'}(s,\cdot)\big\|_{\infty},|V_{t'}(s)-\Bar{V}_{t'}(s)|\big\} \nn\\
        &\leq b(1-\gamma)^{\tfrac{1}{2}}u_k^{\tfrac{1}{2}} \quad\forall s\in\bS,
        \label{eq11:lem_monotone_stoc2}
    \end{align}
    where the first inequality holds because the map $(x,y)\mapsto\min\{x,y\}$ is $1$-Lipschitz with respect to $\|\cdot\|_\infty$, and the third inequality uses~\eqref{induc_state:stoc2}$(a)$ and~\eqref{induc_state:stoc2}$(b)$ for $t=t'$.
    
    Now we show~\eqref{induc_state:stoc2}$(d)$ for $t=t'+1$, i.e., $\Bar{V}_{t'+1}(s)\geq(\Gamma_{\pi_{t'+1}}V_{t'+1})(s)-\beta_{t'+1}(s)$ for any $s\in\bS$, under two cases specified by~\eqref{def:barvt}. The first case $\Bar{V}_{t'+1}(s)=\left\langle c(s,\cdot)+\gamma\P_s\Bar{V}_{t'},\pi_{t'+1}(s)\right\rangle+h(\pi_{t'+1}(s))=(\Gamma_{\pi_{t'+1}}V_{t'})(s)\geq (\Gamma_{\pi_{t'+1}}V_{t'+1})(s)$ is trivial. For the second case $\Bar{V}_{t'+1}(s)=\Bar{V}_{t'}(s)$, we have
    \begin{align}
        \Bar{V}_{t'+1}(s)=\Bar{V}_{t'}(s)\geq (\Gamma_{\pi_{t'}}\Bar{V}_{t'})(s)-\beta_{t'}(s)\geq (\Gamma_{\pi_{t'+1}}\Bar{V}_{t'})(s)-\beta_{t'}(s)-\tfrac{b^2(1-\gamma)u_k}{2\mu(t'+1)}\geq (\Gamma_{\pi_{t'+1}}\Bar{V}_{t'+1})(s)-\beta_{t'+1}(s),
        \label{eq12:lem_monotone_stoc2}
    \end{align}
    where the first inequality follows from~\eqref{induc_state:stoc2}$(d)$ for $t=t'-1$, the second inequality uses~\eqref{eq5:lem_monotone_stoc2}, and the third inequality applies $\Bar{V}_{t'}\geq\Bar{V}_{t'+1}$ and the definition of $\beta_{t'}$ in~\eqref{def:betak_stoc2}. Hence,~\eqref{induc_state:stoc2}$(d)$ holds for $t=t'+1$ by combining the two cases.
    
    Finally,~\eqref{induc_state:stoc2}$(d)$ directly implies~\eqref{induc_state:stoc2}$(e)$ by
    \begin{align}
        \big\|(I-\gamma\P_{\pi_{t'+1}})^{-1}\beta_{t'+1}\big\|_\infty\leq (1-\gamma)^{-1}\|\beta_{t'+1}\|_\infty\leq\tfrac{b^2}{2\mu}u_k\cdot\big(\tsum_{i=1}^{t'}\tfrac{1}{i}\big)\leq\tfrac{b^2(\ln T+1)}{2\mu}u_k\leq\tfrac{u_k}{4}.
    \end{align}
    This concludes the inductive arguments and completes the proof.
\end{proof}
\vgap

Combining Lemma~\ref{lem:converge_stoc2} and Lemma~\ref{lem:monotone_stoc2} yields the following single-epoch convergence result of Algorithm~\ref{algo:VMD3}.

\begin{proposition}
    For any epoch $k\in\{0,1,...,K-1\}$ of Algorithm~\ref{algo:VMD3} and $\delta\in(0,1)$, suppose that $\|\Bar{V}_0-V^*\|_\infty\leq u_k$ and $\|(I-\gamma\P_{\pi^*})^{-1}D_{\pi_0}^{\pi^*}\|_\infty\leq u_k/[\mu(1-\gamma)]$ hold, and the algorithmic parameters are set to
    \begin{align}
        &T=\left\lceil\tfrac{18}{1-\gamma}\right\rceil, \quad \eta_t=\tfrac{2}{\mu(t+1)} \ \text{for} \ t=0,1,...,T-1, \quad m_{k,1}\geq 200\max\Big\{\tfrac{4(1+\Bar{h})^2}{b^2}(1-\gamma)^{-5}u_k^{-1},1\Big\}\ln(24|\bS||\bA|\delta^{-1}), \nn\\
        &\text{and}\quad m_{k,2}\geq\Big[\tfrac{25(1+\Bar{h}+\mu D_0)}{b^2}+16\Big](1-\gamma)^{-4}\ln[4(T-1)|\bS||\bA|\delta^{-1}] 
        \label{param:prop_stoc2}
    \end{align}
    where $b$ is defined in~\eqref{def:b_stoc2}, then we have
    \begin{align}
        \big\|V^{\pi_T}-V^*\big\|_\infty\leq u_{k+1}=\tfrac{u_k}{2} \quad\text{and}\quad \big\|(I-\gamma\P_{\pi^*})^{-1}D_{\pi_T}^{\pi^*}\big\|_\infty\leq\tfrac{u_{k+1}}{\mu(1-\gamma)}=\tfrac{u_k}{2\mu(1-\gamma)}
        \label{res:prop_stoc2}
    \end{align}
    with probability at least $1-\delta$.
    \label{prop:stoc2}
\end{proposition}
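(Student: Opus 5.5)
The plan is to mirror the proof of Proposition~\ref{prop:stoc1}, with Lemma~\ref{lem:converge_stoc2} playing the role of Lemma~\ref{lem:converge_stoc1} and Lemma~\ref{lem:monotone_stoc2} playing the role of Lemma~\ref{lem:monotone_stoc1}.

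\textbf{Step 1: the high-probability event.} First I check that the sample sizes in~\eqref{param:prop_stoc2} dominate those required by Lemma~\ref{lem:monotone_stoc2}. The key observation is that $u_0=\max\{(1+\Bar{h})/(1-\gamma),\mu D_0\}\le (1+\Bar{h}+\mu D_0)/(1-\gamma)$. Hence $\tfrac{25u_0}{b^2}(1-\gamma)^{-3}\le \tfrac{25(1+\Bar{h}+\mu D_0)}{b^2}(1-\gamma)^{-4}$, and the constant term $16(1-\gamma)^{-3}$ is dominated by $16(1-\gamma)^{-4}$. Therefore, with probability at least $1-\delta$, the statements~\eqref{induc_state:stoc2} hold for all $t$. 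In particular, for every $t$:
\begin{itemize}
\item $\|\Tilde{Q}^t-\Bar{Q}^t\|_{\max}^2\le b^2(1-\gamma)u_k$;
\item $\Bar{V}_T\ge V^{\pi_T}-(u_k/4)\mathbf{1}_{\bS}$;
\item $\Bar{V}_0\ge \Bar{V}_1\ge\cdots\ge\Bar{V}_T$, which is immediate from the min in~\eqref{def:barvt}.
\end{itemize}

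\textbf{Step 2: invert Lemma~\ref{lem:converge_stoc2}.} I multiply both sides of~\eqref{res:lem3p3} by the monotone operator $(I-\gamma\P_{\pi^*})^{-1}$. On the left, I use $\Bar{V}_t-V^*\ge \Bar{V}_T-V^*\ge V^{\pi_T}-V^*-(u_k/4)\mathbf{1}_{\bS}$ together with $(I-\gamma\P_{\pi^*})^{-1}x\ge x$ for $x\ge 0$. Subtracting the $u_k/4$ shifts then yields
\[
T(V^{\pi_T}-V^*)+\tfrac{\mu(T+2)}{2}(I-\gamma\P_{\pi^*})^{-1}D_{\pi_T}^{\pi^*}\le \mu(I-\gamma\P_{\pi^*})^{-1}D_{\pi_0}^{\pi^*}+\tfrac{\gamma}{1-\gamma}u_k\mathbf{1}_{\bS}+\tfrac{2}{\mu}\cdot\tfrac{T b^2(1-\gamma)u_k}{1-\gamma}\mathbf{1}_{\bS}+\tfrac{u_k}{4}\big(T-1+\tfrac{1}{1-\gamma}\big)\mathbf{1}_{\bS}.
\]
Now I bound each term on the right:
\begin{itemize}
\item the first term is at most $u_k/(1-\gamma)$ by hypothesis;
\item the second is at most $u_k/(1-\gamma)$;
\item the third equals $2Tb^2u_k/\mu=Tu_k/[8(\ln T+1)]\le Tu_k/8$, using $b^2=\mu/[16(\ln T+1)]$.
\end{itemize}

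\textbf{Step 3: read off both claims, which is where the constants must close.}

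For the value claim, I drop the nonnegative Bregman term and divide by $T$. This gives
\[
\|V^{\pi_T}-V^*\|_\infty\le \tfrac{2+1/4}{(1-\gamma)T}u_k+\tfrac{u_k}{8}+\tfrac{u_k}{4}.
\]
With $T\ge 18/(1-\gamma)$, the first term is at most $u_k/8$, so the total is at most $u_k/2$.

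For the Bregman claim, I drop $T(V^{\pi_T}-V^*)\ge0$ and divide by $\mu(T+2)/2$. The hazard is the $\tfrac{u_k}{4}(T-1)$ term, which after division gives about $u_k/(2\mu)$. That is of the right order, but I must check it is at most $u_{k+1}/(\mu(1-\gamma))=u_k/(2\mu(1-\gamma))$. Collecting all terms, the right-hand side is at most
\[
\tfrac{2}{\mu(T+2)}\Big[\tfrac{9u_k}{4(1-\gamma)}+\tfrac{Tu_k}{8}+\tfrac{(T-1)u_k}{4}\Big]\le \tfrac{u_k}{\mu}\Big[\tfrac{9}{2(1-\gamma)T}+\tfrac{1}{4}+\tfrac{1}{2}\Big]\le \tfrac{u_k}{\mu}\big(\tfrac14+\tfrac34\big).
\]
This is at most $u_k/(2\mu(1-\gamma))$ whenever $\gamma\ge1/2$. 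For $\gamma<1/2$ the slack is not automatic, so I would tighten the argument as follows. Instead of the crude shift by $u_k/4$ in every summand, I would keep the $(\ln T+1)$ factor in the bound $\|(I-\gamma\P_{\pi_t})^{-1}\beta_t\|_\infty\le b^2(\ln t+1)u_k/(2\mu)$. Alternatively, I would use $\Bar{V}_t-V^*\ge -(u_k/4)\mathbf{1}_{\bS}$ only through $V^{\pi_t}\ge V^*$, so that the negative part is charged only to the average of the $\beta_t$ terms.

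I expect this constant bookkeeping in the Bregman bound to be the main obstacle: the value bound has ample slack, but the divergence bound must contract by exactly a factor of $2$ relative to $u_k/(\mu(1-\gamma))$.
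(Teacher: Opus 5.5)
Your route is the paper's route. The check that the stated $m_{k,2}$ dominates the requirement of Lemma~\ref{lem:monotone_stoc2} via $u_0\le(1+\Bar{h}+\mu D_0)/(1-\gamma)$ is correct, as are the application of $(I-\gamma\P_{\pi^*})^{-1}$ to \eqref{res:lem3p3} and your value bound. Your coefficient $T$ on $V^{\pi_T}-V^*$ is the right count. The paper's $T+1$ comes from counting $T$ rather than $T-1$ terms in $\tsum_{t=1}^{T-1}$, an off-by-one that does not change the conclusion.

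Your diagnosis of the divergence bound is also correct, and the same slip is in the paper's own proof. From \eqref{eq4:prop_stoc2} one only gets
\[
\|(I-\gamma\P_{\pi^*})^{-1}D_{\pi_T}^{\pi^*}\|_\infty\le\tfrac{2}{\mu(T+2)}\big[\tfrac{9}{4(1-\gamma)}+\tfrac{3T}{8}\big]u_k .
\]
For small $\gamma$, where $T=19$, this is about $0.9\,u_k/\mu$, while the target is about $0.5\,u_k/\mu$. The paper's final sentence is therefore valid only for $\gamma$ above roughly $1/2$. As written, neither your proposal nor the paper's proof closes the second claim for all $\gamma\in(0,1)$.

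Your first suggested fix closes it with no new ingredients. Since $\beta_t$ in \eqref{def:betak_stoc2} is a constant vector, for $t\le T$
\[
(I-\gamma\P_{\pi_t})^{-1}\beta_t=\tfrac{b^2u_k}{2\mu}\tsum_{i=1}^t\tfrac{1}{i}\cdot\mathbf{1}_{\bS}\le\tfrac{b^2(\ln T+1)}{2\mu}u_k\cdot\mathbf{1}_{\bS}=\tfrac{u_k}{32}\cdot\mathbf{1}_{\bS}.
\]
Then \eqref{induc_state:stoc2}$(d)$, \eqref{eq:approxmonotonebound} and $V^{\pi_t}\ge V^*$ give $\Bar{V}_t-V^*\ge-\tfrac{u_k}{32}\mathbf{1}_{\bS}$ for $t=1,\dots,T$. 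In other words, the $u_k/4$ in \eqref{induc_state:stoc2}$(e)$ is loose by a factor of $8$.

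Redo your Step~2 with $u_k/32$ in place of $u_k/4$ and drop the nonnegative value terms. This gives
\[
\tfrac{\mu(T+2)}{2}(I-\gamma\P_{\pi^*})^{-1}D_{\pi_T}^{\pi^*}\le\Big[\tfrac{1+\gamma+1/32}{1-\gamma}+\tfrac{T}{8}+\tfrac{T-1}{32}\Big]u_k\cdot\mathbf{1}_{\bS}.
\]
The target right-hand side is $\tfrac{(T+2)u_k}{4(1-\gamma)}$. After multiplying by $1-\gamma\le 1$, it suffices that $\tfrac{65}{32}+\tfrac{5T-1}{32}\le\tfrac{T+2}{4}$, i.e.\ $T\ge 16$, which holds since $T\ge 18$. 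The value bound only improves under this change.

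With this computation written out in place of ``I would tighten'', your proof is complete and also repairs the paper's.
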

\begin{proof}
    We first know from Lemma~\ref{lem:converge_stoc2} that
    \begin{align}
        (\Bar{V}_T-V^*)+\tsum_{t=1}^{T-1}(I-\gamma\P_{\pi^*})(\Bar{V}_t-V^*)+\tfrac{\mu(T+2)}{2}D_{\pi_T}^{\pi^*}\leq\mu D_{\pi_0}^{\pi^*}+\gamma\P_{\pi^*}(\Bar{V}_0-V^*)+\tsum_{t=0}^{T-1}\tfrac{2}{\mu}\left\|\Tilde{Q}^t-\Bar{Q}^t\right\|_{\max}^2\cdot\mathbf{1}_{\bS}.
        \label{eq1:prop_stoc2}
    \end{align}
    In view of Lemma~\ref{lem:monotone_stoc2}, $\|\Tilde{Q}^t-\Bar{Q}^t\|_{\max}\leq b(1-\gamma)^{1/2}u_k^{1/2}$ and $\Bar{V}_{t+1}\geq V^{\pi_{t+1}}-u_k/4\cdot\mathbf{1}_{\bS}$ holds for $t=0,1,...,T-1$ with probability at least $1-\delta$, where $b\in(0,\sqrt{\mu/[16(\ln T+1)]}]$. Together with $\Bar{V}_0\geq\Bar{V}_1\geq\cdots\geq\Bar{V}_T$,~\eqref{eq1:prop_stoc2} can be simplified to
    \begin{align}
        &(I-\gamma\P_{\pi^*})^{-1}\big(V^{\pi_T}-\tfrac{u_k}{4}\mathbf{1}_{\bS}-V^*\big)+T\big(V^{\pi_T}-\tfrac{u_k}{4}\mathbf{1}_{\bS}-V^*\big)+\tfrac{\mu(T+2)}{2}(I-\gamma\P_{\pi^*})^{-1}D_{\pi_T}^{\pi^*} \nn\\
        &\leq (I-\gamma\P_{\pi^*})^{-1}\big[\mu D_{\pi_0}^{\pi^*}+\gamma\P_{\pi^*}(\Bar{V}_0-V^*)+T\cdot\tfrac{2}{\mu}b^2(1-\gamma)u_k\cdot\mathbf{1}_{\bS}\big].
        \label{eq2:prop_stoc2}
    \end{align}
    Since we have $\|\Bar{V}_0-V^*\|_\infty\leq u_k$, $\|(I-\gamma\P_{\pi^*})^{-1}D_{\pi_0}^{\pi^*}\|_\infty\leq u_k/[\mu(1-\gamma)]$, and $b^2\leq\mu/16$, then~\eqref{eq2:prop_stoc2} implies
    \begin{align}
        &(T+1)\big(V^{\pi_T}-V^*\big)+\tfrac{\mu(T+2)}{2}(I-\gamma\P_{\pi^*})^{-1}D_{\pi_T}^{\pi^*}-\big(\tfrac{1}{1-\gamma}+T\big)\cdot\tfrac{u_k}{4}\cdot\mathbf{1}_{\bS} \nn\\
        &\leq\tfrac{u_k}{1-\gamma}\cdot\mathbf{1}_{\bS}+\tfrac{u_k}{1-\gamma}\cdot\mathbf{1}_{\bS}+\tfrac{Tu_k}{8}\cdot\mathbf{1}_{\bS}\leq\big(\tfrac{2}{1-\gamma}+\tfrac{T}{8}\big)u_k\cdot\mathbf{1}_{\bS},
        \label{eq3:prop_stoc2}
    \end{align}
    and hence
    \begin{align}
        (T+1)\left(V^{\pi_T}-V^*\right)+\tfrac{\mu(T+2)}{2}(I-\gamma\P_{\pi^*})^{-1}D_{\pi_T}^{\pi^*}\leq\left[\tfrac{9}{4}(1-\gamma)^{-1}+\tfrac{3T}{8}\right]u_k\cdot\mathbf{1}_{\bS}.
        \label{eq4:prop_stoc2}
    \end{align}
    Finally, by~\eqref{eq4:prop_stoc2} and the choice of $T$ in~\eqref{param:prop_stoc2}, we obtain~\eqref{res:prop_stoc2} and conclude the proof.
\end{proof}
\vgap

Finally, Theorem~\ref{thm:stoc2} follows by applying Proposition~\ref{prop:stoc2} recursively over all epochs.

\paragraph{Proof of Theorem~\ref{thm:stoc2}}
    By using induction that follows from Proposition~\ref{prop:stoc2}, it is straightforward to obtain that for any $k\in\{0,1,...,K\}$,
    \begin{align}
        \big\|V^{\hat{\pi}_k}-V^*\big\|_\infty\leq u_k \quad\text{and}\quad \big\|(I-\gamma\P_{\pi^*})^{-1}D_{\hat{\pi}_k}^{\pi^*}\big\|_\infty\leq\tfrac{u_k}{\mu(1-\gamma)}
        \label{eq1:thm_stoc2}
    \end{align}
    hold with probability at least $1-(k/K)\delta$. In view of the choice of $K$ in~\eqref{param:prop_stoc2}, we have $\|V^{\hat{\pi}_K}-V^*\|_\infty\leq u_K\leq\epsilon$ and $\|D_{\hat{\pi}_k}^{\pi^*}\|_\infty\leq\|(I-\gamma\P_{\pi^*})^{-1}D_{\hat{\pi}_k}^{\pi^*}\|_\infty\leq u_K/[\mu(1-\gamma)]\leq\epsilon/[\mu(1-\gamma)]$.
\vgap
\section{Conclusion}
\label{section:conclusion}

In this paper, we propose a new value mirror descent method that converges linearly to an $\epsilon$-optimal policy for DMDPs. We then develop a stochastic value mirror descent method for solving RL problems under a generative sampling model. For RL problems with general convex regularizers, we show that with high probability, our method computes an $\epsilon$-optimal policy using $\Tilde{\cO}(|\bS||\bA|(1-\gamma)^{-3}\epsilon^{-2})$ samples, which is nearly optimal up to logarithmic factors. Moreover, for strongly convex regularized RL problems, our method achieves a sample complexity of $\Tilde{\cO}(|\bS||\bA|(1-\gamma)^{-5}\epsilon^{-1})$. To the best of our knowledge, this is the first value iteration-type method to attain an $\cO(\epsilon^{-1})$ dependence on $\epsilon$. Furthermore, our method guarantees a bounded or convergent Bregman divergence between the output approximate optimal policy and the optimal policy. This property enables the policy produced by our offline pre-training procedure to serve as a stable starting point for continual online adaptation and on-policy learning.
\section*{Appendix A: Deferred Proofs}

\paragraph{Proof of Lemma~\ref{lem:total_variance}}

Although the Bellman-like equation established in this proof is similar to those in~\cite{gheshlaghi2013minimax,munos1999influence,sidford2018near,sobel1982variance}, the regularized term $h(\cdot)$ is incorporated in our case, and our policy $\pi$ is chosen as a randomized policy in general, while those works only discuss deterministic policies. Moreover, unlike~\cite{gheshlaghi2013minimax,sidford2018near}, we study the variance of the discounted cumulative costs for every state, rather than for every state-action pair.

For any policy $\pi$ and $s\in\bS$, we first define $\Sigma^{\pi}\in\R^{|\bS|}$ with each entry
\begin{align}
    \Sigma^{\pi}(s):=\E_{\pi}\Big[\big[\tsum_{t=0}^{\infty}\gamma^t\left(c(s_t,a_t)+h(\pi(\cdot|s_t))\right)-V^{\pi}(s)\big]^2\mid s_0=s\Big],
    \label{def:Sigmapi}
\end{align}
where $a_t\sim\pi(\cdot|s_t)$ for all $t=0,1,2,...$. Notice that $\Sigma^{\pi}(s)$ represents the variance of the discounted cumulative costs over infinite steps when the initial $s_0=s$ is given, and the following relation holds:
\begin{align}
    \Sigma^{\pi}(s)=\E_{\pi}\Big[\big[\tsum_{t=0}^{\infty}\gamma^t(c(s_t,a_t)+h(\pi(\cdot|s_t)))\big]^2\mid s_0=s\Big]-(V^{\pi}(s))^2.
    \label{eq1:lem_totalvar}
\end{align}

For any policy $\pi$, let us now prove the Bellman equation below for the variance:
\begin{align}
    \Sigma^{\pi}=\gamma^2\sigma^{\pi}_{V^{\pi}}+\gamma^2\P_{\pi}\Sigma^{\pi}.
    \label{Bellman_eq_var}
\end{align}
To this end, we first simplify the first term on the right-hand side of~\eqref{def:Sigmapi}. Specifically, we have
\begin{align}
    &\E_{\pi}\Big[\big[\tsum_{t=0}^{\infty}\gamma^t(c(s_t,a_t)+h(\pi(\cdot|s_t)))\big]^2\mid s_0=s\Big] \nn\\
    &=\tsum_{a\in\bA,s'\in\bS}\pi(a|s)\P_{s,a}(s')\E_{\pi}\Big[\big[c(s,a)+h(\pi(\cdot|s))+\gamma\tsum_{t=1}^{\infty}\gamma^{t-1}(c(s_t,a_t)+h(\pi(\cdot|s_t)))\big]^2\mid s_1=s'\Big] \nn\\
    &=(c(s,a)+h(\pi(\cdot|s)))^2+\gamma^2\tsum_{a\in\bA,s'\in\bS}\pi(a|s)\P_{s,a}(s')\E_{\pi}\Big[\big[\tsum_{t=1}^{\infty}\gamma^{t-1}(c(s_t,a_t)+h(\pi(\cdot|s_t)))\big]^2\mid s_1=s'\Big] \nn\\
    &\quad +2\gamma (c(s,a)+h(\pi(\cdot|s)))\tsum_{a\in\bA,s'\in\bS}\pi(a|s)\P_{s,a}(s')\E_{\pi}\big[\tsum_{t=1}^{\infty}\gamma^{t-1}(c(s_t,a_t)+h(\pi(\cdot|s_t)))\mid s_1=s'\big] \nn\\
    &=\underbrace{(c(s,a)+h(\pi(\cdot|s)))^2}_{A}+\gamma^2\tsum_{a\in\bA,s'\in\bS}\pi(a|s)\P_{s,a}(s')\big[\Sigma^{\pi}(s')+\big(V^{\pi}(s')\big)^2\big] \nn\\
    &\quad +\underbrace{2\gamma(c(s,a)+h(\pi(\cdot|s)))\tsum_{a\in\bA,s'\in\bS}\pi(a|s)\P_{s,a}(s')V^{\pi}(s')}_{B},
    \label{eq2:lem_totalvar}
\end{align}
where the last equality utilizes~\eqref{def:Sigmapi}. In~\eqref{eq2:lem_totalvar}, since we know that
\begin{align}
    A+B&=\big(c(s,a)+h(\pi(\cdot|s))+\gamma\tsum_{a\in\bA,s'\in\bS}\pi(a|s)\P_{s,a}(s')V^{\pi}(s')\big)^2-\big(\gamma\tsum_{a\in\bA,s'\in\bS}\pi(a|s)\P_{s,a}(s')V^{\pi}(s')\big)^2 \nn\\
    &=\big(V^{\pi}(s)\big)^2-\gamma^2\big(\tsum_{s'\in\bS}\P_{\pi}(s,s')V^{\pi}(s')\big)^2,
    \label{eq3:lem_totalvar}
\end{align}
then incorporating~\eqref{eq3:lem_totalvar} into~\eqref{eq2:lem_totalvar} yields
\begin{align}
    &\E_{\pi}\Big[\big[\tsum_{t=0}^{\infty}\gamma^t(c(s_t,a_t)+h(\pi(\cdot|s_t)))\big]^2\mid s_0=s\Big] \nn\\
    &=\big(V^{\pi}(s)\big)^2+\gamma^2\tsum_{s'\in\bS}\P_{\pi}(s,s')\Sigma^{\pi}(s')+\Big[\gamma^2\tsum_{s'\in\bS}\P_{\pi}(s,s')\big(V^{\pi}(s')\big)^2-\gamma^2\big(\tsum_{s'\in\bS}\P_{\pi}(s,s')V^{\pi}(s')\big)^2\Big] \nn\\
    &=\big(V^{\pi}(s)\big)^2+\gamma^2\P_{\pi}(s,\cdot)\Sigma^{\pi}+\gamma^2\sigma^{\pi}_{V^{\pi}}.
    \label{eq4:lem_totalvar}
\end{align}
Therefore,~\eqref{Bellman_eq_var} holds by combining~\eqref{def:Sigmapi} and~\eqref{eq4:lem_totalvar}. It then follows from~\eqref{Bellman_eq_var} that
\begin{align}
    \gamma^2(I-\gamma^2\P_{\pi})^{-1}\sigma^{\pi}_{V^{\pi}}=\Sigma^{\pi}\leq\tfrac{(1+\Bar{h})^2}{(1-\gamma)^2}\cdot\mathbf{1}_{\bS},
    \label{eq5:lem_totalvar}
\end{align}
where the inequality uses the definition of $\Sigma^{\pi}$ in~\eqref{def:Sigmapi}. Finally, we directly apply~\cite[Lemma C.3]{sidford2018near} to~\eqref{eq5:lem_totalvar} and obtain
\begin{align}
    \big\|(I-\gamma\P_{\pi})^{-1}\sqrt{\sigma^{\pi}_{V^{\pi}}}\big\|_\infty^2\leq\tfrac{1+\gamma}{1-\gamma}\big\|(I-\gamma^2\P_{\pi})^{-1}\sigma^{\pi}_{V^{\pi}}\big\|_\infty\leq\tfrac{(1+\Bar{h})^2(1+\gamma)}{\gamma^2(1-\gamma)^3}.
    \label{eq6:lem_totalvar}
\end{align}
This completes the proof.
\vgap

\paragraph{Proof of Lemma~\ref{lem:error_pv_stoc1}$(b)$}

First, for any epoch $k\in\{1,2,...,K-1\}$ of Algorithm~\ref{algo:VMD2}, let us denote $m=m_{k,1},\pi=\pi_0,V=V^{\pi_0},\Tilde{V}=V_0$ and $\Tilde{\P}=\Tilde{P}^0$ for notational simplicity. According to the definitions $V=c_{\pi}+\gamma\P_{\pi}V+h(\pi)$ and $\Tilde{V}=c_{\pi}+\gamma\Tilde{\P}_{\pi}\Tilde{V}+h(\pi)$, we can obtain
\begin{align}
    \Tilde{V}-V=\gamma(I-\gamma\P_{\pi})^{-1}(\Tilde{\P}_{\pi}-\P_{\pi})\Tilde{V}.
    \label{eq1:appdx_error}
\end{align}
Notice that the dependence between $\Tilde{\P}$ and $\Tilde{V}$ prevents us from directly applying Lemma~\ref{lem:concentration_bounds}$(2)$ to bound the error term $(\Tilde{\P}_{\pi}-\P_{\pi})\Tilde{V}$ on the right-hand side of~\eqref{eq1:appdx_error}. To handle this difficulty, we adopt and develop the result in~\cite{pananjady2020instance}. Specifically, we separate~\eqref{eq1:appdx_error} as
\begin{align}
    \Tilde{V}-V=\gamma(I-\gamma\P_{\pi})^{-1}\big[(\Tilde{\P}_{\pi}-\P_{\pi})V+(\Tilde{\P}_{\pi}-\P_{\pi})(\Tilde{V}-V)\big],
    \label{eq2:appdx_error}
\end{align}
where $(\Tilde{\P}_{\pi}-\P_{\pi})V$ can be bounded using Lemma~\ref{lem:concentration_bounds}$(2)$, and we focus on the term $(\Tilde{\P}_{\pi}-\P_{\pi})(\Tilde{V}-V)$. To properly decouple the dependence between $\Tilde{\P}$ and $\Tilde{V}$, for every $(s,a)\in\bS\times\bA$, we define a perturbed probability transition kernel $\widehat{\P}^{(s,a)}$ by
\begin{align}
    \widehat{\P}^{(s,a)}_{s',a'}:=\begin{cases}
        \Tilde{\P}_{s',a'} &\text{if } (s',a')\neq (s,a); \\
        \P_{s',a'} &\text{otherwise}
    \end{cases}
    \label{def:perturb_transker}
\end{align}
for all $(s,a)\in\bS\times\bA$. Accordingly, we define
\begin{align}
    \widehat{V}^{(s,a)}:=\big(I-\gamma\widehat{\P}^{(s,a)}_{\pi}\big)^{-1}(c_{\pi}+h(\pi))
    \label{def:perturb_v}
\end{align}
so that $\widehat{V}^{(s,a)}=c_{\pi}+\gamma\widehat{\P}^{(s,a)}_{\pi}\widehat{V}^{(s,a)}+h(\pi)$ is satisfied, where $\widehat{\P}^{(s,a)}_{\pi}\in\R^{|\bS|\times|\bS|}$ is defined by
\begin{align}
    \widehat{\P}_{\pi}^{(s,a)}(s_1,s_2):=\tsum_{a'\in\bA}\pi(a'|s_1)\widehat{\P}^{(s,a)}(s_2|s_1,a').
    \label{def:perturb_transker_pi}
\end{align}
for all $s_1,s_2\in\bS$. Next, we further decouple the term $(\Tilde{\P}_{\pi}-\P_{\pi})(\Tilde{V}-V)$ in~\eqref{eq2:appdx_error}. Specifically, for any $(s,a)\in\bS\times\bA$, we have
\begin{align}
    (\Tilde{\P}_{s,a}-\P_{s,a})^{\tr}(\Tilde{V}-V)=\underbrace{(\Tilde{\P}_{s,a}-\P_{s,a})^{\tr}\big(\widehat{V}^{(s,a)}-V\big)}_{A}+\underbrace{(\Tilde{\P}_{s,a}-\P_{s,a})^{\tr}\big(\Tilde{V}-\widehat{V}^{(s,a)}\big)}_{B}.
    \label{eq3:appdx_error}
\end{align}
Since $\Tilde{\P}_{s,a}$ are independently computed for all $(s,a)\in\bS\times\bA$, then in view of~\eqref{def:perturb_transker} and~\eqref{def:perturb_transker_pi}, $\widehat{\P}^{(s,a)}_{\pi}$ is independent of $\Tilde{\P}_{s,a}$. As a consequence, $\widehat{V}^{(s,a)}$ computed from $\widehat{\P}^{(s,a)}_{\pi}$ (see~\eqref{def:perturb_v}) is also independent of $\Tilde{\P}_{s,a}$, and hence for the term $A$ in~\eqref{eq3:appdx_error}, $\Tilde{\P}_{s,a}-\P_{s,a}$ and $\widehat{V}^{(s,a)}-V$ are independent. Moreover, for the term $B$ in~\eqref{eq3:appdx_error}, $\widehat{V}^{(s,a)}$ can be regarded as a perturbed policy value vector that is close to $\Tilde{V}$.

Below we divide the rest of this proof into three parts. In the first two parts, we provide high probability bounds for $A$ and $B$ in~\eqref{eq3:appdx_error}, respectively. Then, utilizing these two bounds, we complete the proof of Lemma~\ref{lem:error_pv_stoc1}$(2)$.
\vgap

\paragraph{Bound for $A$ in~\eqref{eq3:appdx_error}} Let us first consider the term $A$. Due to the independence between $\Tilde{\P}_{s,a}-\P_{s,a}$ and $\widehat{V}^{(s,a)}-V$, we obtain from Lemma~\ref{lem:concentration_bounds}$(a)$ that
\begin{align}
    |A|\leq\sqrt{2}\big\|\widehat{V}^{(s,a)}-V\big\|_\infty m^{-\tfrac{1}{2}}\sqrt{\ln(6|\bS||\bA|\delta^{-1})}\leq\tfrac{1}{4}(1-\gamma)\big\|\widehat{V}^{(s,a)}-V\big\|_\infty
    \label{eq4:appdx_error}
\end{align}
holds with probability at least $1-\delta/(3|\bS||\bA|)$, where the second inequality utilizes $m\geq 32(1-\gamma)^{-2}\sqrt{\ln(6|\bS||\bA|\delta^{-1})}$ by~\eqref{m_error_pvki_stoc1}. In order to further derive an upper bound for $|A|$ in terms of $\|\Tilde{V}-V\|_\infty$, it follows from~\eqref{eq4:appdx_error} that
\begin{align}
    \big|(\Tilde{\P}_{s,a}-\P_{s,a})^{\tr}\widehat{V}^{(s,a)}\big|\leq\tfrac{1}{4}(1-\gamma)\big\|\widehat{V}^{(s,a)}-V\big\|_\infty+\big|(\Tilde{\P}_{s,a}-\P_{s,a})^{\tr}V\big|.
    \label{eq5:appdx_error}
\end{align}
Moreover, by the equations $\Tilde{V}=c_{\pi}+\gamma\Tilde{\P}_{\pi}\Tilde{V}+h(\pi)$ and $\widehat{V}^{(s,a)}=c_{\pi}+\gamma\widehat{\P}^{(s,a)}_{\pi}\widehat{V}^{(s,a)}+h(\pi)$, we have
\begin{align}
    \Tilde{V}-\widehat{V}^{(s,a)}&=\gamma(I-\gamma\Tilde{\P}_{\pi})^{-1}(\Tilde{\P}_{\pi}-\widehat{\P}^{(s,a)}_{\pi})\widehat{V}^{(s,a)}=\gamma(I-\gamma\Tilde{\P}_{\pi})^{-1}\big[\pi(a|s)e_s(\Tilde{\P}_{s,a}-\P_{s,a})^{\tr}\big]\widehat{V}^{(s,a)} \nn\\
    &=\gamma\pi(a|s)\big[(\Tilde{\P}_{s,a}-\P_{s,a})^{\tr}\widehat{V}^{(s,a)}\big](I-\gamma\Tilde{\P}_{\pi})^{-1}e_s,
    \label{eq6:appdx_error}
\end{align}
where the second equality holds due to the perturbation of $\widehat{\P}^{(s,a)}_{\pi}$ from $\Tilde{\P}_{\pi}$, and $e_s\in\R^{|\bS|}$ has value $1$ at entry $s$ and $0$ elsewhere. By~\eqref{eq5:appdx_error} and~\eqref{eq6:appdx_error}, we have
\begin{align}
    \big\|\Tilde{V}-\widehat{V}^{(s,a)}\big\|_\infty\leq(1-\gamma)^{-1}\big|(\Tilde{\P}_{s,a}-\P_{s,a})^{\tr}\widehat{V}^{(s,a)}\big|\leq\tfrac{1}{4}\big\|\widehat{V}^{(s,a)}-V\big\|_\infty+(1-\gamma)^{-1}\big|(\Tilde{\P}_{s,a}-\P_{s,a})^{\tr}V\big|.
    \label{eq7:appdx_error}
\end{align}
It then follows from~\eqref{eq7:appdx_error} that
\begin{align}
    \big\|\widehat{V}^{(s,a)}-V\big\|_\infty\leq\big\|\widehat{V}^{(s,a)}-\Tilde{V}\big\|_\infty+\|\Tilde{V}-V\|_\infty\leq\tfrac{1}{4}\big\|\widehat{V}^{(s,a)}-V\big\|_\infty+(1-\gamma)^{-1}\big|(\Tilde{\P}_{s,a}-\P_{s,a})^{\tr}V\big|+\|\Tilde{V}-V\|_\infty,
    \label{eq8:appdx_error}
\end{align}
which can be simplified as
\begin{align}
    \big\|\widehat{V}^{(s,a)}-V\big\|_\infty\leq\tfrac{4}{3}\big[\|\Tilde{V}-V\|_\infty+(1-\gamma)^{-1}\big|(\Tilde{\P}_{s,a}-\P_{s,a})^{\tr}V\big|\big].
    \label{eq9:appdx_error}
\end{align}
Combining~\eqref{eq4:appdx_error} with~\eqref{eq9:appdx_error}, we obtain an upper bound for the term $A$ in~\eqref{eq3:appdx_error} as
\begin{align}
    |A|\leq\tfrac{1}{3}\big[(1-\gamma)\|\Tilde{V}-V\|_\infty+\big|(\Tilde{\P}_{s,a}-\P_{s,a})^{\tr}V\big|\big].
    \label{eq10:appdx_error}
\end{align}
\vgap

\paragraph{Bound for $B$ in~\eqref{eq3:appdx_error}} Next, we discuss an upper bound for the term $B$ in~\eqref{eq3:appdx_error}. We first express the difference between $(I-\gamma\Tilde{\P}_{\pi})^{-1}$ and $(I-\gamma\widehat{\P}_{\pi}^{(s,a)})^{-1}$ using the Woodbury matrix identity, i.e., we define
\begin{align}
    M:=(I-\gamma\Tilde{\P}_{\pi})^{-1}-\big(I-\gamma\widehat{\P}_{\pi}^{(s,a)}\big)^{-1}=\gamma\pi(a|s)\tfrac{(I-\gamma\widehat{\P}_{\pi}^{(s,a)})^{-1}e_s(\Tilde{\P}_{s,a}-\P_{s,a})^{\tr}(I-\gamma\widehat{\P}_{\pi}^{(s,a)})^{-1}}{1-\gamma\pi(a|s)(\Tilde{\P}_{s,a}-\P_{s,a})^{\tr}(I-\gamma\widehat{\P}_{\pi}^{(s,a)})^{-1}e_s},
    \label{eq11:appdx_error}
\end{align}
which utilizes the fact $I-\gamma\Tilde{\P}_{\pi}=(I-\gamma\widehat{\P}_{\pi}^{(s,a)})-\pi(a|s)\gamma e_s(\Tilde{\P}_{s,a}-\P_{s,a})^{\tr}$. Then, in view of~\eqref{eq6:appdx_error}, we express the term $B$ in~\eqref{eq3:appdx_error} as
\begin{align}
    B&=(\Tilde{\P}_{s,a}-\P_{s,a})^{\tr}\big(\Tilde{V}-\widehat{V}^{(s,a)}\big)=\gamma\pi(a|s)\big[(\Tilde{\P}_{s,a}-\P_{s,a})^{\tr}\widehat{V}^{(s,a)}\big](\Tilde{\P}_{s,a}-\P_{s,a})^{\tr}(I-\gamma\Tilde{\P}_{\pi})^{-1}e_s \nn\\
    &=\gamma\pi(a|s)\big[(\Tilde{\P}_{s,a}-\P_{s,a})^{\tr}\widehat{V}^{(s,a)}\big](\Tilde{\P}_{s,a}-\P_{s,a})^{\tr}\big[\big(I-\gamma\widehat{\P}_{\pi}^{(s,a)}\big)^{-1}+M\big]e_s \nn\\
    &=\gamma\pi(a|s)\big[(\Tilde{\P}_{s,a}-\P_{s,a})^{\tr}\widehat{V}^{(s,a)}\big](\Tilde{\P}_{s,a}-\P_{s,a})^{\tr}\big(I-\gamma\widehat{\P}_{\pi}^{(s,a)}\big)^{-1}e_s \nn\\
    &\quad +(\gamma\pi(a|s))^2\big[(\Tilde{\P}_{s,a}-\P_{s,a})^{\tr}\widehat{V}^{(s,a)}\big]\tfrac{(\Tilde{\P}_{s,a}-\P_{s,a})^{\tr}(I-\gamma\widehat{\P}_{\pi}^{(s,a)})^{-1}e_s(\Tilde{\P}_{s,a}-\P_{s,a})^{\tr}(I-\gamma\widehat{\P}_{\pi}^{(s,a)})^{-1}e_s}{1-\gamma\pi(a|s)(\Tilde{\P}_{s,a}-\P_{s,a})^{\tr}(I-\gamma\widehat{\P}_{\pi}^{(s,a)})^{-1}e_s},
    \label{eq12:appdx_error}
\end{align}
where the second equality follows from~\eqref{eq6:appdx_error}, and the third and fourth equalities use~\eqref{eq11:appdx_error}. Let $x=(\Tilde{\P}_{s,a}-\P_{s,a})^{\tr}(I-\gamma\widehat{\P}_{\pi}^{(s,a)})^{-1}e_s$. Since $\widehat{\P}_{\pi}^{(s,a)}$ is independent of $\Tilde{\P}_{s,a}$, Lemma~\ref{lem:concentration_bounds}$(a)$ guarantees that
\begin{align}
    |x|\leq\sqrt{2}m^{-\tfrac{1}{2}}\sqrt{\ln(6|\bS||\bA|\delta^{-1})}\big\|\big(I-\gamma\widehat{\P}_{\pi}^{(s,a)}\big)^{-1}e_s\big\|_\infty\leq\tfrac{1}{4}
    \label{eq13:appdx_error}
\end{align}
is satisfied with probability at least $1-\delta/(3|\bS||\bA|)$, where the second inequality uses $m\geq 32(1-\gamma)^{-2}\sqrt{\ln(6|\bS||\bA|\delta^{-1})}$ by~\eqref{m_error_pvki_stoc1}. Then, it follows from~\eqref{eq12:appdx_error} that
\begin{align}
    |B|&=\Big|\big[\gamma\pi(a|s)x+\left(\gamma\pi(a|s)\right)^2\tfrac{x^2}{1-\gamma\pi(a|s)x}\big]\cdot\big[(\Tilde{\P}_{s,a}-\P_{s,a})^{\tr}\widehat{V}^{(s,a)}\big]\Big|\leq\big|(\Tilde{\P}_{s,a}-\P_{s,a})^{\tr}\widehat{V}^{(s,a)}\big|\cdot\big|\tfrac{\gamma\pi(a|s)x}{1-\gamma\pi(a|s)x}\big| \nn\\
    &\leq\tfrac{1/4}{1-1/4}\big|(\Tilde{\P}_{s,a}-\P_{s,a})^{\tr}\widehat{V}^{(s,a)}\big|=\tfrac{1}{3}\big|(\Tilde{\P}_{s,a}-\P_{s,a})^{\tr}\widehat{V}^{(s,a)}\big|\leq\tfrac{1}{3}\big(|A|+\big|(\Tilde{\P}_{s,a}-\P_{s,a})^{\tr}V\big|\big) \nn\\
    &\leq\tfrac{1}{9}(1-\gamma)\|\Tilde{V}-V\|_\infty+\tfrac{4}{9}\big|(\Tilde{\P}_{s,a}-\P_{s,a})^{\tr}V\big|,
    \label{eq14:appdx_error}
\end{align}
where the second inequality applies~\eqref{eq13:appdx_error}, the third inequality uses the definition of $A$ in~\eqref{eq3:appdx_error}, and the last inequality follows from~\eqref{eq10:appdx_error}.
\vgap

\paragraph{Proof for~\eqref{error_pvki_stoc1} in Lemma~\ref{lem:error_pv_stoc1}} Finally, in view of~\eqref{eq3:appdx_error}, and combining~\eqref{eq10:appdx_error} and~\eqref{eq14:appdx_error}, we obtain
\begin{align}
    \big|(\Tilde{\P}_{s,a}-\P_{s,a})^{\tr}(\Tilde{V}-V)\big|\leq |A|+|B|\leq\tfrac{4}{9}(1-\gamma)\left\|\Tilde{V}-V\right\|_\infty+\tfrac{7}{9}\big|(\Tilde{\P}_{s,a}-\P_{s,a})^{\tr}V\big|.
    \label{eq15:appdx_error}
\end{align}
Note that by taking a union bound for the events~\eqref{eq4:appdx_error} and~\eqref{eq13:appdx_error} over all the state-action pairs $(s,a)\in\bS\times\bA$,~\eqref{eq15:appdx_error} holds for all $(s,a)\in\bS\times\bA$ with probability at least $1-(2/3)\delta$. Then~\eqref{eq2:appdx_error} together with~\eqref{eq15:appdx_error} yields
\begin{align}
    \|\Tilde{V}-V\|_\infty&\leq \big\|(I-\gamma\P_{\pi})^{-1}\big[(\Tilde{\P}_{\pi}-\P_{\pi})V+(\Tilde{\P}_{\pi}-\P_{\pi})(\Tilde{V}-V)\big]\big\|_\infty \nn\\
    &\leq\tfrac{4}{9}\|\Tilde{V}-V\|_\infty+\tfrac{16}{9}\big\|(I-\gamma\P_{\pi})^{-1}\big\langle|(\Tilde{\P}-\P)V|,\pi\big\rangle\big\|_\infty,
    \label{eq16:appdx_error}
\end{align}
where $|(\Tilde{\P}-\P)V|$ denotes element-wise absolute value of $(\Tilde{\P}-\P)V$. Hence, it follows from~\eqref{eq16:appdx_error} that
\begin{align}
    &\left\|\Tilde{V}-V\right\|_\infty\leq \tfrac{16}{5}\gamma\big\|(I-\gamma\P_{\pi})^{-1}\big\langle|(\Tilde{\P}-\P)V|,\pi\big\rangle\big\|_\infty \nn\\
    &\leq\tfrac{16}{5}\gamma\Big\|(I-\gamma\P_{\pi})^{-1}\Big(\big\langle\sqrt{2}m^{-1/2}\sqrt{\ln(6|\bS||\bA|\delta^{-1})}\sqrt{\sigma_V},\pi\big\rangle+\tfrac{2}{3}\|V\|_\infty m^{-1}\ln(6|\bS||\bA|\delta^{-1})\cdot\mathbf{1}_{\bS}\Big)\Big\|_\infty \nn\\
    &\leq\tfrac{16\sqrt{2}}{5}\gamma m^{-\tfrac{1}{2}}\sqrt{\ln(6|\bS||\bA|\delta^{-1})}\big\|(I-\gamma\P_{\pi})^{-1}\left\langle\sqrt{\sigma_V},\pi\right\rangle\big\|_\infty+\tfrac{2}{3}(1-\gamma)^{-1}\|V\|_\infty m^{-1}\ln(6|\bS||\bA|\delta^{-1}) \nn\\
    &\leq\tfrac{16\sqrt{2}}{5}\gamma\sqrt{\ln(6|\bS||\bA|\delta^{-1})}\times\tfrac{\sqrt{2}(1+\Bar{h})}{\gamma(1-\gamma)^{3/2}}\times m^{-\tfrac{1}{2}}+\tfrac{2}{3}(1-\gamma)^{-1}\ln(6|\bS||\bA|\delta^{-1})\times\tfrac{1+\Bar{h}}{1-\gamma}\times m^{-1}\leq\tfrac{1}{2}u+\tfrac{1}{2}u=u
    \label{eq17:appdx_error}
\end{align}
holds with probability at least $1-\delta$, where the second inequality applies Lemma~\ref{lem:concentration_bounds}$(b)$ and takes a union bound over the probability $1-\delta/(3|\bS||\bA|)$ for all $(s,a)\in\bS\times\bA$, as well as the $1-(2/3)\delta$ probability of~\eqref{eq16:appdx_error}, the fourth inequality utilizes Lemma~\ref{lem:total_variance},~\eqref{law_of_total_var} and $\|V\|_\infty\leq(1+\Bar{h})/(1-\gamma)$, and the last inequality follows from $m\geq 200(1+\Bar{h})^2(1-\gamma)^{-3}u^{-2}\ln(6|\bS||\bA|\delta^{-1})$ by~\eqref{m_error_pvki_stoc1} and $u\leq (1+\Bar{h})/(1-\gamma)$. We then conclude from~\eqref{eq15:appdx_error} and~\eqref{eq17:appdx_error} that for any $(s,a)\in\bS\times\bA$,
\begin{align}
    &\big|(\Tilde{\P}_{s,a}-\P_{s,a})^{\tr}\Tilde{V}\big|\leq\big|(\Tilde{\P}_{s,a}-\P_{s,a})^{\tr}(\Tilde{V}-V)\big|+\big|(\Tilde{\P}_{s,a}-\P_{s,a})^{\tr}V\big|\leq\tfrac{4}{9}(1-\gamma)\|\Tilde{V}-V\|_\infty+\tfrac{16}{9}\big|(\Tilde{\P}_{s,a}-\P_{s,a})^{\tr}V\big| \nn\\
    &\leq\tfrac{4}{9}(1-\gamma)u+\tfrac{16}{9}\big[\sqrt{2}m^{-1/2}\sqrt{\ln(6|\bS||\bA|\delta^{-1})}\sqrt{\sigma_V(s,a)}+\tfrac{2}{3}\|V\|_\infty m^{-1}\ln(6|\bS||\bA|\delta^{-1})\big] \nn\\
    &\leq\tfrac{4}{9}(1-\gamma)u+\tfrac{16}{9}\big[\tfrac{1}{10(1+\Bar{h})}(1-\gamma)^{\tfrac{3}{2}}u\sqrt{\sigma_V(s,a)}+\tfrac{1}{300}(1-\gamma)u\big]\leq\tfrac{1}{2}\big[\tfrac{(1-\gamma)^{1/2}}{1+\Bar{h}}\sqrt{\sigma_V(s,a)}+1\big](1-\gamma)u,
    \label{eq18:appdx_error}
\end{align}
where the second inequality follows from~\eqref{eq15:appdx_error}, the third inequality utilizes~\eqref{eq17:appdx_error} and the error bound used in the second inequality of~\eqref{eq17:appdx_error} obtained from Lemma~\ref{lem:concentration_bounds}$(b)$, and the fourth inequality applies $m\geq 200(1+\Bar{h})^2(1-\gamma)^{-3}u^{-2}\ln(6|\bS||\bA|\delta^{-1})$ by~\eqref{m_error_pvki_stoc1} and $\|V\|_\infty\leq(1+\Bar{h})/(1-\gamma)$.

We conclude this proof of Lemma~\ref{lem:error_pv_stoc1}$(b)$ by combining~\eqref{eq17:appdx_error} and~\eqref{eq18:appdx_error}.

\bibliographystyle{plain}
\bibliography{bibliography}

\end{document}